\newcommand{\s}{\sigma}
\newcommand{\SL}{{\mathcal{L}}}
\newcommand{\Z}{\mathbb{Z}}
\newcommand{\C}{\mathbb{C}}
\newcommand{\N}{\mathbb{N}}
\newcommand{\R}{\mathbb{R}}
\newcommand{\CP}{\mathbb{CP}}
\newcommand{\Diff}{\operatorname{Diff}}
\newtheorem{proposition}{Proposition}[section]
\newtheorem{theorem}[proposition]{Theorem}
\newtheorem{definition}[proposition]{Definition}
\newtheorem{lemma}[proposition]{Lemma}
\newtheorem{conjecture}[proposition]{Conjecture}
\newtheorem{corollary}[proposition]{Corollary}
\newtheorem{remark}[proposition]{Remark}
\begin{document}

\title{Geometric decompositions of almost contact manifolds}

\subjclass{Primary: 53D10. Secondary: 53D15, 57R17.}
\date{March, 2012}

\keywords{almost contact structures, Lefschetz pencils, open books.}
\thanks{The author was supported by the Spanish National Research Project MTM2010-17389 and the ESF Network Contact and Symplectic Topology.}


\author{Francisco Presas}
\address{Instituto de Ciencias Matem\'aticas CSIC--UAM--UC3M--UCM,
C. Nicol\'as Cabrera, 13--15, 28049, Madrid, Spain}
\email{fpresas@icmat.es}


\renewcommand{\theenumi}{\roman{enumi}}

\begin{abstract}
These notes are intended to be an introduction to the use of approximately holomorphic techniques in almost contact and contact geometry. We develop the setup of the approximately holomorphic geometry. Once done, we sketch the existence of the two main geometric decompositions available for an almost contact or contact manifold: open books and Lefschetz pencils. The use of the two decompositions for the problem of existence of contact structures is mentioned.
\end{abstract}

\maketitle
\tableofcontents

\section{Introduction} \label{sect:introduction}
Projective algebraic geometry is a field in which meaningful classification and existence questions of manifolds have been answered. Complete theories have been developed in the last two centuries: from the classification of algebraic curves already completed by Riemann, the study and classification of surfaces by the Italian school at the beginning of the 20th century, to the more recent high--dimensional analogues studied by means of Mori theory. There are two central ingredients in these theories:
\begin{itemize}
\item[-] The existence of algebraic curves in abundance in a projective variety.
\item[-] The theory of divisors: the algebraic understanding of the codimension one subvarieties of a projective variety.
\end{itemize}
Symplectic manifolds can be thought as topological generalizations of the projective varieties. In the projective setting the essential geometric object from which the theory is developed is the hyperplane divisor. In symplectic geometry the symplectic form is the topological analogue of this divisor: in a projective variety the Poincar\'e dual of the hyperplane section is the induced Fubini--Study symplectic form. Therefore, for a while, it was thought that the classification of symplectic manifolds could be achieved by the same methods as in the projective case. For that to work a correct generalization of the concept of algebraic curve and divisor had to be provided. The algebraic curves concept was generalized by the notion of pseudo--holomorphic curves introduced by Gromov \cite{Gr85} and has been shown to be central in the development of the symplectic topology. In real dimension $4$, a divisor coincides with an algebraic curve and this has been enough to push a meaningful theory in such a situation. There was left to procure an analogue of the Riemann--Roch theorem providing the existence of algebraic curves when topologically expected. In the nineties, C. Taubes showed how to handle this by introducing a relation with the Seiberg--Witten invariants \cite{Ta99}. From that point onwards a partial classification of $4$--dimensional symplectic manifolds has been achieved, e.g. see \cite{LM96}.\\

In higher dimensions a correct theory of divisors is lacking and probably it is not reasonable to expect it, since the symplectic geography problem in high dimensions is considerably wild, cf. \cite{Go95}. However, the particular case of very ample divisors was worked out by S. Donaldson in a series of foundational articles \cite{Do96, Do99}. The claim is that a theory of asymptotically very ample divisors can be developed in symplectic geometry, in other words very ample linear systems are available. The notion of ampleness is related to positivity, which holds due to the non--degeneracy of the symplectic form.  The implications of these results are the same as in projective geometry:
\begin{itemize}
\item[-] Bertini's theorem on the existence and genericity of smooth very ample divisors \cite{Do96}.
\item[-] Existence of symplectic Lefschetz pencils \cite{Do99} and associated symplectic invariants \cite{Do98,ADK04}.
\item[-] Connectedness of the space of very ample divisors \cite{Au97}.
\item[-] High--dimensional linear systems in the symplectic setting \cite{Au00}.
\end{itemize}
Maybe, the main conclusion is the existence of nice decompositions of a symplectic manifold in the same fashion as in the projective setting. This is not enough to classify though, but it provides a better understanding of the symplectic topology. In other words, a Lefschetz pencil is a clever way of trivializing a symplectic manifold.  Therefore, the implications of the existence result are the usual implications of a statement providing a combinatorial description of a geometric object: construction of solutions of equations \cite{DS03} and building blocks for the definition of new theories \cite{Se08}.\\

Contact geometry can be understood as a conformal analogue of the symplectic geometry and from this understanding the Donaldson techniques have been adapted to the contact setting. However, the general picture was initially far less clear since there is no classical analogue of the projective setting for contact manifolds. It turned out that there is one: the goal of these notes is to show its behaviour and the results it produces. The history developed as follows. The first attempt was to study the existence of codimension $2$ contact submanifolds on a general contact manifold \cite{IMP99}, this was non--achievable by the $h$--principle and it is the expected analogue of the Bertini's theorem in contact geometry. Not surprisingly, the contact picture was much more flexible than the symplectic one and it was shown that any codimension $2$ integer homology class on a closed contact manifold admits a smooth contact representative. The next two constructions to be worked out are the analogues of the:
\begin{itemize}
\item[-] Lefschetz pencil decomposition of a symplectic manifold \cite{Do99}.
\item[-] Decomposition of a symplectic manifold in terms of a very ample divisor and its Stein complementary \cite{Bi01}.
\end{itemize}
The equivalent of the Stein--divisor decomposition for a contact manifold is the open book decomposition constructed by Giroux and Mohsen \cite{Gi02,GM12}. There, the Stein manifold in which the symplectic manifold is trivialized is substituted by a $1$--parametric family of Stein manifolds, the so--called leaves of the open book, and the divisor becomes a codimension $2$ contact submanifold. The equivalent of the Donaldson's construction is straightforward and introduces the concept of a contact Lefschetz pencil \cite{Pr02}. The idea in that case is to produce a codimension $2$ fibration over the sphere whose fibers are contact manifolds, special singular fibers are also allowed corresponding to parametric holomorphic singularities.\\

The central question concerns the possible uses of these constructions, the essential feature being that these constructions are almost topological. In other words, they are $h$--principle achievable: there is no need for a contact structure in order to produce them, an almost contact structure is enough for them to exist. In case the contact structure could be recovered, they would produce an existence result in contact topology: any almost contact structure could be deformed to a contact one. This was highly unexpected a decade ago but nowadays it seems to be a reasonable statement in contact topology. The reason for the old perception is based on the fact that the almost symplectic condition does not imply the existence of a symplectic structure, which was proved in the late nineties. Hence it was believed to be a matter of time to find equivalent examples in the contact category. Nevertheless, the appearance of the previously mentioned decompositions gave support to the idea of almost contact implying contact being conceivable. The reason is that both the Stein--divisor decomposition and the Lefschetz pencil decomposition are not doable in the almost symplectic setting and they constitute an actual geometric obstruction to the existence of a symplectic structure on a general almost symplectic manifold.\\

This approach has been successful in dimension $5$, where any almost contact manifold has been proved to be contact \cite{CPP12} through the appropriate use of an almost contact pencil decomposition. The hope is that any of the two decompositions will eventually succeed to prove the existence of a contact structure in higher dimensions. Based on that, we detail in these notes the construction of the two decompositions.\\

\noindent The structure of the article reads as follows. In Section \ref{section:asymp} we establish the foundations of the approximately holomorphic theory for almost contact geometry. In Section \ref{sect:ob} we provide the argument of E. Giroux for the existence of open book decompositions adapted to a contact structure. In Section \ref{sect:pencil} we detail the construction of almost contact Lefschetz pencils after \cite{Pr02} and \cite{Ma09}. \\

\noindent {\bf Acknowledgements.} I want to thank Roger Casals by his proof-reading that has greatly improved the final version of this article. Special thanks also to Emmanuel Giroux who patiently explained to me the results contained in Section \ref{sect:ob}. Most of the ideas in these notes come from him. J.P. Mohsen kindly offered a copy of his thesis \cite{Mo01} to me. Discussions with Vincent Colin, Dishant Pancholi, Klaus Niederkr\"ueger, Thomas Vogel and Eva Miranda have been really helpful to write down these notes.

\section{Approximately holomorphic techniques} \label{section:asymp}
Let $M$ be a $(2n+1)$--dimensional smooth manifold. A global distribution $\xi\subset TM$ is said to be a contact structure if it admits a global $1$--form $\alpha\in \Omega^1(M)$ such that $\xi=\ker \alpha$ and $\alpha \wedge (d\alpha)^{n} >0$ everywhere. A contact manifold is a manifold with a contact structure. The $1$--form $\alpha$ defining the contact structure is said to be a contact form for the distribution.\\

\noindent In the literature this definition corresponds to the notion of a cooriented contact distribution, we will restrict ourselves to this case\footnote{Just once and for all it is important to mention that all the results in these notes can be easily adapted to the non--coorientable case. The essential point being that any non--coorientable contact manifold admits a coorientable double--cover. Therefore to study non--coorientable manifolds is reduced to study coorientable ones with free $\Z/2\Z$--actions. See \cite{IMP99} for details.}. Note that the contact condition does not strictly depend on the choice of the $1$--form $\alpha$, any other $1$--form $\alpha'= f\alpha$, with $f:M\longrightarrow\R^+$ a smooth function, also satisfies
$$ \alpha' \wedge (d\alpha')^n =f^{n+1} \alpha \wedge (d\alpha)^n>0. $$
Let us emphasize the topological features of a contact distribution. There are two topological objects appearing in the definition
\begin{enumerate}
\item The distribution $\xi$, a real codimension $1$ subbundle of $TM$.
\item The symplectic structure induced in the bundle $\xi$ by $d\alpha$. To be precise, only the conformal symplectic class is determined: a change of form $\alpha'=f\alpha$ as above does change the representatives from the symplectic bundle $(\xi, d\alpha)$ to $(\xi, fd\alpha)$.
\end{enumerate}
We therefore define an almost contact manifold as a $(2n+1)$--dimensional manifold $M$ with a codimension--1 cooriented\footnote{The normal bundle $TM/\xi$ of $\xi$ as a subbundle of $TM$ is trivial.} distribution $\xi$ and a conformally symplectic class on $\xi$, understood as an abstract bundle. A distribution admitting a conformally symplectic class is called an almost contact structure. The almost contact condition might be seen as the formal necessary condition for the existence of a contact structure. The long standing conjecture in contact topology is
\begin{conjecture}\label{conj:a}
Any almost contact structure on a manifold $M$ admits a deformation in its homotopy class of almost contact structures to a contact structure.
\end{conjecture}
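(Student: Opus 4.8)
\noindent The natural plan is an induction on dimension that uses the geometric decompositions of these notes to reduce the conjecture in dimension $2n+1$ to its truth in dimension $2n-1$. Given an almost contact manifold $(M^{2n+1},\xi)$, one first fixes compatible auxiliary data on the symplectic bundle $\xi$ and a reference line bundle $L\to M$ of large positive curvature as in Section~\ref{section:asymp}, and then applies the approximately holomorphic machinery. Since no integrability of $\xi$ is assumed, this produces --- subordinate to $L^{\otimes k}$ for $k$ large --- an almost contact Lefschetz pencil (Section~\ref{sect:pencil}): a map to $\CP^1$ whose generic fibers are closed almost contact $(2n-1)$-manifolds, together with a monodromy that is a priori only a diffeomorphism, a codimension $4$ base locus, and finitely many parametric elliptic singular fibers. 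One could equally attempt to run the argument through the open book of Section~\ref{sect:ob}.

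\noindent By the inductive hypothesis each regular fiber deforms, within its formal homotopy class, to a genuine contact manifold; the base case $2n-1=3$ holds because every closed oriented $3$-manifold is contact, and the ensuing statement in dimension $5$ is the theorem of \cite{CPP12}. The content of the induction step is to reassemble a genuine contact form on $M$ from the fiberwise contact structures, the transverse $\CP^1$-direction, and local models near the base locus and the singular fibers. Concretely one seeks a Thurston--Winkelnkemper / Giroux-type expression $\alpha=\alpha_{\mathrm{fib}}+K\,\beta$, with $K$ large and $\beta$ pulling back a volume form on the base, using the positivity of $L^{\otimes k}$ to dominate the error terms, which are $O(k^{-1/2})$ in the scaled coordinates supplied by the approximately holomorphic estimates.

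\noindent The main obstacle is making the monodromy genuinely contact. For the gluing to close up, the formal monodromy diffeomorphism of a regular fiber must be deformed, through diffeomorphisms, to a contactomorphism of that fiber's contact structure --- the contact analogue of the requirement that an open book's monodromy be a symplectomorphism of the page --- and, likewise, the parametric singular fibers and the base locus must be brought into genuine (parametric) holomorphic normal form rather than merely approximately holomorphic ones. In the almost symplectic setting the corresponding step genuinely fails, which is exactly the geometric obstruction to symplectic existence; the expectation, borne out in dimension $5$, is that the conformal freedom $\alpha\mapsto f\alpha$, the extra transverse direction, and the possibility of stabilising the pencil by increasing $k$ together provide enough room to absorb these obstructions. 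Turning that expectation into a uniform-in-$n$ argument that controls the monodromy, the base locus and the singular models simultaneously is precisely what is missing, and is why Conjecture~\ref{conj:a} remains open for $2n+1\geq 7$.
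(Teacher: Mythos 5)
This statement is a \emph{conjecture}: the paper offers no proof of it and explicitly records that it is open in general (established only in dimension $3$ by Lutz--Martinet, in dimension $5$ by \cite{CPP12}, and partially elsewhere). Your proposal is not a proof either --- and to your credit you say so in the last sentence --- so it cannot be accepted as one. It is, however, a fair description of the strategy the paper itself advocates in the introduction: use the $h$-principle-achievable decomposition (pencil or open book) of the almost contact manifold and try to promote it to a genuinely contact one, which is exactly how \cite{CPP12} handles dimension $5$.

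The gap is not a detail but the entire inductive step, and it is worth being precise about why it is hard. First, the fibers of a quasi-contact pencil produced from an almost contact structure are only \emph{quasi-contact}, and they all contain the base locus $B$ in their closure; they are not disjoint closed $(2n-1)$-manifolds to which the inductive hypothesis can be applied independently. One needs a \emph{parametric} and \emph{relative} version of the conjecture: the fiberwise deformations to contact structures must vary coherently over $\CP^1$ minus the critical values, agree near $B$ with a fixed model, extend over the Lefschetz-type singular fibers, and be intertwined by the monodromy. None of this follows from the unparametrized statement in dimension $2n-1$. Second, your proposed gluing formula $\alpha=\alpha_{\mathrm{fib}}+K\beta$ with $\beta$ ``pulling back a volume form on the base'' does not typecheck: a volume form on $\CP^1$ is a $2$-form with no global primitive, and the Thurston--Winkelnkemper trick of adding $K\,d\theta$ is available only for fibrations over $S^1$ (i.e.\ the open book route), not over $\CP^1$; even there, verifying $\alpha\wedge(d\alpha)^n>0$ near the binding and through the monodromy region is where the real work lies, as Lemma \ref{lem:special} and the proof of Proposition 3.5 in the paper make clear. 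Until the monodromy, the base locus, and the singular models are controlled simultaneously and uniformly in $n$, the argument does not close, which is precisely why the statement remains a conjecture for $2n+1\geq 7$.
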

In the case of open manifolds $M$, the result is true and it is one of the first applications of Gromov's $h$--principle, see \cite{Gr86}. The situation is not as established for closed manifolds. The conjecture was proven by R. Lutz \cite{Lu77} for $3$--dimensional closed manifolds. The classification of simply connected $5$--dimensional contact closed manifolds allowed H. Geiges \cite{Ge91} to also answer positively in these cases. The general $5$--dimensional case was recently proved by Casals et al., see \cite{CPP12}.  The conjecture remains open in general, some recent progress has been obtained by E. Giroux using the techniques described in Section \ref{sect:ob}.

\subsection{The quasi--contact category.}
An initial strategy in a geometric setting consists in understanding the implications of the $h$--principle; in our case we start with an almost--contact manifold. The further structure that the $h$--principle offers is provided in the following
\begin{definition}
A quasi--contact structure on a $(2n+1)$--dimensional manifold $M$ is a pair $(\xi,\beta)$  satisfying:
\begin{enumerate}
\item[-] $\xi$ is a cooriented distribution.
\item[-] $\beta$ is a $1$--form on $M$ such that $(\xi,d\beta)$ is a symplectic bundle.
\end{enumerate}
\end{definition}
Observe that the condition is stronger than the almost--contact one for $d\beta$ is necessarily closed, and not just a non--degenerate $2$--form. However, it is still weaker than the contact condition since $(\xi,\beta)$ inducing a contact structure would imply $\alpha=\beta$, with the previous notations. As previously mentioned, the quasi--contact condition can be reached through the $h$--principle, indeed one may show:
\begin{lemma} \label{lem:existe_quasi}
Any almost--contact structure admits a quasi--contact structure in its homotopy class of symplectic hyperplane fields.
\end{lemma}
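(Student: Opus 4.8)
The plan is to deduce the statement from Gromov's convex integration technique \cite{Gr86}, after a reduction that eliminates the role of $\xi$ and followed by a short homotopy--theoretic bookkeeping. Recall first that an almost--contact structure on $M^{2n+1}$ is the same datum as a pair $(\alpha_0,\omega_0)\in\Omega^1(M)\times\Omega^2(M)$ with $\alpha_0\wedge\omega_0^n>0$ everywhere: then $\xi_0=\ker\alpha_0$, cooriented by $\alpha_0$, and $\omega_0|_{\xi_0}$ represents the conformal symplectic class, so the associated symplectic hyperplane field is $(\xi_0,\omega_0|_{\xi_0})$. Similarly a quasi--contact structure $(\xi,\beta)$ has associated symplectic hyperplane field $(\xi,d\beta|_{\xi})$, and one checks immediately that $d\beta|_{\xi}$ is non--degenerate exactly when $d\beta$ has rank $2n$ at every point, equivalently $(d\beta)^n\neq0$, and $\xi$ is transverse to the line field $\ker d\beta$. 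It is therefore enough to produce a $1$--form $\beta$ with $(d\beta)^n\neq0$ everywhere, obtained from $(\alpha_0,\omega_0)$ by an explicit deformation.

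Accordingly, consider the open first--order differential relation $\mathcal R\subset J^1(T^*M)$ cut out by the condition $(d\beta)^n\neq0$; note that on a $(2n+1)$--manifold this already forces $\rk\,d\beta=2n$. A formal solution of $\mathcal R$ is furnished by the almost--contact structure itself: pick a line field $\ell_0=\langle R_0\rangle$ transverse to $\xi_0$, let $\pi\colon TM\to\xi_0$ be the projection along $\ell_0$, and put $F_0:=\pi^*\omega_0$. Then $F_0^n=\pi^*(\omega_0^n)\neq0$ and $\ker F_0=\ell_0$, so the pair consisting of the zero $1$--form and $F_0$ is a section of $\mathcal R$.

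The essential point is that $\mathcal R$ is \emph{ample}, and this is precisely where odd dimensionality enters. Working in coordinates at a point and taking the principal direction $\partial_1$, varying $\partial_1\beta$ changes $d\beta$ only by a term $dx^1\wedge\eta$ with $\eta$ free of $dx^1$. Write $d\beta=dx^1\wedge\theta+\sigma$ with $\theta,\sigma$ free of $dx^1$, so $\sigma\in\Lambda^2W^*$ with $W=\ker dx^1$ of dimension $2n$, and set $\psi=\theta+\eta$; a direct expansion gives
\[ (d\beta+dx^1\wedge\eta)^n \;=\; \operatorname{Pf}(\sigma)\,\operatorname{vol}_W \;+\; n\, dx^1\wedge\psi\wedge\sigma^{n-1}, \]
where the two summands lie in complementary summands of $\Lambda^{2n}T^*M$. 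Since on $\mathcal R$ one has $\rk\,d\beta=2n$, the restriction $\sigma=d\beta|_W$ has rank $2n$ or $2n-2$: in the former case the displayed form never vanishes and the slice of $\mathcal R$ in this principal direction is everything; in the latter case $\ker\sigma$ is $2$--dimensional, $\sigma^{n-1}\neq0$, and $\psi\wedge\sigma^{n-1}=0$ exactly for $\psi$ in the codimension--$2$ subspace of $W^*$ annihilating $\ker\sigma$, so the slice is the complement of an affine subspace of codimension $2$ --- connected, with convex hull the whole principal subspace. Hence $\mathcal R$ is open and ample. (In the even--dimensional, almost--symplectic analogue the corresponding bad set has codimension $1$ and the relation fails to be ample, as it must, since there the conclusion is false; this is a reassuring check.) Gromov's convex integration theorem now applies in its parametric, $C^0$--dense form, so the formal solution above is homotopic, through sections of $\mathcal R$, to a genuine $1$--form $\beta_1$ with $(d\beta_1)^n\neq0$.

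It remains to extract the homotopy of symplectic hyperplane fields. The homotopy of formal solutions produces a path $F_t$ of $2$--forms of constant rank $2n$ with $F_0=\pi^*\omega_0$ and $F_1=d\beta_1$, hence a path $\ell_t:=\ker F_t$ of line fields from $\ell_0$ to $\ker d\beta_1$, whose orientability class is locally constant in $t$. Transporting the complement $\xi_0$ of $\ell_0$, together with its coorientation, along $\ell_t$ yields a hyperplane field $\xi_1$ transverse to $\ker d\beta_1$ and homotopic to $\xi_0$; then $(\xi_1,\beta_1)$ is a quasi--contact structure, $(\xi_t,F_t|_{\xi_t})$ is a homotopy of symplectic hyperplane fields from $(\xi_0,\omega_0)$ to the one induced by $(\xi_1,\beta_1)$, and the sign condition $\alpha\wedge(d\beta)^n>0$ is preserved because $F_t^n$ never vanishes. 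The main obstacle in this plan is the ampleness verification above; granting it, the remainder is Gromov's theorem together with standard bundle--homotopy arguments.
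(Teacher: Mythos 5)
Your argument is correct, and it is essentially the intended one: the paper offers no proof of this lemma but defers to Lemma 2.2 of \cite{CPP12}, which is established by precisely this convex--integration argument (McDuff's h--principle for exact $2$--forms of maximal rank on odd--dimensional manifolds, \emph{Ann. Inst. Fourier} 1987). The only points worth tightening are that slices over jets with $\rk\,\sigma\leq 2n-4$ are empty (which the definition of ampleness permits) and that the principal coordinate $\partial_1\beta_1$ does not affect $d\beta$, so each nonempty slice is a cylinder over the set you describe --- neither affects the conclusion.
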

For the proof see Lemma 2.2 in \cite{CPP12}. In the article \cite{CPP12} the definition of quasi--contact structure is given in a slightly more general setting, however no further applications are obtained and so we may concentrate in the more adapted definition above.\\

The fundamental property of quasi--contact manifolds is the closedness condition $d(d\beta)=0$. This is the precise piece of data we require to develop the theory of approximately holomorphic bundles: to begin with, a closed $2$--form topologically induces a complex line bundle. Let us start with the definitions: the pre--quantizable line bundle associated to the quasi--contact structure $(\xi,\beta)$ is the hermitian line bundle $L:=M\times \C$ with the choice of connection $\nabla_L= d - i\beta$.\\

A compatible almost complex structure for the quasi--contact structure $(\xi, \beta)$ is a compatible complex structure $J$ for the symplectic bundle $(\xi,d\beta)$. Also, a compatible metric for $(\xi,\beta,J)$ is any Riemannian metric $g$ such that
$$g(u,v)= d\beta(u, Jv), $$
for all $u,v \in \xi$ and such that $\ker d\beta$ is orthogonal to $\xi$. This amounts to a choice of a unitary vector field in $\ker d\beta$. Let us fix the unitary vector field orthogonal to $\xi$, it will be referred as the Reeb vector field $R$. We define the $1$--form
$$\alpha(v):=g(R,v),$$
that clearly satisfies $\ker \alpha=\xi$. We suppose that set of objects $(\xi, \beta, g)$ and the induced $R$ and $\alpha$ are given. For convenience, we also fix the following sequence of Riemannian metrics $g_k=k\cdot g$.\\

Let $E$ be a hermitian complex bundle with connection $\nabla$, we can split the connection along $\xi$ in its holomorphic and antiholomorphic parts since $\nabla$ restricted to $\xi$ is an operator between complex linear spaces and therefore admits a decomposition
$$ \nabla_{| \xi} = \partial + \bar{\partial}. $$
As explained in Section \ref{sect:introduction}, we should be able to produce symplectic and contact divisors. In analogy with the projective setting, the procedure Donaldson developed provides such divisors as vanishing loci of sections of a vector bundle. Instead of complex submanifolds from holomorphic sections we procure to obtain symplectic and contact submanifolds from asymptotically holomorphic sections. For a symplectic or contact structure to be induced in the vanishing locus the cut of the asymptotically holomorphic section with the base manifold has to satisfy certain transversality condition. Let us recall the following ideas from linear algebra:
\begin{definition}
A linear map $f: \R^n \longrightarrow \R^r$ is said to be $\varepsilon$--transverse to zero if it admits a right inverse of norm smaller than $\varepsilon^{-1}$.
\end{definition} 
\noindent There is a more geometric way of understanding the previous property
\begin{lemma}\label{lem:useful}
A linear map $f: \R^n \longrightarrow \R^r$ is $\varepsilon$--transverse to zero if and only if there exists an $r$--dimensional subspace $W \subset \R^n$ such that for any $w\in W$, we have
$$ |f(w)| \geq \varepsilon |w|.$$
\end{lemma}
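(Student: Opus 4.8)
The plan is to prove both implications directly from the definition of $\varepsilon$--transversality, which asks for a right inverse $g : \R^r \to \R^n$ of $f$ (so $f \circ g = \Id_{\R^r}$) with operator norm $\|g\| < \varepsilon^{-1}$.

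First I would prove the forward direction. Suppose $f$ is $\varepsilon$--transverse to zero with right inverse $g$ of norm $< \varepsilon^{-1}$. Set $W := \im g \subset \R^n$. Since $f \circ g = \Id_{\R^r}$, the map $g$ is injective, so $\dim W = r$ (note $r \le n$ is forced, consistently with the statement). Now take any $w \in W$, say $w = g(v)$ with $v \in \R^r$. Then $f(w) = f(g(v)) = v$, so $|f(w)| = |v|$; on the other hand $|w| = |g(v)| \le \|g\|\, |v| < \varepsilon^{-1} |v|$, hence $|v| > \varepsilon |w|$, i.e. $|f(w)| \ge \varepsilon |w|$ (in fact with strict inequality, but the non-strict form is what is claimed and is cleaner to state). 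This handles one direction.

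For the converse, suppose such a subspace $W$ of dimension $r$ exists with $|f(w)| \ge \varepsilon |w|$ for all $w \in W$. The inequality forces $f|_W$ to be injective, and since $\dim W = r = \dim \R^r$, the restriction $f|_W : W \to \R^r$ is a linear isomorphism. Define $g := (f|_W)^{-1} : \R^r \to W \subset \R^n$. Then $f \circ g = \Id_{\R^r}$, so $g$ is a right inverse of $f$. It remains to bound $\|g\|$: for $v \in \R^r$ write $v = f(w)$ with $w = g(v) \in W$; the hypothesis gives $|v| = |f(w)| \ge \varepsilon |w| = \varepsilon |g(v)|$, so $|g(v)| \le \varepsilon^{-1} |v|$. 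This yields $\|g\| \le \varepsilon^{-1}$. To get the strict inequality $\|g\| < \varepsilon^{-1}$ demanded by the definition, one can either absorb it into a harmless rescaling of $\varepsilon$ (which is how these statements are always used in the approximately holomorphic setup, the constants never being sharp), or simply observe that the lemma as stated can be read with the convention that the two inequalities $\|g\| < \varepsilon^{-1}$ and $|f(w)| \ge \varepsilon|w|$ are interchangeable up to an arbitrarily small loss; I would add a parenthetical remark to that effect rather than belabor it.

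I do not expect any genuine obstacle here — this is a linear-algebra fact and the two directions are essentially dual constructions ($W = \im g$ one way, $g = (f|_W)^{-1}$ the other). The only mildly delicate point is the mismatch between the strict inequality in the definition of $\varepsilon$--transversality and the non-strict inequality in the geometric reformulation; the cleanest fix is to note that $\varepsilon$--transversality is an open condition, so replacing $\varepsilon$ by $\varepsilon' < \varepsilon$ slightly when passing from the norm bound to the subspace bound (or vice versa) costs nothing and the equivalence holds in the only sense that matters for the applications. I would state the proof for the non-strict versions of both conditions and remark that this is equivalent to the original for all practical purposes.
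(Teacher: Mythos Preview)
Your proof is correct and follows exactly the same approach as the paper: take $W=\im g$ for the forward direction and $g=(f|_W)^{-1}$ for the converse. You supply more detail than the paper (which simply asserts that the constructions work) and you explicitly flag the strict/non-strict inequality mismatch that the paper passes over in silence; your handling of that point is appropriate.
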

\begin{proof}
If there exists a right inverse $g:\R^r\longrightarrow\R^n$, set $W=g(\R^r)$. This satisfies the required property. Conversely, suppose that such subspace $W\subset \R^n$ exists. Define $g$ as the right inverse map of the restriction $f:W \longrightarrow \R^r$, which exists since $dim_\R(W)=r$.
\end{proof}
\noindent The linear condition required for the tangent bundle of the submanifold to be a symplectic subbundle can be stated as
\begin{lemma} $($Proposition 3 in \cite{Do96}$)$ \label{lem:antilin}
Let $f: \C^n \to \C^r$ be an $\R$--linear map $\varepsilon$--transverse to zero. Suppose there exists a $\delta>0$, depending only on $\varepsilon$, such that the antiholomorphic part of $f$ satisfies
$$ |f^{0,1}| \leq \delta,$$
then $\ker f$ is a symplectic subspace of $\C^n$.
\end{lemma}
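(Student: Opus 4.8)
The plan is to compare $f$ with its complex--linear part $f^{1,0}$ and to exploit that ``being a symplectic subspace'' is an open, quantitative condition. Throughout, identify $\C^n$ with $\R^{2n}$ equipped with its standard metric $\langle\cdot,\cdot\rangle$, complex structure $J$, and symplectic form $\omega(u,v)=\langle Ju,v\rangle$, and write $f=f^{1,0}+f^{0,1}$ with $f^{1,0}$ complex linear and $f^{0,1}$ complex antilinear. Since $f$ has a right inverse it is onto, so $\ker f$ has real dimension $2n-2r$; and since $f^{1,0}$ is complex linear, $K_0:=\ker f^{1,0}$ is a \emph{complex} subspace, so $\omega$ restricts nondegenerately to $K_0$ — indeed $\omega(y,Jy)=|y|^2$ for $y\in K_0$, and $Jy\in K_0$. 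The lemma will follow once $\delta$ is small enough, depending only on $\varepsilon$, to force $\ker f$ to be so close to $K_0$ that $\omega|_{\ker f}$ remains nondegenerate.

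The first step is to check that $f^{1,0}$ is itself transverse to zero, with a controlled constant. If $g$ is a right inverse of $f$ with $\|g\|<\varepsilon^{-1}$, then $f^{1,0}g=\Id-f^{0,1}g$ with $\|f^{0,1}g\|\le\delta/\varepsilon<1$ (assuming $\delta<\varepsilon$), so $\Id-f^{0,1}g$ is invertible via a Neumann series and $h:=g(\Id-f^{0,1}g)^{-1}$ is a right inverse of $f^{1,0}$ with $\|h\|\le(\varepsilon-\delta)^{-1}$. In particular $f^{1,0}$ is onto, $K_0$ has real codimension $2r$, and — exactly as in the proof of Lemma~\ref{lem:useful} — the plane $W_0:=h(\R^{2r})$ gives a splitting $\R^{2n}=K_0\oplus W_0$ on which $|f^{1,0}w|\ge(\varepsilon-\delta)|w|$. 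Hence every unit $v\in\ker f$ is close to $K_0$: writing $v=v_0+v_1$ with $v_0\in K_0$, $v_1\in W_0$, we have $f^{1,0}v_1=f^{1,0}v=fv-f^{0,1}v=-f^{0,1}v$, so $(\varepsilon-\delta)|v_1|\le|f^{0,1}v|\le\delta$ and therefore $|v-v_0|=|v_1|\le\eta_1:=\delta/(\varepsilon-\delta)$, $1-\eta_1\le|v_0|\le 1+\eta_1$.

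The hypothesis $|f^{0,1}|\le\delta$ enters through the opposite comparison: any $y\in K_0$ admits a point of $\ker f$ within $(\delta/\varepsilon)|y|$ of it. Indeed $fy=f^{1,0}y+f^{0,1}y=f^{0,1}y$ for $y\in K_0$, so $u:=y-g(fy)$ lies in $\ker f$ (because $fu=fy-fy=0$) and $|u-y|=|g(f^{0,1}y)|\le(\delta/\varepsilon)|y|$. Now fix a unit $v\in\ker f$, take $v_0\in K_0$ as above, set $\tilde v_0:=v_0/|v_0|\in K_0$, and apply this with $y=J\tilde v_0\in K_0$ to obtain $u\in\ker f$ with $|u-J\tilde v_0|\le\eta_2:=\delta/\varepsilon$; then $\omega(\tilde v_0,u)\ge\omega(\tilde v_0,J\tilde v_0)-\eta_2=1-\eta_2$ and $|u|\le 1+\eta_2$. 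Writing $v=|v_0|\tilde v_0+(v-v_0)$ and using bilinearity,
\[
\omega(v,u)\ \ge\ |v_0|(1-\eta_2)-|v-v_0|\,|u|\ \ge\ (1-\eta_1)(1-\eta_2)-\eta_1(1+\eta_2)\ =\ 1-2\eta_1-\eta_2,
\]
which is strictly positive once $\delta$ is small enough in terms of $\varepsilon$ alone (for instance $\delta<\varepsilon/5$, so that $\eta_1\le 2\delta/\varepsilon$). Hence no nonzero vector of $\ker f$ lies in its $\omega$--orthogonal complement, i.e.\ $\omega|_{\ker f}$ is nondegenerate and $\ker f$ is symplectic.

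I do not expect a genuine obstacle: the argument is a routine perturbation of a complex subspace, and the conceptual core is the pair of one--sided estimates in the last two paragraphs — one controlling how far $\ker f$ sits from $K_0$ (using the transversality of $f$), the other controlling how far $K_0$ sits from $\ker f$ (using the smallness of $f^{0,1}$). The only thing requiring mild care is that every constant — $\eta_1$, $\eta_2$, and the threshold on $\delta$ — depend on $\varepsilon$ alone and not on $n$, $r$, or the particular $f$; this holds automatically since each tool used (the norm of the right inverse, Lemma~\ref{lem:useful}, and $\omega(y,Jy)=|y|^2$ on the complex subspace $K_0$) is dimension--free.
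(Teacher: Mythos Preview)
Your proof is correct and follows precisely the approach the paper sketches in one line: when $f^{0,1}=0$ the kernel is complex and hence symplectic, and the symplectic condition is open. You have simply supplied the quantitative details --- the Neumann--series control of $f^{1,0}$, the two one--sided distance estimates between $\ker f$ and $K_0$, and the explicit bound $1-2\eta_1-\eta_2>0$ --- that make this openness argument effective with $\delta$ depending only on $\varepsilon$.
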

\noindent The proof is based on the fact that the condition $f^{0,1}=0$ implies that the subspace is complex and therefore symplectic for the compatible symplectic structure and the symplectic condition is open. We are in position to describe the suitable transversality condition:

\begin{definition}
Let $E\longrightarrow M$ be a hermitian complex bundle with connection $\nabla$. A section $s: M\longrightarrow E$ is said to be $\varepsilon$--transverse to zero along $\xi$ if $\forall x\in M$ any of the following conditions hold:
\begin{itemize}
\item[-] $|s(x)|>\varepsilon$,
\item[-] $\nabla_{\xi}(s)(x): \xi_x \to E_x$ is $\varepsilon$--transverse to zero.
\end{itemize}
\end{definition}

A submanifold $SÊ\stackrel{e}{\hookrightarrow} (M, \xi, \beta)$ is said to be quasi--contact if $\xi$ is everywhere transverse to $S$ and $(e^*(\xi), e^* \beta)$ is a quasi--contact structure on $S$. Let us provide a simple way to decide whether the zero locus of a section is a quasi--contact submanifold:
\begin{lemma} \label{lem:subquasi}
For any $\varepsilon>0$, there exists a $\delta>0$ such that if $s: M\longrightarrow E$ is $\varepsilon$--transverse to zero along $\xi$ and $|\bar{\partial} s| \leq \delta$, then the zero set $Z(s)$ is a smooth quasi--contact submanifold of $M$.
\end{lemma}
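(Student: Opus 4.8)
The strategy is to verify pointwise on $Z(s)$ that the three defining conditions of a quasi--contact submanifold hold, namely: (a) $Z(s)$ is a smooth submanifold of real codimension $2\rk_{\C} E$; (b) $\xi$ is everywhere transverse to $Z(s)$; and (c) the pair $(e^*\xi, e^*\beta)$ is a quasi--contact structure on $Z(s)$, which here amounts to checking that $e^*\xi$ is a cooriented hyperplane field in $TZ(s)$ and that $(e^*\xi, d(e^*\beta)) = (e^*\xi, e^* d\beta)$ is a symplectic bundle. The key point is that all of these are open conditions that are satisfied \emph{exactly} when the antiholomorphic part of $\nabla_\xi s$ vanishes, so the statement follows by a compactness/openness argument quantified by $\delta$.

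First I would fix $x \in Z(s)$, so $|s(x)| = 0 \leq \varepsilon$, and hence the $\varepsilon$--transversality along $\xi$ forces $\nabla_\xi s(x)\colon \xi_x \to E_x$ to be $\varepsilon$--transverse to zero in the linear sense. In particular $\nabla_\xi s(x)$ is surjective; since also $\nabla s(x)$ restricted to the Reeb direction contributes at most one more dimension to the image (and the image is already all of $E_x$), the full differential $\nabla s(x) = ds(x)\colon T_xM \to E_x$ is surjective. By the implicit function theorem $Z(s)$ is a smooth submanifold near $x$ of the expected codimension, giving (a), and $T_x Z(s) = \ker ds(x)$. For transversality of $\xi$ with $Z(s)$: surjectivity of $\nabla_\xi s(x) = ds(x)|_{\xi_x}$ shows $\xi_x + T_x Z(s) = T_x M$, so $\xi$ meets $Z(s)$ transversally and $e^*\xi = \xi \cap T_x Z(s)$ is a hyperplane in $T_x Z(s)$, cooriented by the coorientation of $\xi$; this is (b) and the first half of (c).

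For the symplectic condition in (c), apply Lemma \ref{lem:antilin} to the $\R$--linear map $f = \nabla_\xi s(x)\colon \xi_x \to E_x$, viewing $\xi_x$ as a complex vector space via the compatible $J$ and $E_x$ with its hermitian structure. We have $f^{0,1} = (\bar\partial s)(x)$, whose norm is $\leq |\bar\partial s| \leq \delta$ by hypothesis (measured in the metric $g$ — or $g_k$, which only rescales all the estimates coherently). By Lemma \ref{lem:antilin} there is a $\delta = \delta(\varepsilon) > 0$ so that $|f^{0,1}| \leq \delta$ implies $\ker f = \xi_x \cap \ker ds(x)|_{\xi_x}$ is a symplectic subspace of $(\xi_x, d\beta(x))$. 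Since $e^*\xi$ at $x$ is exactly this $\ker f$, we conclude $(e^*\xi, e^* d\beta)$ is a symplectic subbundle of $(\xi, d\beta)$ along $Z(s)$, which is precisely the remaining requirement for $(e^*\xi, e^*\beta)$ to be quasi--contact. Taking the $\delta$ furnished by Lemma \ref{lem:antilin} (possibly shrinking it so that the openness in the implicit function theorem step is uniform) completes the proof.

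The main obstacle is bookkeeping rather than conceptual: one must make sure the single constant $\delta$ works simultaneously for all three conditions and uniformly in $x \in M$, which relies on the fact that the right-inverse norm bound $\varepsilon^{-1}$ in the definition of $\varepsilon$--transversality along $\xi$ is a \emph{uniform} bound over $M$, so the implicit function theorem and Lemma \ref{lem:antilin} both apply with constants depending only on $\varepsilon$. One should also be slightly careful that $\bar\partial s$ is the antiholomorphic part of $\nabla_\xi s$ and not of the full connection, which is exactly the decomposition $\nabla_{|\xi} = \partial + \bar\partial$ recalled before the definition of $\varepsilon$--transversality along $\xi$; with that identification the hypothesis $|\bar\partial s| \leq \delta$ feeds directly into Lemma \ref{lem:antilin}.
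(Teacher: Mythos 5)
Your argument is correct and follows the same route as the paper: $\varepsilon$--transversality along $\xi$ forces surjectivity of $\nabla s$ on $Z(s)$ (hence smoothness and the right codimension), the dimension count on $\ker\nabla_\xi s$ gives transversality of $\xi$ to $Z(s)$, and Lemma \ref{lem:antilin} applied to $f=\nabla_\xi s(x)$ with $|f^{0,1}|=|\bar\partial s(x)|\leq\delta$ yields the symplectic condition on $e^*\xi$. The extra bookkeeping you include (uniformity of $\delta$ over $x$, the identification of $\bar\partial s$ with the antiholomorphic part of $\nabla_{|\xi}$) is consistent with what the paper leaves implicit.
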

\begin{proof} Denote by $r$ the rank of the complex bundle $E$. The $\varepsilon$--transversality along $\xi$, in particular, implies that the section is transverse to zero in the usual sense, i.e. for any $x\in Z(s)$, the linear map $\nabla s(x)$ is surjective. Therefore, the set $Z(s)$ is a smooth submanifold of dimension $2(n-r)+1$. The transversality along $\xi$ further implies that the submanifold $Z(s)$ is transverse to $\xi$, since for any point $x\in Z(s)$ we have that the induced distribution $e^*(\xi)= \ker \nabla_{\xi} s(x)$ on $x$ has real dimension $2(n-r)$.\\

\noindent It is left to verify that $e^*(\xi)= \ker \nabla_{\xi} s$ is symplectic everywhere, but for a suitable choice of $\delta$ we are in the hypothesis of the Lemma \ref{lem:antilin}.
\end{proof}

\noindent Note that $\varepsilon$--transversality is a $C^1$--stable notion. Indeed, let $s: M\longrightarrow E$ be a section $\varepsilon$--transverse along $\xi$ and $s_\delta:M\longrightarrow E$ a perturbative section satisfying that $|s_\delta|_{C^1} \leq \delta$. Then the perturbed section $s+s_\delta$ still is $(\varepsilon-c\delta)$--transverse to zero along $\xi$ for some universal constant $c>0$. Thus, $C^1$--perturbations do not destroy the estimated transversality along $\xi$.\\

Let us define a central notion in the approximately holomorphic techniques, these are also referred as asymptotically holomorphic techniques since the produced objects acquire a holomorphic behaviour at the limit. Given a line bundle $L$ on $M$ constructed as above, we can associate to the hermitian bundle $E$ the following sequence of bundles $E_k:= E \otimes L^{\otimes k}$ for $k\in \N$. This is related to the twisting sheaf in projective geometry, allowing to shift a coherent sheaf to an affine behaviour. In quasi--contact geometry, our aim is to produce the following objects:
\begin{definition}
A sequence of sections $s_k:M \longrightarrow E_k$ is $C^r$--asymptotically holomorphic if the following estimates hold
$$ |s_k|=O(1), \, |\nabla^l s_k|=O(1), \, |\nabla^{l-1} \bar{\partial} s_k|=O(k^{-1/2}),\quad\forall l\leq r,$$
the norms being measured with respect to the $g_k$--metric.
\end{definition}
\noindent The index $l$ will be omitted if it is clear from the context. As a consequence of the previous discussion we conclude:
\begin{corollary}
Let $s_k: M\longrightarrow E_k$ be an asymptotically holomorphic sequence of sections $\varepsilon$--transverse to zero along $\xi$. For $k$ large enough, the set $Z(s_k)$ is a smooth quasi-contact submanifold.
\end{corollary}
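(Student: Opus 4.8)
The plan is to deduce the statement directly from Lemma \ref{lem:subquasi}, the only point requiring attention being the verification of its smallness hypothesis uniformly in $k$.

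First I would fix $\varepsilon>0$ and let $\delta=\delta(\varepsilon)>0$ be the constant furnished by Lemma \ref{lem:subquasi}. The key observation is that this lemma is a statement about a single hermitian bundle with connection over a fixed Riemannian manifold, and that the constant $\delta$ depends only on $\varepsilon$ — it is produced through the purely linear-algebraic input of Lemma \ref{lem:antilin} — so in particular it is independent of $k$. Applying it to the bundle $E_k=E\otimes L^{\otimes k}$, equipped with the tensor-product connection $\nabla_E\otimes\Id+\Id\otimes\nabla_L^{\otimes k}$ and with the rescaled metric $g_k$, I obtain the following: if $s_k$ is $\varepsilon$--transverse to zero along $\xi$ and $|\bbd s_k|\leq\delta$ (norm measured in $g_k$), then $Z(s_k)$ is a smooth quasi-contact submanifold.

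Second, I would invoke the defining estimates of an asymptotically holomorphic sequence, which give $|\bbd s_k|=O(k^{-1/2})$ in the $g_k$--metric. Hence there is a $k_0$ such that $|\bbd s_k|\leq\delta$ for all $k\geq k_0$, and for such $k$ both hypotheses of Lemma \ref{lem:subquasi} are satisfied at every point of $M$, so the conclusion follows. The only delicate point — and the one I would emphasise — is the bookkeeping of normalisations: the $\varepsilon$--transversality along $\xi$, the bound on $\bbd s_k$, and the decay rate $O(k^{-1/2})$ are all measured with respect to the same rescaled metric $g_k$, exactly as the notions above were set up. Once this is kept straight, the corollary carries no analytic content beyond Lemma \ref{lem:subquasi} and the definition of asymptotic holomorphicity.
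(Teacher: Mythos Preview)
Your proposal is correct and matches the paper's intent exactly: the paper does not give a separate proof of this corollary but simply states it ``as a consequence of the previous discussion,'' meaning precisely the combination of Lemma~\ref{lem:subquasi} with the asymptotically holomorphic bound $|\bbd s_k|=O(k^{-1/2})$. Your explicit tracking of why $\delta$ is independent of $k$ (via the pointwise linear-algebraic nature of Lemma~\ref{lem:antilin}) and your care with the $g_k$--normalisations spell out what the paper leaves implicit.
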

\noindent The existence of $\varepsilon$--transverse asymptotically holomorphic sections is partially guaranteed due to the following:
\begin{theorem} \label{thm:Ber}
Let $E$ be a vector bundle, $\delta>0$ and $s_k: M\longrightarrow E_k$ be an asymptotically holomorphic sequence of sections. There exists a constant $\varepsilon>0$ and an asymptotically holomorphic sequence of sections $\sigma_k:  M\longrightarrow E_k$ such that they are $\varepsilon$--transverse to zero along $\xi$ and $|\sigma_k-s_k|_{C^2}\leq \delta$.
\end{theorem}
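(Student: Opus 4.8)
The plan is to follow Donaldson's original globalization scheme, adapting it to the quasi--contact setting along the contact distribution $\xi$. The proof has a local part and a globalization part. For the local part, one fixes a point $x\in M$ and works in a Darboux--type chart for $(\xi,\beta)$ scaled by $g_k$; after this rescaling the bundle $E_k$ admits, over a ball of $g_k$--radius $O(\log k)$, a local approximately holomorphic trivialization together with a concentrated reference section $\sref_{k,x}$ of Gaussian decay $e^{-c\,d_k(x,\cdot)^2}$ which is approximately holomorphic and has norm bounded below near $x$. This is the standard Donaldson peak section, and its construction is identical to the almost complex symplectic case since it only uses the curvature $F_{\nabla_L}=-i\,d\beta$ being non--degenerate on $\xi$; the presence of the Reeb direction is harmless because along $\xi$ everything restricts to the symplectic model.

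The heart of the argument is the quantitative Sard--type lemma. First I would reduce the problem to a statement about a single function: given an approximately holomorphic section $\s_k'$ already $\e$--transverse to zero along $\xi$ on a ball $B$ and a concentrated section $\sref_{k,x}$, one seeks $w\in\C^r$ with $|w|$ small (polynomially in $k$) so that $\s_k' + w\,\sref_{k,x}$ is $\e'$--transverse along $\xi$ on the concentric ball $\frac12 B$, with $\e'$ depending only on $\e$. The key input is Donaldson's quantitative transversality theorem for approximately holomorphic functions $u_k:B\to\C^r$: there is a value $w$ with $|w|\le \eta$ such that $u_k - w$ is $\rho(\eta)$--transverse to zero on $\frac12 B$, with $\rho(\eta)=\eta\,(\log(\eta^{-1}))^{-p}$ for a universal power $p=p(n)$. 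One applies this with $u_k$ built from $\nabla_\xi \s_k'$ trivialized via $\sref_{k,x}$, using Lemma~\ref{lem:useful} to translate $\e$--transversality of linear maps into the existence of good $r$--dimensional subspaces on which the derivative is uniformly large. The bound $|\bbd \s_k|=O(k^{-1/2})$ guarantees that $u_k$ is approximately holomorphic with the required estimates, and the $C^1$--stability remark after Lemma~\ref{lem:subquasi} ensures the perturbation does not destroy transversality previously achieved near other points.

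The globalization is then a finite iteration. One covers $M$ by $N_k=O(k^{n+1/2})$ balls $B_i$ of fixed $g_k$--radius whose concentric halves still cover $M$, chosen so that the $B_i$ split into a bounded number $D$ (independent of $k$) of families, each family consisting of balls pairwise $g_k$--far apart. Starting from $s_k$, one processes the families one at a time: within a family the perturbations have essentially disjoint support (up to the exponential tails of the $\sref$'s, which contribute at most $O(k^{-\infty})$ to neighbouring balls), so a single perturbation of size $\eta_j$ simultaneously achieves $\e_j$--transversality on all half--balls of that family while degrading the transversality already obtained on earlier families by at most $c\,\eta_j$. After $D$ steps one arrives at $\s_k$ with $|\s_k - s_k|_{C^2}\le \sum_j \eta_j \le \d$, which is $\e$--transverse to zero along $\xi$ on all of $M$ for $\e$ depending only on the final $\e_D>0$; since the number of steps $D$ is bounded independently of $k$, the losses telescope to a positive constant. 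The main obstacle is bookkeeping the interplay between the polynomial loss $\rho(\eta)$ in each transversality step and the additive loss $c\eta_j$ from processing successive families: one must choose the sequence $\eta_1>\eta_2>\cdots$ decreasing fast enough that the final transversality constant stays bounded below while $\sum\eta_j<\d$, and this requires precisely Donaldson's observation that $\rho$ is sublinear but not too degenerate. Beyond that, the only genuinely new point relative to the symplectic case is checking that the Reeb direction never interferes, which is immediate because all transversality is imposed along $\xi$ and the metrics $g_k$ were chosen so that $\ker d\beta\perp\xi$.
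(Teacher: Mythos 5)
Your route is genuinely different from the paper's. The paper deliberately does \emph{not} redo Donaldson's globalization on $M$ itself: it passes to the symplectization $S(M)=M\times[-\tau,\tau]$ with $\omega_S=d(\beta+t\alpha)$, invokes Mohsen's relative transversality theorem (Theorem \ref{thm:moh}) to obtain sections over $S(M)$ that are uniformly transverse over the submanifold $M\times\{0\}$, and then uses the linear--algebra Proposition \ref{ref:compare} to trade transversality along $T(M\times\{0\})$ for transversality along $\xi$. All the hard analysis is thus delegated to \cite{Mo01}. What you propose is the intrinsic alternative of \cite{IMP99}, which the paper explicitly mentions and declines to follow. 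Both are legitimate; yours is more self-contained in spirit, but it must then confront head-on the analytic difficulty that the paper's route outsources.

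That difficulty is exactly where your argument has a gap. Your rescaled chart is modelled on $\C^n\times\R$, not on $\C^n$: the function $u_k$ obtained by dividing by the peak section depends on the $2n$ coordinates approximating $\xi$ \emph{and} on the transverse real coordinate $s$, while transversality is demanded only in the $\xi$-directions. You therefore need a single $w\in\C^r$ with $|w|\le\eta$ such that $u_k(\cdot,s)-w$ is $\rho(\eta)$-transverse for \emph{every} $s$ simultaneously. The plain quantitative Sard lemma you quote gives, for each fixed $s$, a bad set of $w$'s of controlled size, but the union of these bad sets over the one-parameter family could a priori exhaust the whole $\eta$-ball; controlling that union requires the one-parametric refinement of the local transversality theorem (polynomial approximation together with Yomdin-type complexity bounds for the bad sets), which is the main technical content of \cite{Do99} and \cite{IMP99} and is precisely what Mohsen's theorem packages in the relative setting. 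Your closing remark that ``the Reeb direction never interferes \dots is immediate'' is therefore the opposite of the truth: this is the one point where the (quasi-)contact case is genuinely harder than the symplectic one. A smaller slip: the function fed to the local lemma should be $\s_k'$ itself divided by the reference section, not something built from $\nabla_\xi\s_k'$; transversality of the derivative is only needed later, for pencils. The globalization bookkeeping you describe ($O(k^{n+1/2})$ balls in $O(1)$ families, $\sum_j\eta_j\le\d$, telescoping losses) is correct and standard once the parametric local lemma is in hand.
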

\noindent We will give an overview of the proof of this Theorem in the rest of this Section. However, it is just the generalization to the quasi--contact category of the main result in \cite{IMP99}. The result implies the existence of quasi--contact divisors with prescribed topology:
\begin{corollary}
Fix an integer homology class $A\in H_{2n-1}(M,\Z)$, there exists a smooth quasi--contact submanifold $S$ representing it.
\end{corollary}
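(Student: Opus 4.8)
The plan is to reduce the statement to an application of Theorem~\ref{thm:Ber} together with the preceding Corollary, the only genuine content being the production of an \emph{initial} asymptotically holomorphic sequence whose zero locus realizes the prescribed homology class $A\in H_{2n-1}(M,\Z)$. First I would use Poincar\'e duality: since $A$ has real codimension $2$, its Poincar\'e dual $\mathrm{PD}(A)\in H^2(M,\Z)$ classifies a hermitian complex line bundle $E\to M$, equipped with some hermitian connection $\nabla_E$, whose Euler class is $\mathrm{PD}(A)$. The pre--quantizable bundle $L=M\times\C$ has curvature $-id\beta$, hence $c_1(L)=0$ in $H^2(M,\R)$ (indeed $L$ is topologically trivial), so the twisted bundles $E_k=E\otimes L^{\otimes k}$ all have $c_1(E_k)=\mathrm{PD}(A)$; in particular the zero locus of a section of $E_k$ transverse to zero represents $A$ for every $k$.

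Next I would manufacture an asymptotically holomorphic sequence $s_k:M\longrightarrow E_k$. This is the standard local construction underlying the approximately holomorphic theory: one builds, near each point $x\in M$, the Gaussian--type concentrated sections $s_{k,x}$ of $L^{\otimes k}$ coming from the connection $\nabla_L=d-i\beta$, which in $g_k$--normal Darboux--type coordinates decay like $e^{-|z|^2/4}$ and satisfy $|\bbd s_{k,x}|=O(k^{-1/2})$ in the $g_k$--metric (the $\bbd$ is computed along $\xi$, and the $O(k^{-1/2})$ error comes precisely from the failure of the almost complex structure $J$ and the coordinates to be integrable, bounded using $|\bbd s_k|$ as in Lemma~\ref{lem:subquasi}). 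Trivializing $E$ locally and multiplying by these reference sections, then patching with a partition of unity, one obtains a global asymptotically holomorphic sequence $s_k$ of sections of $E_k=E\otimes L^{\otimes k}$; these estimates are exactly the ones recorded in the definition of a $C^r$--asymptotically holomorphic sequence.

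Finally I would apply Theorem~\ref{thm:Ber} with, say, $\delta=1$ (any fixed constant will do) to the sequence $s_k$ constructed above, obtaining a constant $\varepsilon>0$ and an asymptotically holomorphic sequence $\sigma_k:M\longrightarrow E_k$ that is $\varepsilon$--transverse to zero along $\xi$ with $|\sigma_k-s_k|_{C^2}\leq 1$. By the Corollary preceding Theorem~\ref{thm:Ber}, for $k$ large enough $S:=Z(\sigma_k)$ is a smooth quasi--contact submanifold of $M$; and since $\sigma_k$ is in particular transverse to zero as an ordinary section of $E_k$, its zero locus is Poincar\'e dual to $c_1(E_k)=\mathrm{PD}(A)$, i.e. $[S]=A$ in $H_{2n-1}(M,\Z)$. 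The main obstacle in a complete write--up is not the homological bookkeeping, which is routine, but the construction of the concentrated reference sections $s_{k,x}$ and the verification of the $O(k^{-1/2})$ bound on $\bbd s_{k,x}$ in the rescaled metric $g_k$; this is where the quasi--contact closedness $d(d\beta)=0$ is used, and it is the technical heart that one inherits, in the present setting, from the constructions of \cite{IMP99}.
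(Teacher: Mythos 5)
Your proposal is correct and follows essentially the same route as the paper: Poincar\'e duality gives a class $\gamma=\mathrm{PD}(A)\in H^2(M,\Z)$, one realizes it as $c_1$ of a hermitian line bundle, twists by $L^{\otimes k}$ (which does not change the Chern class since $L$ is trivial), applies Theorem~\ref{thm:Ber}, and invokes the preceding corollary to see that the zero set is a quasi--contact submanifold dual to $\gamma$. The only difference is that your construction of concentrated reference sections is unnecessary here: the zero sequence is already asymptotically holomorphic, so Theorem~\ref{thm:Ber} can be applied to it directly, exactly as the paper does in its proof of Theorem~\ref{thm:Ber} itself via Mohsen's theorem.
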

\begin{proof}
Let $\gamma\in H^2(M, \Z)$ be the Poincar\'e dual of $A$. Construct a hermitian line bundle $L$ with $c_1(L)=\gamma$. Apply the Theorem \ref{thm:Ber} to the sequence $L_k$. Since $c_1(L_k)=c_1(L)=\gamma=PD(A)$, any submanifold $Z(s_k)$, for $k$ large enough, fulfills the requirements.
\end{proof}

The previous construction can also be made relative to a complex distribution. Let $s_k: M \longrightarrow E \otimes L^{\otimes k}$ be a $C^r$--asymptotically holomorphic sequence of sections and $D\subset \xi$ any fixed complex distribution, it is simple to verify that $\partial_D s_k: M \longrightarrow D^*\otimes E\otimes L^{\otimes k}$ is a $C^{r-1}$--asymptotically holomorphic sequence of sections. There is also an existence result for this case:

\begin{theorem} \label{thm:Ber2}
Let $V$ be a line bundle, $D\subset \xi$ complex distribution and $\delta>0$. Consider $s_k: M \longrightarrow V \otimes L^{\otimes k} \otimes \C^2$ an  asymptotically holomorphic sequence of sections. There exists a constant $\varepsilon$ and an asymptotically holomorphic sequence of sections $\sigma_k=(\sigma_{k,0}, \sigma_{k,1}): M \longrightarrow V \otimes L^{\otimes k} \otimes \C^2$ such that $|s_k- \sigma_k|_{C^2}\leq \delta$ and $\partial_D \sigma_{k,0} \otimes \sigma_{k,1} -  \sigma_{k,0} \otimes \partial_D \sigma_{k,1}$ are $\varepsilon$--transverse to zero along $\xi$.
\end{theorem}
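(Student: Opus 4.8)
The plan is to follow the iterated transversality scheme of Donaldson and Auroux \cite{Do99,Au00}, in the quasi--contact incarnation of \cite{Pr02,Ma09}, reading $W_k:=\partial_D\sigma_{k,0}\otimes\sigma_{k,1}-\sigma_{k,0}\otimes\partial_D\sigma_{k,1}$ --- a section of $D^*\otimes V^{\otimes 2}\otimes L^{\otimes 2k}$ --- as the homogenized derivative along $D$ of the projective map $\phi_k=[\sigma_{k,0}:\sigma_{k,1}]$, so that $\varepsilon$--transversality of $W_k$ along $\xi$ amounts to saying that the critical set of $\phi_k|_D$ is cut out with uniform control. First I would apply Theorem \ref{thm:Ber} to $s_k$ regarded as a section of $(V\otimes\C^2)\otimes L^{\otimes k}$, obtaining an asymptotically holomorphic $\sigma_k$, $C^2$--close to $s_k$, that is $\gamma$--transverse to zero along $\xi$ for some $\gamma>0$. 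This turns the base locus $B_k:=Z(\sigma_{k,0})\cap Z(\sigma_{k,1})$ into a smooth quasi--contact submanifold (the corollary of Lemma \ref{lem:subquasi}) and yields an open cover $M=U_0\cup U_1$ with $|\sigma_{k,i}|\ge c\,|\sigma_k|$ on $U_i$, for a fixed $c>0$.

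The core step is carried out separately on $U_0$ and $U_1$; I describe $U_0$. There the ratio $u_k:=\sigma_{k,0}/\sigma_{k,1}$ is a well defined $\C$--valued asymptotically holomorphic function --- here the lower bound $|\sigma_{k,1}|\ge c\,|\sigma_k|$ is used to control all $g_k$--derivatives and the $\bbd$--defect, at the cost of losing one order --- and one has the identity $W_k=\sigma_{k,1}^{\,2}\,\partial_D u_k$ on $U_0$. Consequently $\varepsilon$--transversality of $W_k$ along $\xi$ over $U_0$ is equivalent, up to a constant depending only on $\gamma$, to $\varepsilon'$--transversality along $\xi$ of the section $\partial_D u_k$ of $D^*$. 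I would then run the globalization underlying Theorem \ref{thm:Ber} --- the effective (quantitative) Sard lemma of \cite{Do96,Do99}, combined with a covering of $M$ by $O(k^{N})$ charts of bounded $g_k$--radius and a geometrically decaying sequence of perturbation scales --- but applied to $\partial_D u_k$, realizing each elementary local perturbation by adding to $\sigma_{k,0}$ a small multiple of a \emph{first order} concentrated reference section of $V\otimes L^{\otimes k}$, i.e. one of the shape $z^{(j)}\sref_{k,x}$ whose $D$--derivative at $x$ has $O(1)$ norm and $\bbd$--defect $O(k^{-1/2})$, chosen so that these span $D_x^*$. This yields, after a $C^2$--small extra perturbation of $\sigma_{k,0}$, the desired transversality over $U_0$; swapping the roles of $\sigma_{k,0}$ and $\sigma_{k,1}$ does the same over $U_1$. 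The two perturbations are performed successively and patched with cut--offs subordinate to $\{U_0,U_1\}$; since $\varepsilon$--transversality along $\xi$ is $C^1$--stable (the remark after Lemma \ref{lem:subquasi}), the $U_1$--step only degrades the $U_0$--estimate by a controlled factor, so a common $\varepsilon$ survives.

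A point deserving separate care is the base locus: $W_k$ vanishes identically on $B_k$, so there $\varepsilon$--transversality of $W_k$ along $\xi$ becomes a condition on $\nabla_\xi W_k|_{B_k}=\partial_D\sigma_{k,0}\otimes\nabla_\xi\sigma_{k,1}-\partial_D\sigma_{k,1}\otimes\nabla_\xi\sigma_{k,0}$, which is not automatic. I would handle this by a joint transversality argument in a tubular neighbourhood of $B_k$, treating $W_k$ together with the already $\gamma$--transverse pair $(\sigma_{k,0},\sigma_{k,1})$, exactly as in the symplectic Lefschetz pencil constructions \cite{Do99,Au00,Ma09}, and then feed the resulting estimate into the gluing above.

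I expect the main obstacle to be precisely the passage to the derivative. The bundle $D^*$ carries no ample twist, hence no Gaussian peak sections with which to drive the globalization directly; this is what forces the homogenization $W_k=\sigma_{k,1}^{\,2}\,\partial_D u_k$ and the use of first order reference sections of $V\otimes L^{\otimes k}$. It is also where the bookkeeping of the extra lost derivative is essential: one needs $s_k$ asymptotically holomorphic in $C^r$ for $r$ large (not merely $C^2$), and one must propagate the estimate $|\nabla^{l-1}\bbd\sigma_{k,i}|=O(k^{-1/2})$ through the division and through $\partial_D$, so that each elementary perturbation moves $\partial_D u_k$ by an amount that is uniformly invertible at the relevant point and does not spoil asymptotic holomorphicity. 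Once this quantitative input is in place, the rest is a routine transcription of \cite{Do99,Au00} to the quasi--contact setting of \cite{Pr02,Ma09}.
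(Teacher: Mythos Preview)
Your proposal is essentially correct but follows a genuinely different route from the paper. The paper does not run the Donaldson--Auroux globalization intrinsically on $M$; instead it passes to the symplectization $S(M)=M\times[-\tau,\tau]$ with $\omega_S=d(\beta+t\alpha)$ (Subsection~2.2), invokes a relative transversality result there --- Theorem~\ref{thm:mart}, a generalization of Mohsen's Theorem~\ref{thm:moh} to the $\partial_D$--Wronskian, whose proof is delegated to \cite{Ma09} --- and then restricts to $M\times\{0\}$ using Proposition~\ref{ref:compare} to convert $\varepsilon$--transversality over $M$ into $\tfrac{\varepsilon}{2}$--transversality along $\xi$. In fact the paper explicitly lists the two available strategies (the intrinsic one of \cite{IMP99} versus the relative/symplectization one of \cite{Mo01}) and chooses the second.

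What each approach buys: the paper's route is much shorter on the page, since the hard work (first--order reference sections, the local Sard step for $\partial_D u_k$, the base--locus analysis) is encapsulated in the black box \cite{Ma09}, and the only new ingredient is the linear--algebraic Proposition~\ref{ref:compare}. Your intrinsic route is more self--contained and makes the mechanism visible --- in particular your observation that $D^*$ carries no ample twist, forcing the homogenization $W_k=\sigma_{k,1}^{\,2}\partial_D u_k$ and the use of first--order peaked sections $z^{(j)}\sref_{k,x}$, is exactly the crux that \cite{Ma09} has to deal with --- but it requires you to redo the full globalization bookkeeping and the separate base--locus argument, rather than importing them. A minor indexing slip: on your $U_0$ you want the lower bound on the \emph{denominator} component, so the defining inequality should read $|\sigma_{k,1}|\ge c\,|\sigma_k|$ (or else swap the labels of $U_0$ and $U_1$).
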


The previous results also hold for the particular setting in which $\alpha=\beta$, systematically replacing the word quasi--contact by contact. This is the formulation found in \cite{IMP99, Pr02}. However, the proofs remain practically unchanged in this more general setting. D. Mart\'inez--Torres has provided a general theory for the quasi--contact case in different articles \cite{IM04, Ma09}. In his notation the quasi--contact structures are called $2$--calibrated structures.  This Section is intended to provide a short version of \cite{Ma09} centered just in $0$ and $1$--dimensional linear systems; in that article the general theory for $r$--dimensional linear systems is worked out.  The essential problem to overcome is to obtain transversality for sequences of sections, there are two available techniques to do it:
\begin{itemize}
\item[-] The one developed in \cite{IMP99}, strictly working in the quasi--contact manifold itself.
\item[-] The relative version developed in \cite{Mo01} in which we embed the quasi--contact structure in a symplectic manifold and the transversality is achieved there.
\end{itemize}
We shall use the second alternative to deduce Theorem \ref{thm:Ber}. The definitions required for the symplectization setting are now provided.
\subsection{The symplectization of a quasi--contact structure.}
We define the symplectization of a quasi--contact structure $(M, \xi= \ker \alpha ,d\beta)$ as the symplectic manifold $S(M):= M \times [-\tau, \tau]$ with the form $\omega_S: = d(\beta +t\alpha)$, where $\tau>0$ is chosen small enough so that the form $\omega_S$ symplectic. 
We can construct the required data as restrictions of objects in the symplectization. In particular, we will need:
\begin{enumerate}
\item Any choice of compatible $J$ can be extended to an almost--complex structure $\hat{J}$ on $TS(M)= \xi \oplus R \oplus \partial_t$ by declaring $J \partial_t =R$. This almost--complex structure is compatible with $\omega_S$.
\item The associated Riemannian metric $\hat{g}=\omega_S(\cdot,\hat{J}\cdot)$ extends $g$ setting $\partial_t$ to be orthornormal to $TM \subset TS(M)$. Also, we define $\hat{g}_k=k \hat{g}$.
\item The prequantizable bundle on the quasi--contact manifold is the restriction of the prequantizable bundle over $S(M)$ defined as the trivial bundle $L=S(M) \times \C$ with connection $\nabla=d -i \hat{\beta}$, for the primitive form $\hat{\beta}= \beta+t\alpha$.
\end{enumerate}
These choices are to be assumed in the following discussion. Thus, we may use the definitions of transversality and asymptotically holomorphic sequences in the symplectic case, see \cite{Au97}. We briefly recall them; given a hermitian bundle $E$ over $S(M)$, denote $E_k:=E\otimes L^{\otimes k}$. The fundamental notion in the symplectic case is contained in the following:
\begin{definition}
A sequence of sections $s_k:S(M)\longrightarrow E_k$ is $C^r$--asymptotically holomorphic if the following estimates hold,
$$ |s_k|=O(1), \, |\nabla^l s_k|=O(1), \, |\nabla^{l-1} \bar{\partial} s_k|=O(k^{-1/2}),\quad\forall l\leq r,$$
the norms being measured with respect to the $\hat{g}_k$--metric.
\end{definition}
The splitting of the connection in holomorphic and antiholomorphic parts is a consequence of the usual decomposition $\nabla= \partial + \bar{\partial}$. The transversality condition is analogously stated as
\begin{definition}
Let $E\longrightarrow S(M)$ be a hermitian complex bundle with connection $\nabla$. A section $s: S(M)\longrightarrow E$ is said to be $\varepsilon$--transverse to zero over $M$, if $\forall x\in M \times \{ 0\}$ any of the following conditions hold:
\begin{itemize}
\item[-] $|s(x)|>\varepsilon$,
\item[-] $\nabla_{M}(s)(x): (T(M \times \{Ê0Ê\}))_x \to E_x$ is $\varepsilon$--transverse to zero.
\end{itemize}
\end{definition}
\noindent Observe that an asymptotically holomorphic sequence of sections in the symplectization $s_k: S(M)\longrightarrow E_k$ restricts to $M \times \{ 0 \}$ as an asymptotically holomorphic sequence of sections $e^*s_k$. It is less clear that transversality along the quasi--contact distribution can be also achieved. Let us prove the following
\begin{proposition} \label{ref:compare}
Let $s: S(M)\longrightarrow E$ be an $\varepsilon$--transverse section over $M$. Assume that $|\bar{\partial} s|\leq \delta$ for some $\delta>0$ small enough and depending only on $\varepsilon$. Then the restriction of the section
$$e^*s: M\longrightarrow e^* E$$ is an $\frac{\varepsilon}{2}$--transverse section along $\xi$.
\end{proposition}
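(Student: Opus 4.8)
The statement is pointwise, so fix $x\in M\times\{0\}$. If $|e^*s(x)|=|s(x)|>\frac{\varepsilon}{2}$ there is nothing to prove, so assume $|s(x)|\le\frac{\varepsilon}{2}<\varepsilon$; then the hypothesis that $s$ is $\varepsilon$-transverse over $M$ forces $\nabla_M s(x)\colon T_x(M\times\{0\})\to E_x$ to be $\varepsilon$-transverse to zero, and the task is to show that the restriction $\nabla_\xi s(x)\colon\xi_x\to E_x$ is $\frac{\varepsilon}{2}$-transverse to zero. Recall from the construction of the symplectization data that $\ker d\beta$ is the line spanned by the unit Reeb field $R$ and is $\hat g$-orthogonal to $\xi$, that $\partial_t$ is $\hat g$-orthonormal to $TM$, and that $\hat J\partial_t=R$. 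Hence $T_xM=\xi_x\oplus\R R$ is an orthogonal splitting; write $\pi\colon T_xM\to\xi_x$ for the orthogonal projection, put $e_R:=R/|R|_{\hat g_k}$ and $e_t:=\partial_t/|\partial_t|_{\hat g_k}$ (both $\hat g_k$-unit, with $\hat Je_t=e_R$ since $\hat J$ is an isometry and $|R|_{\hat g_k}=|\partial_t|_{\hat g_k}$), and set $\rho:=\nabla_{e_R}s(x)\in E_x$.

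The first part is elementary linear algebra which reduces everything to a bound on $\rho$. By Lemma \ref{lem:useful} applied to $\nabla_M s(x)$, there is a subspace $W\subset T_xM$ with $\dim_{\R}W=\rk_{\R}E$ and $|\nabla_M s(x)(w)|\ge\varepsilon|w|$ for all $w\in W$. Suppose for the moment that $|\rho|\le\frac{\varepsilon}{2}$. Then $e_R\notin W$ (else $|\rho|=|\nabla_M s(x)(e_R)|\ge\varepsilon$), so $W\cap\R R=0$, the projection $\pi|_W$ is injective, and $W':=\pi(W)\subset\xi_x$ has $\dim_{\R}W'=\rk_{\R}E$. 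Writing $w=v+c\,e_R\in W$ with $v=\pi(w)$, we have $\nabla_\xi s(x)(v)=\nabla_M s(x)(w)-c\rho$, and hence
$$|\nabla_\xi s(x)(v)|\ \ge\ \varepsilon\sqrt{|v|^2+c^2}-|c|\,|\rho|\ \ge\ \varepsilon\sqrt{|v|^2+c^2}-\frac{\varepsilon}{2}\sqrt{|v|^2+c^2}\ \ge\ \frac{\varepsilon}{2}\,|v|.$$
By Lemma \ref{lem:useful} once more, $\nabla_\xi s(x)$ is $\frac{\varepsilon}{2}$-transverse to zero, as claimed.

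It remains to prove $|\rho|\le\frac{\varepsilon}{2}$, which is the real content and where the hypothesis $|\bar\partial s|\le\delta$ is used. Since $R=\hat J\partial_t$ and $\bar\partial s$ is the $(0,1)$-part of $\nabla s$, the bound $|\bar\partial s|\le\delta$ applied in the Reeb/$\partial_t$ directions yields $|\rho-i\,\nabla_{e_t}s(x)|\le 2\delta$. On the other hand the prequantum connection form $\hat\beta=\beta+t\alpha$ annihilates $\partial_t$, so $\nabla_{\partial_t}s$ is just the plain $t$-derivative $\partial s/\partial t$; the sections to which this proposition is applied in the symplectization picture are, by construction, either pulled back from $M$ or arranged to be essentially $\partial_t$-invariant near $M\times\{0\}$, so this $t$-derivative is negligible there and $\nabla_{e_t}s(x)=O(\delta)$. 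Therefore $|\rho|\le C\delta$ for a universal constant $C$, and taking $\delta$ small in terms of $\varepsilon$ gives $|\rho|\le\frac{\varepsilon}{2}$, completing the proof.

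The crux — and the only step that is not routine — is precisely this last estimate. Forgetting the Reeb line from the domain can in general destroy $\varepsilon$-transversality of $\nabla_\xi s(x)$, and can even destroy surjectivity onto $E_x$; so one cannot argue purely formally, the Reeb-direction derivative $\rho$ really has to be shown to be small. What makes it small is the structural feature of the symplectization that the ``extra'' real direction $R$ spanning $\ker d\beta$ is $\hat J\partial_t$, i.e. the almost-complex image of the direction $\partial_t$ transverse to $M$: combined with $\hat\beta(\partial_t)=0$ and the antiholomorphicity bound $|\bar\partial s|\le\delta$, this pins $\rho$ to the order of the antiholomorphicity defect. The constant $\frac{1}{2}$ is what the bookkeeping of the second paragraph produces.
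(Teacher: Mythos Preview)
Your linear algebra in the second paragraph is correct: if $|\rho|\le\varepsilon/2$ with $\rho=\nabla_{e_R}s(x)$, then projecting the good subspace $W\subset T_xM$ onto $\xi_x$ gives $\frac{\varepsilon}{2}$--transversality along $\xi$. The problem is the third paragraph, where you claim $|\rho|\le C\delta$.

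Your argument for this is: $|\rho-i\nabla_{e_t}s(x)|\le 2\delta$ (from $|\bar\partial s|\le\delta$ and $\hat J\partial_t=R$), and then $\nabla_{e_t}s(x)=O(\delta)$ because ``the sections to which this proposition is applied \ldots\ are pulled back from $M$ or arranged to be essentially $\partial_t$--invariant''. But this last assertion is not a hypothesis of the proposition, and it is \emph{false} in the intended application. In the proof of Theorem~\ref{thm:Ber} the sections $\sigma_k$ are produced by Mohsen's Theorem~\ref{thm:moh} on the symplectization $S(M)$; they are general asymptotically holomorphic sections on $S(M)$, not pull--backs from $M$, and the only available bound is $|\nabla\sigma_k|=O(1)$ in the $\hat g_k$--norm. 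So $\nabla_{e_t}s(x)$ is $O(1)$, not $O(\delta)$, and you get no useful bound on $\rho$. Indeed, under the stated hypotheses alone the Reeb--direction derivative can be of order $1$: a section constant along $M$ but varying in $t$ shows this immediately.

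The paper's proof does not attempt to bound $\rho$ at all. Instead it passes to the complex--linear part $f=\partial s(x)$ (which is $\varepsilon'$--transverse on $T_xM$ for $\delta$ small, with $\varepsilon'=\frac34\varepsilon$), finds the good $2r$--dimensional subspace $W\subset\xi_x\oplus\langle R\rangle$, and if $W\not\subset\xi_x$ takes an $r$--dimensional isotropic $U\subset W\cap\xi_x$ and replaces $W$ by $U\oplus JU\subset\xi_x$. The point is that $\xi_x$ is $J$--invariant and $f$ is $\C$--linear on it, so the estimate on $U$ transports to $JU$; the Reeb direction is discarded and \emph{substituted} by a $J$--rotate of something already in $\xi$, rather than shown to contribute negligibly. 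That is the idea you are missing.
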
 
\begin{proof}
This is essentially a linear algebra question. Let $2r=rk(E)$ and fix a point $x\in M$ for which $|s(x)|\leq \varepsilon$, then the linear map $$\nabla_{M} s(x): \xi_x \oplus \langle R \rangle \to E_x$$ is $\varepsilon$--transverse to zero. Define $\varepsilon'=\frac{3}{4}\varepsilon$. Let $\delta>0$ be small enough such that $f=\partial s(x)$ is $\varepsilon'$--transverse to zero. Thus, there exists a right inverse of norm smaller than $(\varepsilon')^{-1}$. By Lemma \ref{lem:useful}, this implies that there exists a $2r$--dimensional subspace $W\subset \xi_x \oplus \langle R \rangle$ such that for any $w\in W$,
$$ |f(w)| \geq \varepsilon'|w|.$$
If $W \subset \xi_x$ we are done. Otherwise, define $V= W \cap \xi_x$ and let $U\subset V$ be a Lagrangian $r$--dimensional subspace. Consider $\hat{U}=f(U)$, then we obtain the splitting $E_x = \hat{U} \oplus i\hat{U}$. The map $f$ restricted to the subspace $U_{\C}=U \oplus JU \subset \xi$ satisfies
$$|f(u)|^2= |f(u_1)+if(u_2)|^2 = |f(u_1)|^2+|f(u_2)|^2 \geq (\varepsilon')^2|u|^2,$$
where $u=u_1 +Ju_2\in U_{\C}$. Therefore $f_{|\xi}$ is $\varepsilon'$--transverse to zero. Again, for fixed $\delta>0$ small enough, the linear map $\nabla_{\xi} s(x)$ is $\frac12 \varepsilon$--transverse to zero.
\end{proof}
\noindent Since we may achieve transversality with respect to the distribution, to conclude the proof of Theorem \ref{thm:Ber} we must ensure the existence of uniformly transverse asymptotically holomorphic sections. For that purpose, we refer to the main result in \cite{Mo01}:
\begin{theorem} $($Mohsen$)$ \label{thm:moh}
Let $(M, \omega)$ be a symplectic manifold of integer class. Fix a compatible almost complex structure $J$, a closed submanifold $S$ and a hermitian vector bundle $E$. Then for any asymptotically holomorphic sequence of sections $s_k: M\longrightarrow E\otimes L^{\otimes k}$ and for any $\delta>0$, there exists a $C^3$--asymptotically holomorphic sequence of sections $\sigma_k:M \to E\otimes L^{\otimes k}$ satisfying
\begin{enumerate}
\item[-] $|\sigma_k-s_k|_{C^2} \leq \delta$,
\item[-] The sequence $\sigma_k$ is $\varepsilon$--transverse to zero in $M$, for some uniform constant $\varepsilon>0$ not depending on $k$.
\end{enumerate}
\end{theorem}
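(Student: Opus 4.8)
The statement we want is the \emph{estimated transversality theorem}: when $S=M$ it is Donaldson's foundational result \cite{Do96}, and the refinement in which $\sigma_k$ is additionally made uniformly transverse to zero \emph{over} the closed submanifold $S$ — the form in which it is applied here, with $S=M\times\{0\}\subset S(M)$, so that the hypothesis of Proposition \ref{ref:compare} is met — is the content of Mohsen's thesis \cite{Mo01}. The strategy is Donaldson's: reduce the whole assertion to a \emph{local} quantitative transversality lemma for approximately holomorphic maps on a ball, then glue the local perturbations over a covering of $M$ by balls of bounded $g_k$--radius, controlling at every gluing step how much the transversality constant degrades so that the final constant does not depend on $k$. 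The building blocks are Donaldson's concentrated (Gaussian ``peak'') sections: for each $x\in M$ there is an asymptotically holomorphic section $\sref_{k,x}$ of $L^{\otimes k}$ with $|\sref_{k,x}|\geq c_0$ and $|\partial\sref_{k,x}|\geq c_0$ on the $g_k$--unit ball about $x$, with Gaussian decay $|\sref_{k,x}(y)|\leq P(d_k(x,y))\,e^{-c\,d_k(x,y)^2}$ ($d_k$ the $g_k$--distance, $P$ a fixed polynomial) and $|\bbd\sref_{k,x}|=O(k^{-1/2})$, together with the analogous bounds on derivatives. Trivialising $E$ near $x$ by a bounded approximately holomorphic frame and rescaling the $g_k$--ball to unit size turns $s_k$ into an approximately holomorphic map $f\colon B^{+}\subset\C^{n}\to\C^{r}$ ($r$ the complex rank of $E$) with uniform $C^l$--bounds and $|\bbd f|=O(k^{-1/2})$; crucially, adding to $s_k$ a combination $\sum_j w_j\,e_j\,\sref_{k,x}$ of concentrated sections corresponds, on this ball, to replacing $f$ by the affine translate $z\mapsto f(z)-w$, $w\in\C^{r}$, which is exactly the freedom a local Sard argument needs.

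The local input is Donaldson's quantitative transversality lemma \cite{Do96}, in the sharp form of Auroux \cite{Au97}: there is an exponent $p=p(n,r)$ such that for every such $f$ and every small $\delta>0$ there is $w\in\C^{r}$ with $|w|\leq\delta$ for which $f-w$ is $\eta$--transverse to zero on the concentric ball of half radius, with $\eta=\delta\,(\log\delta^{-1})^{-p}$. For $r=1$ this is a quantitative Sard theorem: on the locus where $|\partial f|$ is small, $f$ is holomorphic up to an $O(k^{-1/2})$ error with small derivative, so its image has small measure, and a pigeonhole over the dyadic scales between $\delta$ and a fixed constant produces a good $w$ with the stated logarithmic loss; for general $r$ one first makes the last coordinate $\eta_1$--transverse, observes that it cuts out an approximately holomorphic hypersurface near its zero set, restricts to it and inducts on $r-1$ variables. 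For the \emph{relative} statement the same lemma is applied to $f|_{S}$: one works in approximately holomorphic coordinates centred at points of $S$ and \emph{adapted to $S$}, so that $S$ becomes a fixed linear subspace up to $O(k^{-1/2})$ errors at $g_k$--scale; then $f|_{S}$ is still approximately holomorphic for the induced (approximately integrable) structure and the lemma applies to it verbatim.

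The globalization is the standard one. Pick a $g_k$--$\tfrac13$--net in $M$ and the associated balls $B_i$ of fixed $g_k$--radius — there are $O(k^{n})$ of them and every point lies in boundedly many — and colour them with a number $N_0=N_0(\dim M)$ of colours, independent of $k$, so that balls of the same colour are $g_k$--distance $\geq D$ apart, $D$ a large constant. Process the colours one by one: having produced $\sigma^{(j-1)}_k$, which is $\varepsilon_{j-1}$--transverse over the balls of colours $<j$ and differs from $s_k$ by at most the fraction of $\delta$ allotted to the first $j-1$ steps, apply the local lemma simultaneously inside every ball of colour $j$, realising each perturbation — taken of size comparable to $\varepsilon_{j-1}$ so as not to spoil the transversality already gained — by the corresponding concentrated sections. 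Because these decay like a Gaussian and the same--colour balls are $D$--separated, the perturbation meant for one ball affects the transversality near another by only $O(e^{-cD^2})$, negligible once $D$ is large; combined with the $C^1$--stability of estimated transversality recorded after Lemma \ref{lem:subquasi}, this yields a recursion $\varepsilon_j\geq h(\varepsilon_{j-1})-Ce^{-cD^2}$ for an explicit $h$ with $h(t)>0$ for $t>0$ (the logarithmic loss of the local lemma), while the cumulative $C^{2}$--perturbation stays $\leq\delta$. After $N_0$ steps one obtains $\sigma_k$ with $|\sigma_k-s_k|_{C^2}\leq\delta$ that is $\varepsilon$--transverse to zero over $S$ (over all of $M$ when $S=M$) for a constant $\varepsilon=\varepsilon(\delta,N_0,p)>0$ \emph{independent of $k$}; since each reference section is itself asymptotically holomorphic, the $C^3$ and $\bbd$--estimates survive every step, so $\sigma_k$ is $C^3$--asymptotically holomorphic.

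The main obstacle is the relative part, which is Mohsen's genuine contribution. When $S$ is not $J$--holomorphic — exactly the case of $M\times\{0\}$ inside its symplectization, where $\hat J$ sends the Reeb direction out of $TS$ — there are no coordinates in which $J$ is standard and $S$ is simultaneously linear, so the concentrated sections and all the $C^l$--bounds must be set up in the approximately holomorphic, $S$--adapted coordinates above; one must then check both that the restriction to $S$ of such a model section is a genuine approximately holomorphic function on $S$ for the induced structure (so the local lemma applies to it), and that a $C^2$--small approximately holomorphic perturbation defined near a patch of $S$ extends to a $C^2$--small approximately holomorphic perturbation of $s_k$ on the ambient $M$ without destroying the transversality already achieved over the other patches. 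Once these compatibility issues are settled, the remainder is a careful but essentially routine rerun of the estimates of \cite{Do96} and \cite{Au97}.
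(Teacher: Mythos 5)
The paper does not actually prove this statement: it is quoted as the main result of Mohsen's thesis \cite{Mo01} and used as a black box to deduce Theorem \ref{thm:Ber}, so there is no in-paper argument to compare yours against. Your sketch is a faithful outline of the proof that does exist in the literature --- Donaldson's local quantitative transversality lemma in Auroux's sharpened form, realised by Gaussian peak sections and globalised by the colouring recursion, with Mohsen's genuine contribution being exactly the relative set-up you describe: $S$--adapted approximately holomorphic coordinates for a submanifold that need not be $\hat{J}$--invariant, which is the case for $M\times\{0\}\subset S(M)$ since $\hat{J}R=-\partial_t\notin TS$. The two places where your account is thinnest are the ones you flag yourself (that the restriction of the model data to $S$ stays approximately holomorphic for the induced structure, and that the number of colours, which depends on the separation $D$, does not feed back into the recursion in a $k$--dependent way); both are settled in \cite{Mo01} and \cite{Au97}, and neither affects the correctness of the outline.
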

\noindent It is now immediate to conclude the existence of transverse asymptotically holomorphic sections:\\

\noindent {\bf Proof of Theorem \ref{thm:Ber}.}
Let us describe the elements appearing in the hypothesis of Theorem \ref{thm:moh}. The symplectization $(S(M), d(\beta+t\alpha))$ will be the symplectic manifold. The submanifold will be the quasi--contact manifold, hence $S=M \times \{ 0 \}$. Finally, pull--back the vector bundle $E\longrightarrow M$ to a bundle in $S(M)$, still denoted $E$.  As a consequence of the theorem applied to the constant sequence $s_k=0$ we obtain a $C^2$--small sequence $\sigma_k: S(M) \longrightarrow E\otimes L^{\otimes k}$ which is $\varepsilon$--transverse to zero in $M$. After Proposition \ref{ref:compare} the sequence $e^* \sigma_k:M \to E\otimes L^{\otimes k}$ is $\frac12\varepsilon$--transverse to $\xi$.
\hfill $\Box$\\

\noindent To conclude Theorem \ref{thm:Ber2} we have to slightly generalize Theorem \ref{thm:moh} to allow certain control for the derivative of the quotient along the complex distribution. The precise statement we require is the following
\begin{theorem}\label{thm:mart}
Let $(M, \omega)$ be a symplectic manifold of integer class. Fix a compatible almost complex structure $J$, a closed submanifold $S$, a hermitian line bundle $V$ and a complex distribution $D \subset TS$ over the submanifold. Then, for any asymptotically holomorphic sequence of sections $s_k: M\longrightarrow V\otimes L^{\otimes k} \otimes \C^2 $ and for any $\delta>0$, there exists an asymptotically holomorphic sequence of sections $\sigma_k=(\sigma_{k,0}, \sigma_{k,1}):M \longrightarrow V\otimes L^{\otimes k} \otimes \C^2 $ satisfying
\begin{itemize}
\item[-] $|\sigma_k-s_k|_{C^2} \leq \delta$,
\item[-] $\partial_D \sigma_{k,0} \otimes \sigma_{k,1} - \sigma_{k,0} \otimes \partial_D \sigma_{k,1}$ is $\varepsilon$-transverse to zero in $M$, for some uniform constant $\varepsilon>0$ not depending on $k$.
\end{itemize}
\end{theorem}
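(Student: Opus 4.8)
The plan is to deduce Theorem~\ref{thm:mart} from Theorem~\ref{thm:moh} (Mohsen's theorem) by recasting the quantity $\partial_D \sigma_{k,0} \otimes \sigma_{k,1} - \sigma_{k,0}\otimes\partial_D\sigma_{k,1}$ as a section to which the Donaldson--Auroux globalization machinery, as packaged by Mohsen, applies. First I would observe that, since $D\subset TS$ is a fixed complex distribution over $S$ of some rank $m$, the operator $\partial_D$ sends a $C^r$--asymptotically holomorphic sequence of sections of $V\otimes L^{\otimes k}\otimes\C^2$ to a $C^{r-1}$--asymptotically holomorphic sequence of sections of $D^*\otimes V\otimes L^{\otimes k}\otimes\C^2$; this was already noted in the excerpt. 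The target bundle for the relevant expression is therefore $W_k := D^*\otimes V^{\otimes 2}\otimes L^{\otimes 2k}$, and one should think of $\Phi(\sigma_k) := \partial_D\sigma_{k,0}\otimes\sigma_{k,1} - \sigma_{k,0}\otimes\partial_D\sigma_{k,1}$ as a nonlinear differential operator of the sections $\sigma_{k,0},\sigma_{k,1}$, which is ``asymptotically holomorphic'' in an appropriate sense and depends on the $1$--jet of $\sigma_k$.

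The key step is a local transversality estimate of Donaldson type for $\Phi$: one shows that, given a point $x\in S$, after rescaling to the $\hat g_k$--metric the pair $(\sigma_{k,0},\sigma_{k,1})$ looks, in a Darboux chart, like a pair of approximately holomorphic functions on a ball in $\C^n$, and $\Phi$ becomes essentially the Wronskian-type expression $\partial_D f_0 \cdot g_1 - f_0\cdot \partial_D g_1$ in these coordinates. The crucial algebraic fact is that this expression, viewed as a section of a line bundle, still admits localized approximately holomorphic perturbations with polynomial control: concretely, near a point where the section is small one can add a small multiple of a reference section $s^{\text{ref}}_{k,x}$ (a Gaussian bump normalized by the pre-quantization bundle) to one of the two components $\sigma_{k,j}$, and estimate how $\Phi$ changes. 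One then runs the standard inductive argument on a net of points covering $M$ with the quantitative Sard-type lemma (the main technical lemma of Donaldson, in Auroux's or Mohsen's formulation) to achieve a uniform $\varepsilon$ over the whole of $M\times\{0\}=S$. Here is where one must be careful that perturbing \emph{one} component does not uncontrollably spoil the transversality already achieved from earlier points; the usual geometric-series choice of perturbation sizes, together with the $C^1$--stability of $\varepsilon$--transversality noted after Lemma~\ref{lem:subquasi}, handles this.

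I expect the main obstacle to be exactly this last point: $\Phi$ is \emph{quadratic} in $\sigma_k$ (it is the determinant of a $2\times 2$ block of the $1$--jet), so it is not a section of a fixed bundle depending linearly on the perturbed data, and the perturbation in $\sigma_{k,0}$ versus $\sigma_{k,1}$ enters asymmetrically. The resolution is the standard trick in the Donaldson program for Lefschetz pencils (Auroux, \cite{Au97}, and Mohsen \cite{Mo01}): one fixes $\sigma_{k,1}$ first, making it $\varepsilon_1$--transverse to zero by Theorem~\ref{thm:moh}, so that on the region $\{|\sigma_{k,1}|<\varepsilon_1\}$ the map $\nabla\sigma_{k,1}$ is uniformly surjective and one can use $\sigma_{k,1}$ itself as part of a local frame; then on the complementary region $\{|\sigma_{k,1}|\ge\varepsilon_1\}$, the expression $\Phi$ divided by $\sigma_{k,1}$ is (up to controlled error) $\partial_D\sigma_{k,0} - \sigma_{k,0}\,\sigma_{k,1}^{-1}\partial_D\sigma_{k,1}$, which is \emph{affine} in $\sigma_{k,0}$ and its $D$--derivative, so the linear Donaldson argument applies to perturb $\sigma_{k,0}$ alone. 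Patching the two regions with a partition of unity adapted to the scale $k^{-1/2}$, and absorbing all error terms into the $O(k^{-1/2})$ antiholomorphicity budget, completes the proof; Theorem~\ref{thm:Ber2} then follows from Theorem~\ref{thm:mart} by the same symplectization-and-restriction argument used in the proof of Theorem~\ref{thm:Ber}, via Proposition~\ref{ref:compare}.
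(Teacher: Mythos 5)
First, a point of comparison: the paper does not actually prove Theorem \ref{thm:mart}; it states it and immediately defers to \cite{Ma09} (``the proof of this Theorem, in a more general case, can be found in \cite{Ma09}''), so your proposal is being measured against the literature rather than against an argument in the text. That said, your outline does reproduce the standard strategy of Donaldson, Auroux and Mohsen: view the Wronskian $\Phi=\partial_D\sigma_{k,0}\otimes\sigma_{k,1}-\sigma_{k,0}\otimes\partial_D\sigma_{k,1}$ as a $1$--jet quantity, reduce to an affine perturbation problem on the region where the component one divides by is bounded below, perturb by reference sections multiplied by approximately holomorphic coordinates, and globalize with the quantitative Sard lemma. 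The reduction on $\{|\sigma_{k,1}|\geq\varepsilon_1\}$, where $\Phi/\sigma_{k,1}=\partial_D\sigma_{k,0}-\sigma_{k,0}\,\sigma_{k,1}^{-1}\partial_D\sigma_{k,1}$ is affine in the $1$--jet of $\sigma_{k,0}$, is exactly the right move, and your last sentence (deduce Theorem \ref{thm:Ber2} by symplectization plus Proposition \ref{ref:compare}) matches the paper.

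The gap is the complementary region. On $\{|\sigma_{k,1}|<\varepsilon_1\}$ you only say that $\nabla\sigma_{k,1}$ is surjective and that $\sigma_{k,1}$ can be ``used as part of a local frame''; that does not yield estimated transversality of $\Phi$ there. The genuinely problematic locus is where \emph{both} components are small, i.e.\ near $Z(\sigma_{k,0})\cap Z(\sigma_{k,1})$. At such a point $\Phi=0$ and $\nabla_v\Phi=(\nabla_v\sigma_{k,1})\,\partial_D\sigma_{k,0}-(\nabla_v\sigma_{k,0})\,\partial_D\sigma_{k,1}$, whose image lies in the complex $2$--plane spanned by $\partial_D\sigma_{k,0}$ and $\partial_D\sigma_{k,1}$ inside the rank--$(\rk_{\C}D)$ target; so for $\rk_{\C}D\geq 3$ no perturbation whatsoever makes $\Phi$ uniformly transverse at base points, and even for small rank the problem there is quadratic rather than affine, so the $C^1$--stability and geometric--series bookkeeping you invoke do not apply. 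This is precisely why the references establish, and Proposition \ref{propo:nice} only uses, transversality of $\partial_D(\sigma_{k,1}/\sigma_{k,0})$ over $\{|\sigma_{k,0}|\geq\eta\}$ (equivalently, in the language of \cite{Au00,Ma09}, transversality of the $1$--jet of the projective map to a stratification, with the base locus handled separately via the transversality of the pair and the local model of Lemma \ref{positi}). To make your argument correct you should restrict the transversality claim for $\Phi$ to the region where $|\sigma_{k,0}|$ is bounded below --- which is all that is needed downstream --- or invoke the stratified jet--transversality theorem of \cite{Ma09} outright, as the paper in effect does.
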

\noindent The proof of this Theorem, in a more general case, can be found in \cite{Ma09}. Finally, Theorem \ref{thm:Ber2} is an immediate consequence of this result. \\

\section{Open books in contact geometry}\label{sect:ob}
\noindent This section develops the open book decomposition mentioned in Section \ref{sect:introduction}. We begin with the main definition:
\begin{definition}
Let $M$ be a smooth closed manifold. A pair of objects $(B, \pi)$ is called an open book decomposition if they satisfy:
\begin{itemize}
\item[-] $B$ is a codimension--$2$ closed submanifold.
\item[-] $\pi: M\setminus B \to S^1$ is a submersion.
\item[-] The normal bundle of $B$ is trivial and there exists a tubular neighborhood $U$ with a trivializing diffeomorphism $\phi: B \times B^2(\delta) \to U$ such that
$$(\pi \circ \phi)(p, r, \theta)= \theta,$$
where $p\in B$ and $(r,\theta)$ are polar coordinates in $B^2(\delta)$.
\end{itemize}
The divisor $B$ is referred as the binding. The closure of the fibers of $\pi$ in $M$ are called the pages of the open book $(B,\pi)$.
\end{definition}

\begin{figure}[ht]
\includegraphics[scale=0.42]{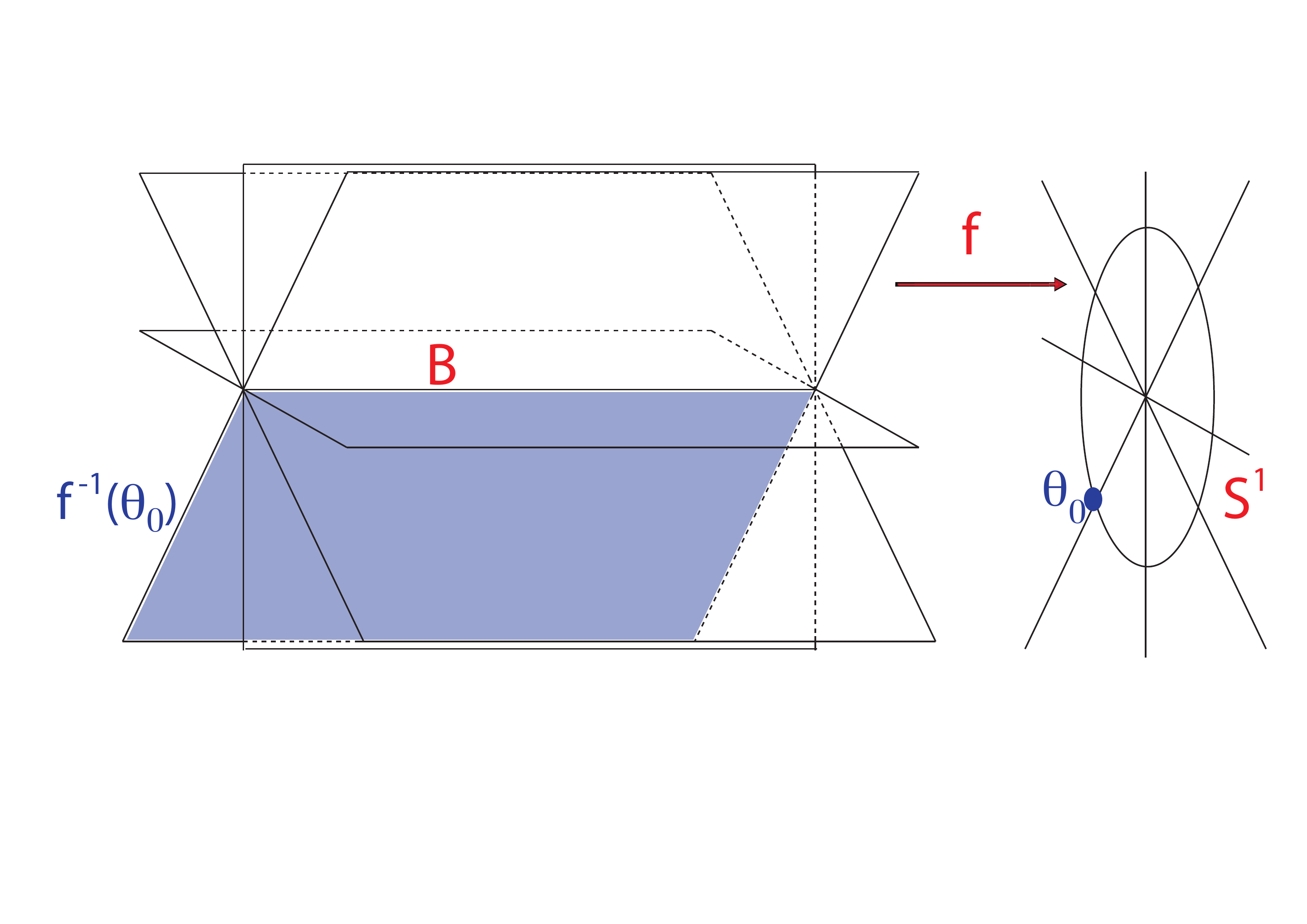}
\caption{Open book close to the binding.}
\end{figure}

Let us describe an equivalent construction. Consider a smooth manifold $P$ with boundary $B=\partial P$, and $\Psi:P \to P$ a diffeomorphism restricting to the identity close to the boundary. We then construct a closed manifold $M$ from the pair $(P,\Psi)$: as a topological space $M= (P \times [0,1])/\sim$ where the equivalence relation is defined as
$$ (p,t) \sim (q,s) \Longleftrightarrow \left\{ \begin{array}{l} p=q \in B, \\ {\rm or} \\ p=\Psi(q)\mbox{ and } t=0, s=1. \end{array}  \right. $$
This produces a manifold. Indeed, before quotienting it is certainly a manifold and then the quotient can be understood as a two--step process. In the first step $P \times \{ 0 \}$ and $P \times \{ 1 \}$ are identified by means of the diffeomorphism $\Psi$ to produce a manifold $P_\Psi$ that fibers over $S^1$, its boundary is diffeomorphic to $B \times S^1$. Secondly, in order to obtain the collapse from the first condition in the equivalence relation, we fill the boundary of $P_\Psi$ with $B\times D^2$. This produces a smooth manifold $M$ without boundary. Define the map
\begin{eqnarray*}
\pi: ((P \times [0,1])/\sim) \setminus B & \longrightarrow & S^1 \\
(p,t) & \longmapsto & t.
\end{eqnarray*}
Then $(B,\pi)$ is an open book decomposition of $M$ with pages diffeomorphic to $P$. Conversely, given an open book decomposition $(B, \pi)$ of a manifold $M$, we may recover $P=\overline{\pi^{-1}(0)}$ and $\Psi$. For the diffeomorphism, consider a connection for the fibration $\pi:M \setminus B \to S^1$, thus providing a notion of parallel transport, and then $\Psi\in\Diff(P)$ is obtained as the time--$1$ flow of the lifting of $\partial_t$ with respect to the chosen connection. Hence, we can define an open book decomposition either by providing the pair $(B, \pi)$ or the pair $(P, \Psi)$.\\

\noindent The notion of an open book decomposition is essentially topological. We now follow E. Giroux \cite{Gi02} to relate it with contact geometry. Given a contact form $\alpha$ for a contact structure, let $R=R_\alpha$ be the unique vector field such that $d\alpha(R,\cdot)=0$ and $\alpha(R)=1$.
This is called the Reeb vector field of $\alpha$. The interaction between contact geometry and open book decompositions is based on the following
\begin{definition}
Let $(M,\xi)$ be a contact manifold. A contact form $\alpha$ supports an open book decomposition $(B, \pi)$ if:
\begin{enumerate}
\item[-] $(B, \alpha_B=\alpha_{|B})$ is a contact submanifold.
\item[-] The Reeb vector field $R$ is positively transverse to the projection $\pi$, i.e. $d\pi(R) >0$ everywhere, and tangent to the submanifold $B$.
\end{enumerate}
\end{definition}
\noindent Given a fixed contact structure and a supporting contact form, the open book is said to be adapted to the contact structure through the contact form. The open book is called adapted to a contact structure if it is adapted through a contact form inducing the given contact structure. The requirements in the definition have the following implications:
\begin{itemize}
\item[-] The pages $P_t= \overline{\pi^{-1}(t)}$ inherit an exact symplectic structure provided by the restriction of $d\alpha$.
\item[-] The associated flow $\Psi$ is a symplectomorphism since the generating vector field is of the form $X=f\cdot R$ and $\SL_R d\alpha =0$.
\item[-] The boundary of any page is $\partial P_t=B$, and it is of convex type with respect to the symplectic structure $(d\alpha)|_{P_t}$.
\end{itemize}
Recall that an exact symplectic manifold $(M, \omega=d \alpha)$ has a boundary of convex type with respect to the Liouville form $\alpha$ if the associated Liouville vector field $X$, defined by $\alpha= i_X d \alpha$, is tranverse to the boundary of $M$ and points outwards. Convexity is a relevant property in procedures such as gluing or filling constructions and has a fundamental role in the understanding of Conjecture \ref{conj:a}. The first two assertions are readily seen to hold, let us detail the third statement:
\begin{lemma} \label{lem:special}
Let $(B,\pi)$ be an open book decomposition adapted to $(M,\ker\alpha)$. There exists a neighborhood $U$ of the binding $B$ and a trivializing diffeomorphism $\psi: B \times B^2(\delta) \to M$ such that
$$\psi^*\alpha= g\cdot(\alpha_{|B} + r^2d\theta),$$
where $g:B \times B^2(\delta) \longrightarrow \R^+$ satisfies $\partial_r g<0$ for $r>0$.
\end{lemma}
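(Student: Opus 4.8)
The plan is to first put $\xi=\ker\alpha$ into a Lutz--Martinet type normal form in a tubular neighbourhood of the binding $B$ that is \emph{simultaneously} compatible with the fibration $\pi$, and then to read off the sign of $\partial_r g$ from the Reeb field of the resulting model form together with the supporting hypothesis.

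For the normal form I would argue as follows. By the open book axioms $B$ has trivial normal bundle, and as a codimension-$2$ contact submanifold with induced form $\alpha_{|B}$ its conformal symplectic normal bundle is trivial as well; the contact neighbourhood theorem for contact submanifolds then provides a tubular neighbourhood $U$ and a trivialisation $\psi_0\colon B\times B^2(\delta)\to U$, equal to the inclusion on $B\times\{0\}$, with $\psi_0^*\xi=\ker(\alpha_{|B}+r^2\,d\theta)$; equivalently $\psi_0^*\alpha=g_0\cdot(\alpha_{|B}+r^2\,d\theta)$ for a positive function $g_0$. It remains to arrange $\pi\circ\psi=\theta$. Here I would compare $\psi_0$ with the open book trivialisation $\phi$ of the definition, for which $\pi\circ\phi=\theta$: they differ by a diffeomorphism of $B\times B^2(\delta)$ fixing $B\times\{0\}$, and I would produce, by a Moser-type argument, a diffeomorphism $G$ of $B\times B^2(\delta)$ fixing $B\times\{0\}$ which preserves $\ker(\alpha_{|B}+r^2\,d\theta)$ and carries the angular fibration $\theta$ onto $\pi\circ\psi_0$; then $\psi:=\psi_0\circ G$ satisfies $\psi^*\xi=\ker(\alpha_{|B}+r^2\,d\theta)$ and $\pi\circ\psi=\theta$, so $\psi^*\alpha=g\cdot(\alpha_{|B}+r^2\,d\theta)$ for a new positive $g$. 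The supporting condition $d\pi(R)>0$ enters precisely here: it ensures $\pi\circ\psi_0$ is itself a supported open book for the model distribution, which is the convexity that makes the straightening Moser argument go through.

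Granting $\psi^*\alpha=g\beta_0$ with $\beta_0:=\alpha_{|B}+r^2\,d\theta$ and $\pi\circ\psi=\theta$, the rest is a short computation on $B\times B^2(\delta)$. The Reeb field of $\beta_0$ is the trivial extension $R_0$ of the Reeb field of $\alpha_{|B}$; writing the Reeb field of $g\beta_0$ as $R_g=\tfrac1g(R_0+Z)$ with $Z\in\ker\beta_0$, the defining equations $g\beta_0(R_g)=1$, $i_{R_g}d(g\beta_0)=0$ reduce to $i_Z\,d\beta_0=\tfrac1g\bigl(dg-dg(R_0)\,\beta_0\bigr)$, and contracting this identity with $\partial_r$, using $d\beta_0=d\alpha_{|B}+2r\,dr\wedge d\theta$ and $\beta_0(\partial_r)=0$, yields $-2r\,d\theta(Z)=\tfrac1g\,\partial_r g$, hence
\[
d\theta(R_g)=\tfrac1g\,d\theta(Z)=-\frac{\partial_r g}{2\,r\,g^{2}}.
\]
Since $\psi$ carries the Reeb field $R$ of $\alpha$ to $R_g$ and $\pi$ to $\theta$, the hypothesis $d\pi(R)>0$ for $r>0$ becomes exactly $\partial_r g<0$ for $r>0$, which is the assertion. (The tangency of $R$ to $B$ translates, by the same bookkeeping contracted with $\partial_x,\partial_y$, into $\partial_x g=\partial_y g=0$ along $r=0$, so $\partial_r g$ vanishes to first order at the binding and $g$, $d\theta(R_g)$ extend smoothly across it.) Equivalently one can bypass the Reeb field: on a page $\{\theta=\mathrm{const}\}$ one has $\psi^*\alpha=g\cdot\alpha_{|B}$, and a direct computation identifies the associated Liouville vector field with $\frac{g}{\partial_r g}\,\partial_r$, which points outward along the page boundary $\{r=0\}$ precisely when $\partial_r g<0$, recovering both the lemma and the convexity statement preceding it.

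The main obstacle is the normal-form step: producing a single trivialisation $\psi$ that at once brings $\xi$ to $\ker(\alpha_{|B}+r^2\,d\theta)$ and straightens $\pi$ to $\theta$. The contact neighbourhood theorem and the topological open book chart each accomplish one half, and reconciling them is the relative (supported) binding-neighbourhood statement; once that is in place, the Reeb computation above is routine.
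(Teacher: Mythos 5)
Your endgame — the Reeb-field computation showing that $d\pi(R)>0$ translates, in a chart where $\psi^*\alpha=g\cdot(\alpha_{|B}+r^2d\theta)$ and $\pi\circ\psi=\theta$, into $\partial_r g<0$ — is correct and is essentially the computation the paper performs. The gap is exactly where you locate it: the normal-form step. You obtain the contact normal form from the contact neighbourhood theorem and then assert that a ``Moser-type argument'' produces a contactomorphism $G$ of the model, fixing $B\times\{0\}$, carrying the angular fibration $\theta$ onto $\pi\circ\psi_0$. That straightening statement is not a routine Moser application and is never carried out; it is essentially a uniqueness-near-the-binding result for supported open books with fixed binding, and the natural way to prove it is to first put the supported form into the very normal form the lemma asserts — so as written the argument risks circularity. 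Concretely, to run a Moser/Gray-type isotopy between the two angle functions you would need to know that the pages of $\pi\circ\psi_0$ sit graphically over the model pages $\{\theta=\mathrm{const}\}$ and that the interpolating family still consists of fibrations supported by $\ker(\alpha_{|B}+r^2d\theta)$; the hypothesis $d\pi(R)>0$ alone does not hand you this without further work.

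The paper avoids the problem by reversing the order of operations: it starts from the open-book chart $\phi$ of the definition (so $\pi\circ\phi=\theta$ holds from the outset) and then improves the \emph{contact} data without touching the angular coordinate. Namely, $\pi_U=\phi^{-1}\circ\pi_2:U\to B^2(\delta)$ is a contact fibration; its contact connection (horizontal space $=$ the $d\alpha$-symplectic orthogonal of $\xi_B$ inside $\xi$) gives a radial parallel transport by contactomorphisms, which retrivializes the neighbourhood so that $\widetilde\Phi^*\alpha=\widetilde g\cdot(\alpha_B+rH\,d\theta)$, and a radial reparametrization $(p,r,\theta)\mapsto(p,H,\theta)$ converts $rH$ into $r^2$. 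Every step preserves $\theta$, so compatibility with $\pi$ is automatic and only the sign computation remains. If you want to keep your two-chart strategy, you must supply the straightening lemma in full; otherwise the contact-connection construction is the shorter and safer route.
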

\begin{proof}
Consider the trivializing map $\phi: B \times B^2(\delta) \to U$ provided by the definition of an open book decomposition. Note that for $\delta>0$ small enough the fibers are contact submanifolds. Let $\pi_2: B\times B^2(\delta) \longrightarrow B^2(\delta)$ be the projection onto the second factor, then the projection
$$\pi_U= \phi^{-1} \circ \pi_2: U \to B^2(\delta)$$
is a contact fibration in the sense of \cite{Pr07}. As such, there is an associated contact connection. Certainly, at a point $p\in U$ the vertical subspace is $V_p= \ker d\pi_U(p)$. Since the fiber is a contact submanifold, $(\xi_B)_p=V_p \cap \xi_p$  is a symplectic subspace of $(\xi_p, d\alpha_p)$ and therefore we may define the horizontal subspace as the symplectic orthogonal $H_p= (\xi_B)_p^{\perp d\alpha_p}$. This defines a contact connection for the contact fibration. In particular, the induced parallel transport is by contactomorphisms. We use this connection to suitably trivialize the fibration $\pi_U$. Lifting the radial vector field $r\partial_r$ on the disk provides a flow on $U$: the associated contactomorphism from the central fiber $\pi_U^{-1}(0)$ to the general fiber $\pi_U^{-1}(r,\theta)$ will be denoted by $\Phi_{(r,\theta)}$. The appropriate trivialization is provided by the contactomorphism
\begin{eqnarray*}
\Phi: B \times B^2(\delta) & \longrightarrow & B \times B^2(\delta) \\
(p,r, \theta) & \longmapsto & (\Phi_{(r,\theta)}(p),r, \theta).
\end{eqnarray*}
The composition $\widetilde{\Phi}=\Phi \circ \phi$ satisfies
$$\widetilde{\Phi}^*\alpha = \widetilde{g} \cdot (\alpha_B +rH(p,r,\theta)d\theta),$$
where $\widetilde{g}$ is a strictly positive smooth function and $H$ is a function with the following properties:
\begin{enumerate}
\item[-] The identity being induced in the central fiber, $H(p,0,0)=0$.
\item[-] After the contact condition, $\partial_r (rH)>0$ in $r>0$.
\item[-] It achieves a radial minimum in the central fiber, $\partial_r H(p,0,0)>0$.
\end{enumerate}
In order to suppress the $H$ factor we further compose with
\begin{eqnarray*}
f: B \times B^2(\delta) & \longrightarrow & B \times B^2(\delta) \\
(p,r, \theta) & \longmapsto & (p,H, \theta),
\end{eqnarray*}
which is injective for $r$ small enough. We then obtain the diffeomorphism
$$\psi=\widetilde{\Phi} \circ f: B \times B^2(\delta) \to U$$ satisfying $\psi^*\alpha= g\cdot(\alpha_{|B} + r^2d\theta)$,
for some positive function $g$. Denote this form by $\alpha_g$, it remains to verify that the radial derivative of $g$ is negative. Let us express this in terms of the Reeb vector fields. Note that the open book map restricts as $(\pi \circ \psi)(p, r, \theta)= \theta$ and thus $R_\alpha$ satisfies
\begin{equation}\partial_\theta(\psi^* R_\alpha)>0,\quad \mbox{for }r>0\label{eq:goodR},
\end{equation}
since the set $\{r=0 \}$ is the binding $B$ in these coordinates. This condition implies $\partial_rg<0$. Indeed, decompose the Reeb vector field $R_g$ of $\alpha_g$ as $$\psi^*R=R_g = V +b \partial_r +c \partial_{\theta},\quad\mbox{ for some }V\in \Gamma(TB)\mbox{ and }b,c \in \R.$$
Condition (\ref{eq:goodR}) translates into $c>0$. The symplectic form is written as
$$ d\alpha_g= dg \wedge (\alpha_B+r^2 d\theta) + g(d\alpha_{B} +2rdr\wedge d\theta).$$
and from the defining equations of the Reeb vector field we obtain
$$0=d\alpha_g(R_g, \partial_r)= - \partial_rg \cdot \frac{1}{g}-crg.$$
Consequently, the condition on $g$ is verified as
$$c = -\partial_rg\cdot \frac{1}{g^2r} > 0\Longleftrightarrow \partial_rg<0.$$
\end{proof}
\noindent The third assertion regarding the convexity of the boundary can be deduced as follows:
\begin{corollary}
Let $(B,\pi)$ be an open book decomposition supported by $(M,\alpha)$. The binding $B$ is a convex boundary of any page $P_t=\overline{\pi^{-1}(t)}$ with respect to the Liouville vector field associated to the Liouville form $\alpha|_{P_t}$.
\end{corollary}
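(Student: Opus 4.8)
The plan is to localise near the binding and read off convexity from the normal form of Lemma~\ref{lem:special}. Convexity of a boundary is a condition local to that boundary, and $P_t$ is a compact manifold with $\partial P_t=B$, so it suffices to analyse $\alpha|_{P_t}$ on a collar of $B$ inside $P_t$. By Lemma~\ref{lem:special} and its proof there are a neighbourhood $U$ of $B$ and a trivialisation $\psi\colon B\times B^2(\delta)\to U$ with
\[
\psi^*\alpha=g\cdot(\alpha_{|B}+r^2\,d\theta),\qquad \partial_r g<0\ \text{for}\ r>0,\qquad (\pi\circ\psi)(p,r,\theta)=\theta .
\]
The last identity shows that $P_t\cap U=\psi\bigl(B\times\{(r,\theta):\theta=t\}\bigr)$ is precisely a collar of $B=\{r=0\}$ inside $P_t$. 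On it $d\theta$ pulls back to $0$, so the Liouville form $\lambda:=\alpha|_{P_t}$ pulls back to $h\cdot\alpha_{|B}$ with $h(p,r):=g(p,r,t)$, where now $\alpha_{|B}$ denotes the pull-back from $B$; note $\partial_r h<0$ for $r>0$.

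Next I would compute the Liouville vector field. From $d(h\,\alpha_{|B})=dh\wedge\alpha_{|B}+h\,d\alpha_{|B}$, and since $\alpha_{|B}$ and $d\alpha_{|B}$ are pulled back from $B$ and hence annihilate $\partial_r$, one gets $\iota_{\partial_r}d(h\,\alpha_{|B})=\partial_r h\cdot\alpha_{|B}$. Expanding $\bigl(d(h\,\alpha_{|B})\bigr)^{n}$, the only surviving top--degree term is $n\,\partial_r h\,h^{\,n-1}\,dr\wedge\alpha_{|B}\wedge(d\alpha_{|B})^{n-1}$, which is a nonvanishing volume form for $r>0$ because $B$ is a contact submanifold; thus $d(\alpha|_{P_t})$ is symplectic on the interior of the page, recovering its inherited exact symplectic structure. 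The Liouville vector field $X$, determined by $\iota_X d(h\,\alpha_{|B})=h\,\alpha_{|B}$, is therefore unique, and the formula for $\iota_{\partial_r}d(h\,\alpha_{|B})$ gives at once
\[
X=\frac{h}{\partial_r h}\,\partial_r .
\]

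It remains to read off the signs: since $h>0$ and $\partial_r h<0$ for $r>0$, the factor $h/\partial_r h$ is strictly negative, so on the punctured collar $X$ is a negative multiple of $\partial_r$. Hence $X$ is transverse to every slice $\{r=\mathrm{const}\}$ and points in the direction of decreasing $r$, i.e.\ outward from the page $P_t$ toward its boundary $B$; this is precisely the statement that $\partial P_t=B$ is a convex boundary for $\alpha|_{P_t}$. The one delicate point — and, I expect, the only real obstacle — is the behaviour exactly at $r=0$: there $\partial_r h$ vanishes (along $\{r=0\}$ one has $\partial_r g=-c\,r\,g^2$ with $c=d\theta(\psi^*R_\alpha)$ bounded, since $R_\alpha$ is tangent to $B$), so $d(\alpha|_{P_t})$ degenerates along $B$ and the formula for $X$ blows up. This is the familiar phenomenon that the naive page $\overline{\pi^{-1}(t)}$ of an open book carries a genuine Liouville domain structure only after the standard normalisation near the binding — equivalently, $B$ occurs as the ideal convex boundary of a symplectisation end — and the inequality $\partial_r g<0$ supplied by Lemma~\ref{lem:special} is exactly what makes this end convex rather than concave. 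Away from $\{r=0\}$ the argument above is literal.
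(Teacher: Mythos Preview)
Your argument is correct and is essentially the paper's own proof: both localise via Lemma~\ref{lem:special}, restrict to the slice $\theta=t$, and solve $i_X d(\alpha_g)_{|P_t}=(\alpha_g)_{|P_t}$ to obtain $X=(\partial_r g)^{-1}g\,\partial_r$, whose sign shows outward transversality. Your additional remarks on the degeneracy of $d(\alpha|_{P_t})$ at $r=0$ and the ideal--boundary interpretation go beyond what the paper spells out, but the core computation is identical.
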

\begin{proof}
This is a computation close to the boundary, as such we may use the trivialization model provided in Lemma \ref{lem:special}. In this chart a page is defined as the set
$$ \widetilde{P}_t= \psi^{-1}(P_t)= \{(p,r, \theta)\in B \times B^2(\delta):\quad r>0,\quad \theta=t \}.$$
In these coordinates the Liouville vector field $X$ is given by the equation
$$ (\alpha_g)_{|P_t} = i_X (d \alpha_g)_{|P_t},$$
and the solution can be explicitly written as
$$ X= \left( \partial_rg\right)^{-1}g\cdot\partial_r,$$
which is certainly outwards--transverse to the boundary. Thus, the boundary is convex with respect to the stated Liouville structure.
\end{proof}
\noindent Given a contact structure and a choice of contact form, we have described the geometric properties of an open book decomposition supported by them. An open book decomposition supported by a contact structure will be shown to exist at the end of this Section. Part of the relevance of the open book decompositions in contact geometry also resides on the converse construction: we will able to obtain contact structures from the symplectic data associated to an open book allegedly supported by a contact form. To be precise, an open book decomposition $(P,\Psi)$ is said to be symplectic if $(P, d\beta)$ is an exact symplectic manifold with convex boundary and $\Psi\in\mbox{Symp}(P,\partial P;d\beta)$ is a symplectomorphism supported away from the boundary. Then we obtain the following
\begin{proposition}
Let $M=(P,\Psi)$ be a symplectic open book decomposition. Then, there exists a contact structure with contact forms supporting the open book decomposition. Further, any two such adapted contact forms that induce symplectomorphic (relative to the boundary) pages are isotopic through contact structures.
\end{proposition}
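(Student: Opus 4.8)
The proof follows the Thurston--Winkelnkemper recipe, in the form given by Giroux, together with a Moser-type uniqueness argument. Write $\dim P = 2n$. Since $\partial P = B$ is of convex type for $d\beta$ and $\Psi$ is supported away from the boundary, the plan is to first fix a collar $(1-\e,1]\times B$ of $B$ in $P$ on which $\beta = \rho\,\lambda_B$, where $\rho$ is the collar coordinate with $\{\rho=1\}=\partial P$, $\lambda_B := \beta|_B$ is a contact form on $B$, and $\Psi = \mathrm{id}$.

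For existence, I would build a contact form first on $M\setminus B$, which is the mapping torus $P_\Psi$, and then extend it across a binding chart $B\times B^2(\delta)$. On $P_\Psi$: because $\Psi$ is a symplectomorphism of $(P,d\beta)$, both $\beta$ and $\Psi^*\beta$ are primitives of $d\beta$, hence lie in the affine space of primitives, so I take the segment $\mu_t := (1-t)\beta + t\,\Psi^*\beta$ for $t\in[0,1]$ --- which satisfies $d\mu_t = d\beta$ for all $t$ and equals $\rho\lambda_B$ on the collar --- and propagate it to a family $\{\mu_t\}_{t\in\R}$ by $\mu_{t+1}=\Psi^*\mu_t$. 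This family is invariant under the deck transformation $\Phi(x,t)=(\Psi(x),t-1)$, so $\eta_K := \mu_t + K\,dt$ descends to $P_\Psi$ for every constant $K$, with $t$ becoming the $S^1$-coordinate $\pi$. Expanding and using $\dim P = 2n$ and $d\mu_t = d\beta$ gives
\[
\eta_K\wedge(d\eta_K)^n = \bigl(K(d\beta)^n - n\,\mu_t\wedge(d\beta)^{n-1}\wedge\dot\mu_t\bigr)\wedge dt ,
\]
and since $(d\beta)^n$ is a volume form on the compact $P$ while the correction term is bounded on $P\times[0,1]$, hence everywhere by $\Phi$-invariance, $\eta_K$ is a contact form once $K$ is large. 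On the collar it reads $\eta_K = \rho\lambda_B + K\,dt$, whose Reeb field is $K^{-1}\partial_t$.

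To fill the binding chart, with polar coordinates $(r,\theta)$ on $B^2(\delta)$, I would look for a contact form $\alpha = h_1(r)\,\lambda_B + h_2(r)\,d\theta$. A computation as in Lemma~\ref{lem:special} shows this is contact at $r>0$ exactly when $h_1^{\,n-1}(h_1h_2'-h_1'h_2)>0$, and (passing to Cartesian coordinates) at $r=0$ when $h_1(0)>0$ and $h_2(r)=r^2+O(r^4)$. I would choose $h_1=h_1(r^2)>0$ decreasing in $r$, with $h_2=r^2$ near $r=0$, $h_2\equiv K$ near $r=\delta$ and $h_2'\ge0$, and $h_1$ prescribed near $r=\delta$ so as to match the jet of the collar coordinate; all of these are finitely many jet conditions plus convex sign conditions, and $K$ is free to be taken large, so such $h_1,h_2$ exist and the resulting $\alpha$ glues to $\eta_K$ after a conformal rescaling of $\eta_K$ near $\partial P_\Psi$ and a reparametrization of the collar. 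The global contact form so obtained supports $(B,\pi)$: indeed $\alpha|_B=\lambda_B$ is contact; its Reeb field is $K^{-1}\partial_t$ on the collar and, in the binding model, is tangent to $\{r=0\}=B$, where it restricts to the Reeb field of $\lambda_B$; and $d\pi(R_\alpha)=-h_1'/(h_1h_2'-h_1'h_2)>0$ for $r>0$ by the choice of $h_1,h_2$.

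For the uniqueness statement, given adapted contact forms $\alpha_0,\alpha_1$ whose pages are symplectomorphic relative to $B$, I would first apply a parametric form of Lemma~\ref{lem:special} to isotope $\alpha_0$ and $\alpha_1$, through adapted contact forms, until they agree on a neighbourhood of $B$: the auxiliary choices entering that normal form (the contact connection, the positive function $g$ with $\partial_r g<0$, the contact form induced on $B$) range over contractible or convex spaces, so the required isotopy exists. On $M\setminus B$ each $\ker\alpha_i$ is a contact structure on the mapping torus of its monodromy $\Psi_i\in\operatorname{Symp}(P,\partial P;d\beta)$; the hypothesis provides a symplectic isotopy identifying the two page-and-monodromy pairs, hence a path $\Psi_s$ from $\Psi_0$ to a conjugate of $\Psi_1$, and running the mapping-torus construction above along this path --- with $K$ uniformly large over the compact parameter --- yields a path $\alpha_s$ of contact forms from $\alpha_0$ to $\alpha_1$, all adapted to $(B,\pi)$ and agreeing near $B$. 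Then $\{\ker\alpha_s\}$ is the desired path of contact structures, and Gray stability upgrades it to an ambient isotopy. The step I expect to be the main obstacle is precisely this last one: converting ``the pages are symplectomorphic relative to the boundary'' into an honest path of monodromies in $\operatorname{Symp}(P,\partial P;d\beta)$, which forces one to use the hypothesis in full and to be careful with the possibly disconnected symplectomorphism group --- in effect the hypothesis must be read as supplying exactly this symplectic isotopy. A secondary technical point is the gluing near the binding, namely reconciling the collar form $\rho\lambda_B+K\,dt$ with the radial model $h_1\lambda_B+h_2\,d\theta$ while preserving the Wronskian-type positivity $h_1h_2'-h_1'h_2>0$ throughout.
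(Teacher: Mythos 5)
Your existence argument is the same Thurston--Winkelnkemper/Giroux recipe the paper uses, and it is essentially right, but there is one concrete flaw as written: the family $\mu_t=(1-t)\beta+t\,\Psi^*\beta$ extended by $\mu_{t+1}=\Psi^*\mu_t$ is only continuous, not smooth, at integer $t$ (its $t$-derivative jumps from $\Psi^*\beta-\beta$ to $\Psi^*(\Psi^*\beta-\beta)$ wherever $\Psi\neq\mathrm{id}$), so $\mu_t+K\,dt$ does not descend to a smooth form on the mapping torus. The paper avoids this by interpolating with a cut-off $c(t)$ that is locally constant near $t=0$ and $t=1$, i.e. $\beta_t=c(t)\Psi^*\beta+(1-c(t))\beta$; the fix costs nothing since the contact estimate only uses that $\dot\beta_t$ is bounded. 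Your binding extension ($h_1\lambda_B+h_2\,d\theta$ with the Wronskian condition) is the same computation the paper does in its flipped collar coordinates with the pair $(g,H)$, and your verification that the result supports the open book is fine.

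For uniqueness you take a genuinely different, and harder, route than the paper. You propose to normalize both forms near $B$, extract from the hypothesis a path of monodromies in $\operatorname{Symp}(P,\partial P;d\beta)$, and re-run the mapping-torus construction along that path --- and you correctly identify the production of that path as the main obstacle. The paper sidesteps this entirely: it adds to each adapted form $\alpha_i$ a large multiple of $\nu=\widetilde{d}\cdot\pi^*(d\theta)$ (with $\widetilde{d}$ a smoothed squared distance to the binding, so $\nu$ extends by zero across $B$), observes that $\alpha_i+t\nu$ stays adapted and contact for all $t\in[0,K]$, and then interpolates \emph{linearly} between $\alpha_0^K$ and $\alpha_1^K$; for $K$ large the term $K\,\pi^*(d\theta)\wedge(\text{convex combination of }d\alpha_0,d\alpha_1)^n$ dominates, and positivity of the convex combination on the pages is exactly what the hypothesis on the pages supplies. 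Gray stability then finishes. The trade-off: the paper's argument needs no statement about $\pi_0$ or paths in the symplectomorphism group and no parametric normal form near $B$, at the price of being a slightly delicate domination estimate; your route, if one could actually produce the isotopy of monodromies, would give more (an explicit deformation through adapted open books), but as stated it rests on an unproved and genuinely nontrivial step. I would regard that step as a gap rather than a detail.
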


\begin{proof}
Let us first show existence. The contact structure will be constructed from a deformation of the constant distribution $\ker(\beta)$. Note that the case in which $\Psi$ is an exact symplectomorphism is particularly simple. Consider a smooth increasing cut--off function $c: [0,1] \longrightarrow [0,1]$ such that $c(t)|_{[0,0.1]}=1$ and $c(t)|_{[0.9,1]}=0$. Define on $P \times [0,1]$ the interpolating $1$--form
$$\beta_t= c(t)\Psi^*(\beta) +(1-c(t))\beta.$$
Then the form
\begin{equation}
\alpha_m= \beta_t +mdt \label{eq:mcont}
\end{equation}
is a contact form for $m$ large enough. Indeed, since
$$d\alpha_m=dt\wedge(\dot{c}(t)\Psi^*\beta+(1-\dot{c}(t))\beta)+d\beta$$
the contact condition reads
$$\alpha_m\wedge(d\alpha_m)^n = mdt\wedge (d\beta)^n+\eta$$
where $\eta$ is a $(2n+1)$--form independent of $m$. It remains to extend the form $\alpha_m$ to the relative suspension, that is to say, to fill the mapping torus $P_\Psi$. This will be done explicitly.\\

\noindent We use the characterization of the convex boundary of a symplectic manifold in terms of the symplectization, cf. \cite{Ge08}. Let $(M, d\alpha)$ be an exact symplectic manifold with convex boundary $B=\partial M$, then there exists a neighborhood $U$ of the boundary symplectomorphic to
$$(B \times (-\varepsilon, 0], d(e^s\cdot\alpha_{|B}),\quad s\in(-\epsilon,0].$$
In other words, a neighborhood is symplectomorphic to the symplectization of the contact manifold $(B, \alpha_{|B})$. In particular, the Liouville vector field reads $X= \partial_s$ in these coordinates.\\

\noindent Let us fill the mapping torus $P_\Psi$. A neighborhood $V$ of its boundary is of the form $V \stackrel{\varphi}{\cong} B \times (-\varepsilon, 0]\times S^1$. Fix coordinates $(p,s,t) \in B \times (-\varepsilon, 0]\times S^1$, then the contact form $\alpha_m$ in (\ref{eq:mcont}) is written as
$$
\varphi^* \alpha_m = e^s(\alpha_{|B})+ mdt.
$$
Defining the form in the filling is tantamount to an extension in a neighborhood of the boundary. Geometrically, we invert the model away from the section $B\times\{0\}\times S^1$ and glue it from the other side. In explicit terms, we consider the change of coordinates
\begin{eqnarray*}
\rho: B \times (-\varepsilon, 0)\times S^1 & \longrightarrow & B \times (0, \varepsilon)\times S^1\\
(p,s,t) & \longmapsto & (p,-s,t).
\end{eqnarray*}
In these coordinates our aim is to extend the form
$$\eta = e^{-s}\alpha_{|B}+ mdt= e^{-s}\cdot(\alpha_{|B}+ me^sdt)$$
to the gluing area $s=0$ preserving the contact condition. The contact structure will be defined on the whole open book since we may understand the $(s,t)$--coordinates as polar coordinates in the disk $B^2(\varepsilon)$. In the spirit of the proof of Lemma \ref{lem:special}, we define two smooth functions
$$H:[0,\varepsilon) \longrightarrow [0,1],\quad g:[0, \varepsilon)\longrightarrow [0,me^{\varepsilon}],$$
that contact interpolate between $\eta$ and the contact form in the boundary. Being precise, the functions must satisfy the following conditions:
\begin{itemize}
\item[-] $H|_{[\varepsilon/2,\varepsilon)}= me^s$ and $H|_{[0,\tau]}=s^2$ for an arbitrarily small $\tau<\epsilon$.\\For the contact condition, we require $\partial_s H>0$, for $s>0$.\\
\item[-] $g|_{[\varepsilon/2, \varepsilon)}=e^{-s}$ and $g|_{[0,\tau]}=1-s^2$ for an arbitrarily small $\tau < \epsilon$. After the lemma, since $s$ is the radial coordinate, $g$ should also satisfy $\partial_s g<0$, for $s>0$.
\end{itemize}
Finally we may construct the form
$$\widetilde{\eta}= g(s)(\alpha_{|B}+ H(s)dt),$$
coinciding with $\eta$ on the domain $B \times [\varepsilon/2, \varepsilon]\times S^1$ and extending the contact form $\alpha_m$ to a neighborhood of the boundary. The contact structure is adapted to the open book by construction.\\

Let us focus on the uniqueness statement. Consider two contact forms $\alpha_0$ and $\alpha_1$ adapted to the same open book $(B,\pi)$ and inducing the same symplectic structure on the leaves. Endow the manifold with a Riemmanian metric and define the function $d:M_B \to \R^+$ measuring the square of the distance from a point to the binding. This function is smooth close to the binding; smoothly deform $d$ so it becomes constant away from a small neighborhood of $B$, denote this deformation by $\widetilde{d}$. Consider the $1$--form $\nu= \widetilde{d} \cdot \pi^* (d\theta)$ defined over $M \setminus B$.  We construct the following deformation of any compatible contact form $\alpha$:
$$\alpha^t = \alpha + t \nu,$$
for $t\in[0,K]$ where the constant $K>0$ is arbitrarily large. Observe that the family $\alpha^t$ is a family of compatible contact forms. In order to connect the forms $\alpha_0$ and $\alpha_1$ we use the following linear family of compatible contact structures:
$\widetilde{\alpha}_t =(1-t)\alpha^K_0 +t \alpha^K_1.$
By Gray's stability we conclude the uniqueness of the contact structure.
\end{proof}

As previously mentioned, we will explain a converse of this result. There are two different cases depending on the dimension of the manifold being $3$ or higher. In dimension $3$ there is a strong statement that ensures a complete equivalence:
\begin{theorem} $($Giroux$)$
Let $M$ be a smooth manifold. There exists a one--to--one correspondence between contact structures over $M$ up to isotopy and symplectic open book pairs $(P, \phi)$ associated to $M$ up to positive stabilization.
\end{theorem}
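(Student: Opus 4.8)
The statement asserts an equivalence of two sets: contact structures on $M$ up to isotopy, and symplectic open book pairs $(P,\phi)$ supported by $M$ up to positive stabilization. The forward map---from contact structures to open books---is the hard existence result (due to Giroux--Mohsen, cf. \cite{Gi02,GM12}), and I would construct it via the approximately holomorphic machinery developed in Section \ref{section:asymp}: on the contactization one produces a pair of asymptotically holomorphic sections $(s_{k,0},s_{k,1})$ of $L^{\otimes k}$ which are jointly $\varepsilon$-transverse (the $0$- and $1$-dimensional linear systems of Theorems \ref{thm:Ber} and \ref{thm:Ber2}), so that the common zero locus $B=Z(s_{k,0})\cap Z(s_{k,1})$ is a codimension-$4$ contact submanifold and the map $\pi=[s_{k,0}:s_{k,1}]:M\setminus B'\to S^1$ (taking arguments, with $B'=Z(s_{k,0})$ the codimension-$2$ binding) is, after a $C^1$-small perturbation coming from the estimated transversality, a submersion whose fibres are exact symplectic and whose Reeb flow is positively transverse; this uses Lemma \ref{lem:subquasi} and Lemma \ref{lem:antilin} fibrewise. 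The backward map---from a symplectic open book pair to a contact structure---is exactly the content of the Proposition just proved: $(P,\phi)\mapsto\alpha_m$ as in \eqref{eq:mcont}, extended across the binding by Lemma \ref{lem:special}.

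\medskip

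The core of the theorem is then to show these two assignments are mutually inverse \emph{after passing to the respective equivalence relations}. Starting from $(P,\phi)$, building $\alpha_m$, and re-running the approximately holomorphic construction must return a pair positively-stably-equivalent to $(P,\phi)$: this is where positive stabilization is forced into the statement, since the AH construction depends on the choice of $k$ and of the sections, and changing $k\mapsto k+1$ tensors by one more copy of $L$, which on the open book side is precisely a positive (Lutz--Hopf) plumbing---one attaches a Weinstein handle / Hopf band to the page and composes $\phi$ with the corresponding positive Dehn twist. I would first establish that the open books associated to $s_k$ and $s_{k'}$ for $k'>k$ are related by $(k'-k)$ positive stabilizations; the mechanism is that $s_{k+1}$ can be taken to be (a transverse perturbation of) $s_k\otimes s_1^{\mathrm{ref}}$ for a reference section $s_1^{\mathrm{ref}}$ of $L$, and the extra factor localizes near the new binding component as a standard Hopf model. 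Conversely, starting from a contact structure $\xi$, the Proposition shows the open book one extracts supports a contact form inducing $\xi$, and the uniqueness clause of the Proposition (via Gray stability) shows the resulting contact structure is independent, up to isotopy, of which adapted contact form one picked.

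\medskip

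Finally I would invoke the genuinely $3$-dimensional inputs that pin the correspondence down to a \emph{bijection} rather than merely a surjection. In dimension three a page is a surface with boundary, and two key facts are special to this case: (i) any two open books supporting the same contact structure become isomorphic after finitely many common positive stabilizations---this is Giroux's stabilization theorem, proved by a contact-geometric analysis of the pages using convex surface theory and the classification of tight contact structures on handlebodies, and it is precisely what is unavailable verbatim in higher dimensions---and (ii) the AH construction in real dimension three produces an \emph{honest} Lefschetz fibration over the disk whose boundary open book is well-defined up to stabilization, so the ambiguity in the forward map is exactly a stabilization ambiguity. Combining (i) with the results above gives that the forward and backward maps descend to mutually inverse bijections on the quotients, which is the assertion.

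\medskip

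\textbf{Main obstacle.} The serious difficulty is step (i): controlling \emph{all} open books supporting a fixed $\xi$ up to common positive stabilization. Giroux's original argument routes through the one-to-one correspondence with contact cell decompositions and a careful uniqueness analysis of the convex-surface data on the pages; reproducing it rigorously is far more involved than the existence half, and it is the only place where the hypothesis $\dim M=3$ is genuinely essential. Everything else---the AH existence machinery, the filling across the binding, the Gray-stability uniqueness---is either already in hand from Section \ref{section:asymp} or is the Proposition above, and the stabilization-shift $k\mapsto k+1$ is a local Hopf-band computation.
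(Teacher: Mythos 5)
The paper does not actually prove this statement; it only records it and refers to \cite{Co08} for an account, so there is no internal proof to compare yours against. Judged on its own terms, your outline has the right architecture (existence of a supporting open book, the Thurston--Winkelnkemper-type inverse provided by the Proposition, and a stabilization analysis), but the description of the forward map is off: a pair $(s_{k,0},s_{k,1})$ with codimension-$4$ common zero locus is the Lefschetz \emph{pencil} construction, not the open book one. In dimension $3$ that common zero locus is empty, so no binding appears, and $\arg(s_{k,1}/s_{k,0})$ does not define a circle-valued fibration on the complement of $Z(s_{k,0})$ alone. What you want is a \emph{single} $\varepsilon$-transverse asymptotically holomorphic section $s_k$ of $L^{\otimes k}$, with binding $B_k=Z(s_k)$ and $\pi_k=s_k/|s_k|$, exactly as in Steps 1--2 of the proof of Theorem \ref{thm:exist_open}.

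More seriously, the heart of the theorem --- that \emph{any} two open books supporting isotopic contact structures become isomorphic after common positive stabilizations --- is named in your final paragraph but not proved, and the mechanism you propose cannot deliver it. Even granting that the open books attached to $s_k$ and $s_{k+1}$ differ by positive stabilizations (itself nontrivial: $s_k\otimes s_1^{\mathrm{ref}}$ fails transversality near $Z(s_1^{\mathrm{ref}})$, and the local Hopf-band analysis there must be carried out), this only controls the ambiguity \emph{internal} to the approximately holomorphic construction; it says nothing about a supporting open book that does not arise as some $(B_k,\pi_k)$. Giroux's injectivity argument runs through contact cell decompositions and convex surface theory and is logically independent of the existence machinery. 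As written, your argument establishes at best surjectivity of the map from stabilization classes of open books to isotopy classes of contact structures (and even for well-definedness you should verify explicitly that a positive stabilization preserves the supported contact structure up to isotopy). Since the statement is precisely the bijectivity, the gap is essential rather than technical.
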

\noindent For the notion of stabilization and an account of the proof of this result, see \cite{Co08}.
The higher--dimensional analogue is weaker. The statement is:
\begin{theorem} $($Giroux$)$ \label{thm:exist_open}
Let $(M, \xi)$ be a contact manifold. There exists a contact form $\alpha$ for the contact structure $\xi$ supporting an open book.
\end{theorem}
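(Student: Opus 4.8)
The plan is to construct the supporting contact form by the approximately holomorphic techniques developed in Section \ref{section:asymp}, following the strategy of Giroux and Mohsen. Begin by fixing a contact form $\alpha_0$ for $\xi$; by Lemma \ref{lem:existe_quasi} we already have the quasi--contact apparatus $(\xi,\beta,g)$, but here in fact $\alpha_0 = \beta$, so all the transversality statements of the previous section hold verbatim in the contact setting. The idea is to produce a pair of asymptotically holomorphic sections and use their ratio to define the open book map $\pi$, with binding the common zero locus; the contact form supporting it will be a controlled deformation of $\alpha_0$ dictated by the local models of Lemma \ref{lem:special}.

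Concretely, I would first apply Theorem \ref{thm:Ber2} (with $V$ the trivial line bundle $\C$ and $D=\xi$, or an appropriate subdistribution) to obtain, for $k$ large, a sequence of sections $\sigma_k=(\sigma_{k,0},\sigma_{k,1}): M\to L^{\otimes k}\otimes\C^2$ such that $\partial_D\sigma_{k,0}\otimes\sigma_{k,1}-\sigma_{k,0}\otimes\partial_D\sigma_{k,1}$ is $\varepsilon$--transverse to zero along $\xi$. Setting $s_k=(\sigma_{k,0},\sigma_{k,1})$ one also arranges, by a preliminary application of Theorem \ref{thm:Ber}, that the pair is transverse to zero so that the common zero set $B_k=Z(\sigma_{k,0})\cap Z(\sigma_{k,1})$ is a smooth codimension--$4$ contact submanifold and, by Lemma \ref{lem:subquasi} applied to each factor, that $Z(\sigma_{k,0})$ is a smooth contact submanifold containing $B_k$. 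The projectivized map
$$
\pi_k = \frac{\sigma_{k,1}}{\sigma_{k,0}} : M\setminus B_k \longrightarrow \CP^1
$$
is asymptotically holomorphic; the transversality of the ``Jacobian'' section above is exactly the condition that guarantees, for $k$ large, that near the binding $\pi_k$ has no critical points and that $B_k$ has the required trivial normal bundle with the standard local product structure. Restricting to the argument, i.e. composing with the radial projection $\CP^1\setminus\{0,\infty\}\to S^1$ after a further generic perturbation removing the fiber over one point, yields the open book data $(B,\pi)$ in the topological sense of the first definition of Section \ref{sect:ob}.

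It remains to produce the contact form. Here one runs the deformation argument: starting from $\alpha_0$, one modifies it in a neighborhood of $B_k$ using the normal form of Lemma \ref{lem:special}, where the local model forces $\psi^*\alpha = g(\alpha_{|B}+r^2\,d\theta)$ with $\partial_r g<0$, and away from $B_k$ one adds a large multiple $m\,\pi_k^*(d\theta)$ of the pulled--back angular form, exactly as in equation (\ref{eq:mcont}); for $m$ and $k$ large the estimates $|\bar\partial\sigma_k|=O(k^{-1/2})$ together with the uniform $\varepsilon$--transversality make the contact condition $\alpha\wedge(d\alpha)^n>0$ hold, since the dominant term is $m\,\pi_k^*(d\theta)\wedge(d\alpha_0)^n>0$ on the complement of a binding neighborhood and the explicit local model is contact near $B_k$. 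One then checks the supporting conditions: $(B_k,\alpha_{|B_k})$ is contact by construction, and the Reeb field $R$ satisfies $d\pi_k(R)>0$ away from $B_k$ and is tangent to $B_k$ because the $m\,d\theta$ term dominates the Reeb equation for large $m$, forcing the $\partial_\theta$--component of $R$ to be positive, precisely the computation already carried out in the proof of Lemma \ref{lem:special} (condition (\ref{eq:goodR})).

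The main obstacle is controlling the interaction between the two regimes --- the approximately holomorphic estimates in the bulk and the rigid local model near the binding --- uniformly as $k\to\infty$. In particular one must verify that the perturbation of $\alpha_0$ needed to interpolate between the two descriptions stays $C^1$--small enough not to destroy the $\varepsilon$--transversality along $\xi$ (using the $C^1$--stability remarked after Lemma \ref{lem:subquasi}), while still being large enough (the parameter $m$) to guarantee the Reeb transversality to $\pi_k$. A secondary technical point is the perturbation needed to pass from a map to $\CP^1$ to a genuine fibration over $S^1$: one must remove the ``poles'' cleanly, i.e. ensure the preimage of a point is avoided or handled, and verify that the pages $\overline{\pi_k^{-1}(t)}$ carry the asserted convex exact symplectic structure --- but this last part is exactly the content of the corollaries following Lemma \ref{lem:special}, so once the form is in hand it is automatic.
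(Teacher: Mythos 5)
There is a genuine gap: you have essentially reproduced the Lefschetz pencil construction of Section \ref{sect:pencil} and then tried to force it into an open book, and both steps of that conversion fail. First, the codimension count is wrong. An open book needs a codimension--$2$ binding around which the circle--valued map is the angular coordinate of a trivialized normal disk bundle; your base locus $B_k=Z(\sigma_{k,0})\cap Z(\sigma_{k,1})$ has codimension $4$, and composing $\sigma_{k,1}/\sigma_{k,0}$ with the argument map $\CP^1\setminus\{0,\infty\}\to S^1$ produces a map defined only away from $Z(\sigma_{k,0})\cup Z(\sigma_{k,1})$, a set which is not a smooth submanifold (the two divisors meet along $B_k$) and around whose two components the angular map winds with \emph{opposite} signs, so the required local model $(\pi\circ\phi)(p,r,\theta)=\theta$ cannot hold on both. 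Second, the transversality you arrange via Theorem \ref{thm:Ber2} guarantees that $\partial(\sigma_{k,1}/\sigma_{k,0})$ is transverse to zero, which means the critical locus is a nonempty one--dimensional set of curves; an open book map must be a submersion on all of $M\setminus B$, so these critical points are fatal rather than a feature to be kept.

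The intended argument uses a \emph{single} asymptotically holomorphic section $s_k$ of $L^{\otimes k}$ (Theorem \ref{thm:Ber} only), takes the codimension--$2$ contact divisor $B_k=Z(s_k)$ as binding, and defines $\pi_k=s_k/|s_k|:M\setminus B_k\to S^1$ directly. The Reeb transversality then comes for free from the curvature of the prequantum connection: $\nabla_R s_k=d_Rs_k-iks_k$ together with $|\nabla s_k|=O(1)$ in the $g_k$--metric forces $d_Rs_k\approx -iks_k$ wherever $|s_k|$ is bounded below, i.e.\ the section rotates rapidly under the Reeb flow, while near $B_k$ the $\varepsilon$--transversality along $\xi$ supplies the submersion property. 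This also removes the need for your term $m\,\pi_k^*(d\theta)$, which is in any case inadmissible here: adding a large angular form changes the kernel of the $1$--form, whereas the theorem demands a contact form for the \emph{given} $\xi$, so the only modification permitted is the conformal adjustment of $\alpha$ near the binding (the function $g$ with $\partial_rg<0$ as in Lemma \ref{lem:special}). The $m\,dt$ trick belongs to the converse construction, building a contact structure from an abstract symplectic open book, not to this existence statement.
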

\begin{proof} The proof constructs the open book decomposition using the theory of asymptotically holomorphic sections. Let us divide the argument in $3$ parts: construction of the binding as a contact divisor, obtaining the topological fibration over the circle and description of the contact form following Lemma \ref{lem:special}.\\

\noindent {\it Step 1: The binding}\\
We first describe the input data require to use the methods of Section \ref{section:asymp}: we select a contact form $\alpha$ for the contact distribution $\xi$, fix a compatible almost complex structure $J$, construct the prequantizable bundle $L=M \times \C$ with associated connection $\nabla= d -i\alpha$ and the sequence of metrics $g_k$ that we will compute the norms with. Theorem \ref{thm:Ber} provides us with an asymptotically holomorphic sequence of sections $s_k: M \to L^{\otimes k}$ which are $\varepsilon$--transverse to zero along $\xi$. For $k$ large enough, the zero locus $B_k=Z(s_k)$ is a contact submanifold with trivial normal bundle. This is the contact divisor that will be used as binding.\\

\noindent {\it Step 2: The topological open book} \newline
Consider the following sequence of maps
\begin{eqnarray*}
\pi_k: M\setminus B_k & \longrightarrow & S^1, \\
p & \longmapsto & \frac{s_k(p)}{|s_k(p)|}.
\end{eqnarray*}
The section $s_k:M \to L^{\otimes k}= \C$ is being understood as a $\C$--valued smooth function since $L$ is topologically trivial. Consider the following sequence of open covers $M=U_k\cup V_k$, where
$$U_k= \left\{ p \in M: |s_k| < \frac{\varepsilon}{2}\right\}\quad\mbox{and}\quad V_k= \left\{ p \in M: |s_k| > \frac{\varepsilon}{4}\right\}.$$
The covariant derivative reads as
$$\nabla s_k(p) = ds_k(p) -ik\alpha s_k(p),$$
and thus deriving in the Reeb vector field direction we obtain
$$\nabla s_k(p) = d_R s_k(p) -iks_k (p).$$
Since the sections satisfy the asymptotically holomorphic bounds $|\nabla s_k|=O(1)$ and $|R|_k= k^{1/2}$, the $g_k$--norm of the derivative in the Reeb direction can be estimated as
$$| d_Rs_k(p) +iks_k(p) |=O(k^{1/2}).$$
Hence $d_R s_k(p) \equiv -iks_k(p)$ in the open set $V_k$. A brief computation shows that $\pi_k$ is a submersion in this situation and that the Reeb vector field $R$ is transverse to the fibers.\\

\noindent To conclude analogously for $U_k$ we use the directions in the distribution. For any vector $e_p \in \xi_p$ the covariant derivative reads $\nabla_{e_p} s_k= d_{e_p} s_k$, therefore the $\varepsilon$--transversality ensures that for any point $p$ in the region $U_k$, there are two vectors $u_k, v_k \in \xi_p$ such that the map $\nabla s_k: \xi_p \longrightarrow \C$ is surjective restricted to them. Consequently so is the map $d s_k: \xi_p \longrightarrow \C$ when restricted to them, consequently the map $\pi_k$ is a submersion on $U_k$. The implicit function theorem provides the topological local model for the function close to the binding.\\

\noindent {\it Step 3: Contact form close to the binding.} \newline
The contact form in a neighborhood over the binding will be constructed as a contact fibration over the normal disk. We should ensure that the fibers close to the binding are also contact submanifolds, i.e. the sets $B_k(t)= Z(s_k-t)$ for $t\in B^2(\varepsilon/2)\subset\C$ are contact submanifolds. It is important to notice that the sequence $s_k-t$ is no longer asymptotically holomorphic, since the sequence of sections $\sigma_k=t$ is not asymptotically holomorphic for the derivatives in the Reeb direction do not satisfy the asymptotically holomorphic bounds. Since $s_k-t$ are $\frac{\varepsilon}2$--transverse to zero along $\xi$, the sets $B_k(t)$ are at least smooth submanifolds. However, the antiholomorphic part is bounded as $|\bar{\partial} (s_k -t)| =O(k^{-1/2})$ and therefore the sets $B_k(t)$ are also contact submanifolds. In particular, the projection map
\begin{eqnarray*}
\Pi_k: V_k & \longrightarrow & D^2(\varepsilon) \\
p & \longmapsto & s_k(p)
\end{eqnarray*}
is a contact fibration. We now use the contact fibration methods from the proof of Lemma \ref{lem:special} to obtain a positive function $\delta: B\longrightarrow \R^+$ such that the domain $\widetilde{V}_k= \{ (p, v) \in B \times \R^2: |v| \leq \delta(p) \}$ admits a diffeomorphism
$$\phi_k: B_k \times D^2(\delta)\longrightarrow\widetilde{V}_k, $$
such that $\phi_k^*(\alpha)= \widetilde{g}\cdot(\alpha_B +r^2 d\theta)$, for a positive function $\widetilde{g}$. This diffeomorphism is also compatible with the circle projection, i.e. if
$$\pi_\theta: \widetilde{V}_k \setminus B \times \{ 0 \}\longrightarrow S^1$$
denotes the projection into the angular coordinates, then $\pi_\theta= \pi_k \circ \phi_k$. Note that there is no a priori guarantee that $\partial_r \widetilde{g}<0$ for $r>0$, which is the condition for the contact form to be adapted. However, we have already verified that $\alpha$ supports the open book in the neighborhood of the boundary $\phi_k^{-1}(U_k \cap \widetilde{V}_k)$, in particular $\partial_r \widetilde{g}<0$ in this region. Thus, it remains to find a function $g:\widetilde{V}_k\longrightarrow\R^+$ such that:
\begin{enumerate}
\item[-] $g$ extends $\widetilde{g}$, i.e. $g=\widetilde{g}$ over $\phi_k^{-1}(U_k \cap\widetilde{V}_k)$.
\item[-] It satisfies the contact condition $\partial_r g<0$ for $r>0$ and for small radii $0\leq r<r_0$ it coincides with the local model $g(p,r)=k-r^2$ for some large $k>0$.
\end{enumerate}
The function $g$ always exists. Finally, we define the contact form
$$ \widetilde{\alpha}= g\cdot(\alpha_B +r^2 d\theta),$$
extending $\phi_k^*(\alpha)$ beyond $\phi_k^{-1}(U_k \cap\widetilde{V}_k)$. This form can be extended through $\phi_k$ to an adapted global form on the open book decomposition $M=(B_k, \pi_k)$.
\end{proof}
\begin{remark}
We believe that is possible to construct the adapted contact form to be $C^0$--close to the arbitrary initial contact form. This would require the use of the asymptotically holomorphic theory as developed in \cite{IMP99}, since we need a better control for the derivatives in the Reeb direction to ensure the bound
$$| \nabla_R s_k(p)| =O(k^{s}),$$
where $0\leq s < 1/2$. This would imply that the Reeb vector field would be asymptotically tangent to the submanifold $B_k$ and therefore, for $k$ large enough, we could obtain $|\partial_r\widetilde{g}|<\gamma$, for $\gamma>0$ arbitrarily small.

\end{remark}
\noindent As mentioned in the introduction this existence result was essentially proved by E. Giroux almost 10 years ago. E. Giroux and J. P. Mohsen are writing a monography \cite{GM12} containing this result and a more complete dictionary relating open books and contact structures. We briefly cite some of the main results in the area:
\begin{itemize}
\item[-] Uniqueness up to stabilization of the open books constructed in the preceding Theorem \ref{thm:exist_open}.
\item[-] Equivalence between convex decompositions, as defined in \cite{Gi91}, and adapted open book decompositions.
\item[-] In the case of a Stein fillable contact manifold, the open book can always be understood as the boundary of a Lefschetz pencil over the disk. As a corollary, they obtain that a contact manifold is Stein fillable if and only if it admits an open book whose monodromy map is generated by positive Dehn--Seidel twists.
\item[-] Relations with the existence of contact structures in higher dimensions. In particular, existence of contact fibrations.
\end{itemize}
\section{Pencils in quasi--contact and contact geometry}\label{sect:pencil}

In this section we explain the second type of decomposition mentioned in Section \ref{sect:introduction}: the analogue of the Lefschetz pencil in a symplectic manifold. They were initially introduced in \cite{Pr02} for the contact case and in \cite{Ma09} for the quasi--contact one. The geometric construction still consists in projecting the manifold to reduce the dimension. In this case we will produce a projection onto $\CP^1$, thus the fibers become real codimension--$2$ submanifolds.\\

\subsection{Definitions.} Let $(M, \xi, d\beta)$ be a quasi--contact structure and fix a compatible almost--complex structure for the symplectic bundle $(\xi,d\beta)$. A chart $\phi:(U,p)  \longrightarrow V \subset (\C^n \times \R, 0)$ is said to be compatible with the quasi--contact structure at a point $p \in U \subset M$ if the push--forward at $p$ of $\xi_p$ is the hyperplane $\C^n \times \{ 0\}$ and $\phi_*d\beta_p$ is a positive $(1,1)$--form. The central notion in this section is the content of the following:
\begin{definition} \label{def:almost_pencil}
A quasi--contact pencil on a closed quasi--contact manifold $(M^{2n+1}, \xi,
d\beta)$ is a triple $(f,B,C)$ consisting of a codimension--$4$ quasi--contact
submanifold $B$, called the base locus, a finite set $C$ of smooth
transverse curves and a map $f:M\backslash B\longrightarrow \C\mathbb{P}^1$
conforming the following conditions:
\begin{itemize}
\item[(1)] The set $f(C)$ contains locally smooth curves with transverse
self--intersections and the map $f$ is a submersion on the complement of $C$.
\item[(2)] Each $p\in B$ has a compatible local coordinate map to $(\C^n \times
\R,0)$ under which $B$ is locally cut out by $\{z_1=z_2=0\}$ and $f$ corresponds
to the projectivization of the first two coordinates, i.e. locally $\displaystyle f(z_1, \ldots, z_n,
t)= \frac{z_2}{z_1}$. 
\item[(3)] At a critical point $p\in \gamma\subset M$ there exists a compatible
local coordinate chart $\phi_P$ such that $$(f\circ \phi_P^{-1})
(z_1,\ldots,z_n,s)=f(p)+z_1^2+\ldots+z_n^2+g(s)$$ where $g:(\R,0)\longrightarrow (\C,0)$ is
a submersion at the origin.
\item[(4)] The fibers $f^{-1}(P)$, for any $P\in \CP^1$, are quasi--contact
submanifolds at the regular points.
\end{itemize}
\end{definition}

\begin{figure}[ht]
\includegraphics[scale=0.55]{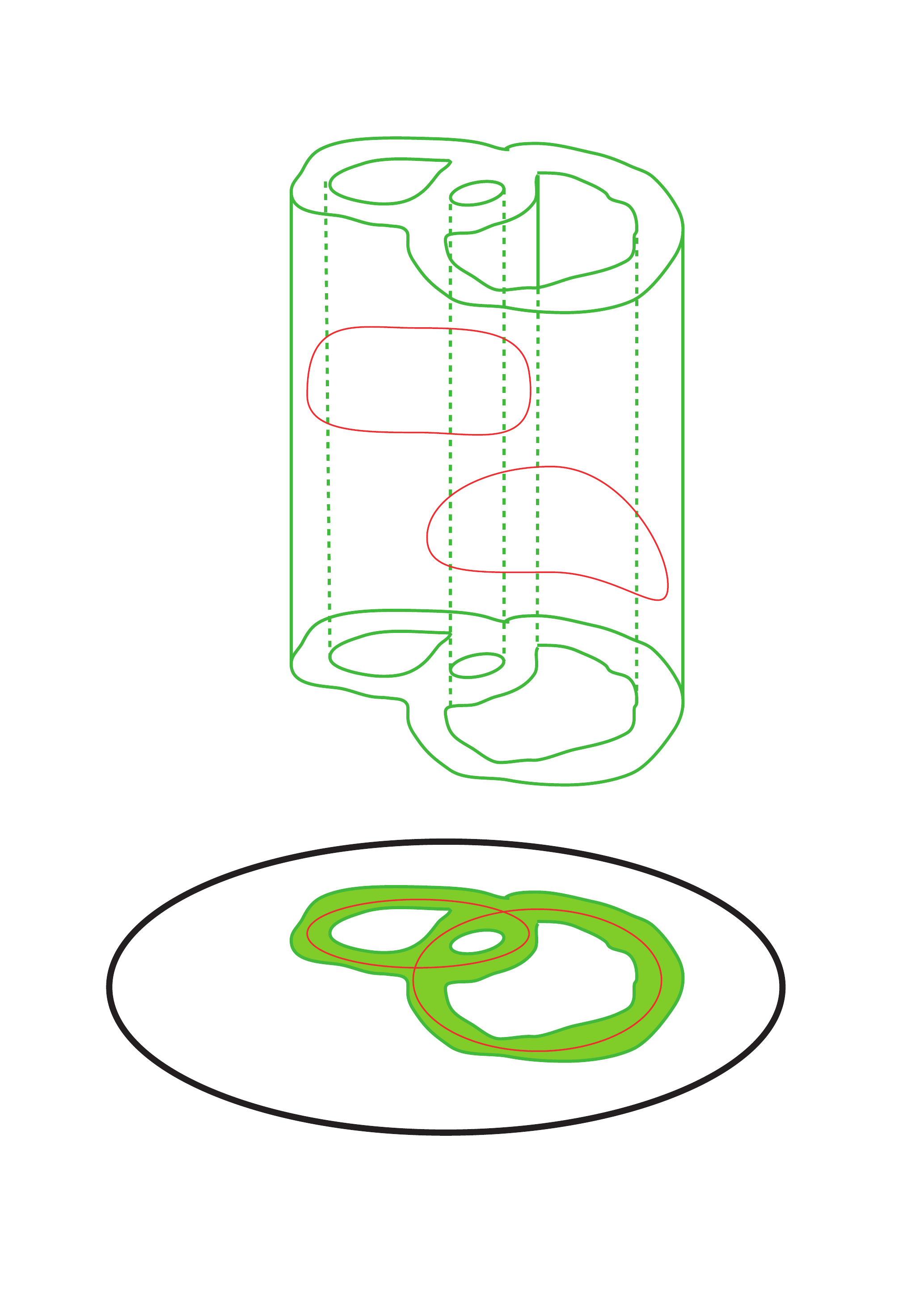}
\caption{Counter--image of a neighborhood of two curves of critical values.}
\end{figure}

\noindent These objects always exist on a quasi--contact manifold. Actually, it is even possible to partially prescribe the topology of the fibres:
\begin{theorem} \label{thm:exist_pencil}
Let $(M, \xi, d\beta)$ be a quasi--contact manifold. Given an integral class $a\in
H^2(M, \Z)$, there exists a quasi--contact pencil $(f, B, C)$ such that the
fibers are Poincar\' e dual to the class $a$.
\end{theorem}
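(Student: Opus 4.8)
The plan is to construct the pencil $(f,B,C)$ as the projectivization of a pair of asymptotically holomorphic sections of a suitable line bundle, following the Donaldson scheme adapted to the quasi--contact category. First I would fix a hermitian line bundle $V$ with $c_1(V)=a$ and, for $k$ large, work with the twisted bundles $V\otimes L^{\otimes k}\otimes\C^2$, where $L$ is the prequantizable bundle of $(\xi,\beta)$. Starting from the zero sequence, apply Theorem \ref{thm:Ber} to obtain an asymptotically holomorphic sequence $s_k=(s_{k,0},s_{k,1}):M\to V\otimes L^{\otimes k}\otimes\C^2$ that is $\varepsilon$--transverse to zero along $\xi$; its common zero locus $B=Z(s_k)$ is, by the Corollary following Theorem \ref{thm:Ber}, a smooth quasi--contact submanifold, and a transversality count gives $\operatorname{codim}_\R B=4$, which is condition (2) up to choosing the compatible chart. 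The projectivized map $f_k=[s_{k,0}:s_{k,1}]:M\setminus B\to\CP^1$ is then the candidate; away from $B$ it is well defined, and near $B$ the $\varepsilon$--transversality of $(s_{k,0},s_{k,1})$ together with the bound $|\bbd s_k|=O(k^{-1/2})$ forces the local model $f(z_1,\dots,z_n,t)=z_2/z_1$ in a compatible chart, using Lemma \ref{lem:antilin} to see that the relevant kernels are symplectic.

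Next I would address conditions (1), (3) and (4) simultaneously via the derivative of the quotient. The submersion condition for $f_k$ on $M\setminus(B\cup C)$ and the contact/quasi--contact condition on the fibers $f_k^{-1}(P)$ amount to $\varepsilon$--transversality along $\xi$ of the section $\partial_D s_{k,0}\otimes s_{k,1}-s_{k,0}\otimes\partial_D s_{k,1}$ for the appropriate complex distribution $D\subset\xi$, and this is exactly what Theorem \ref{thm:Ber2} (equivalently Theorem \ref{thm:mart} transported through the symplectization as in the proof of Theorem \ref{thm:Ber}) provides: one may perturb $s_k$ in $C^2$ by less than $\delta$ so that this Jacobian--type section becomes uniformly transverse to zero along $\xi$. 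The set $C$ is then defined as the zero locus of that section intersected with the fibres; uniform transversality makes it a finite union of smooth curves, transverse to $\xi$, and a local Morse--type normal form argument at these critical points—expanding $s_{k,0},s_{k,1}$ to second order in a compatible Darboux chart and using the asymptotic holomorphicity to discard the antiholomorphic and Reeb--direction error terms—yields the model $f(z,s)=f(p)+z_1^2+\dots+z_n^2+g(s)$ of condition (3). That the fibres are Poincaré dual to $a$ is immediate since $f_k^{-1}(P)=Z(s_{k,0}-P_1 s_{k,1})$ for generic $P=[P_0:P_1]$ and $c_1(V\otimes L^{\otimes k})=c_1(V)=a$ on the nose, $L$ being topologically trivial.

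The main obstacle is the normal form at the critical curves, i.e. establishing condition (3). Unlike the symplectic Lefschetz pencil case, here the sections are not honestly holomorphic: the derivative in the Reeb direction $R$ is not controlled by the asymptotically holomorphic bounds, so the naive complex--Morse--lemma argument breaks. One must run a quantitative argument showing that, after the Theorem \ref{thm:Ber2} perturbation, the Hessian of $f_k$ in the $\xi$--directions at a critical point is, in $g_k$--norm, uniformly nondegenerate and $O(k^{-1/2})$--close to a complex quadratic form, so that a parametrized Morse lemma (with the curve parameter $s$ playing the role of $g(s)$, a genuine submersion because $C$ is transverse to $\xi$) applies uniformly in $k$; controlling the interaction of the Reeb direction with the $\xi$--Hessian is the delicate point, and I expect to handle it by the same symplectization trick used for Theorem \ref{thm:Ber}, working upstairs in $S(M)$ where everything is genuinely asymptotically holomorphic and then restricting to $M\times\{0\}$ via Proposition \ref{ref:compare}. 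The transverse self--intersections of $f_k(C)$ in $\CP^1$ in condition (1) then follow from a further generic $C^2$--small perturbation, as in the symplectic case \cite{Do99}.
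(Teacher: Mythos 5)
Your overall scheme is the paper's: a pair of asymptotically holomorphic sections of $V\otimes L^{\otimes k}\otimes\C^2$ made uniformly transverse by Theorem \ref{thm:Ber}, the Jacobian--type section $\partial s_{k,0}\otimes s_{k,1}-s_{k,0}\otimes\partial s_{k,1}$ made transverse by Theorem \ref{thm:Ber2}, and local perturbations at the base locus and at the critical set. Two steps, however, do not close as you state them. First, the local model (2) of Definition \ref{def:almost_pencil} at the base locus is \emph{not} forced by $\varepsilon$--transversality plus the $\bar\partial$ bounds. Lemma \ref{positi} shows that, besides $T_pB\cap\xi_p$ being symplectic (which does follow from Lemma \ref{lem:antilin}), one must arrange that $d\beta$ restricted to the symplectic orthogonal of $T_pB\cap\xi_p$ is a \emph{positive} $(1,1)$--form for the complex structure that $\nabla s_k(p)$ induces on that orthogonal; this positivity is an extra condition and the paper achieves it by an explicit $O(k^{-1/2})$ perturbation of $s_k$ near $B$ (Step 2 of the proof). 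Second, you never arrange $\varepsilon$--transversality of the single component $s_{k,0}$ (the second item of Proposition \ref{propo:nice}). That transversality is what Lemma \ref{far} uses to show that the bad set $\Gamma=\{|\partial F_k|\le|\bar\partial F_k|\}$ stays a uniform distance $\eta$ from the pole divisor $W_\infty=\{s_{k,0}=0\}$; without it you cannot conclude that the critical set $\Delta$ is a finite union of compact curves, nor control the derivatives of $F_k=s_{k,1}/s_{k,0}$ where the Hessian perturbation is performed. It is repairable by one extra application of Theorem \ref{thm:Ber} to $s_{k,0}$ alone before invoking Theorem \ref{thm:Ber2}, exactly as in Proposition \ref{propo:nice}.

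On the critical--curve normal form, the paper does not pass through the symplectization. It first confines $\Gamma$ to small disjoint tubular neighborhoods of the components of $\Delta$ (Proposition \ref{disjoint}), trivializes each such neighborhood as an open set in $S^1\times\C^n$ with the angle estimate $\angle_M(\xi_k,\xi_h)=O(|z|k^{-1/2})$ against the integrable horizontal distribution $\xi_h$, and then replaces $F_k$ by the cut--off interpolation $\beta_\rho\bigl(w(s)+H(s,z)\bigr)+(1-\beta_\rho)F_k$, where $H$ is the complex Hessian and $w$ is a function of the circle parameter; the content is the lemma showing $|\partial_k\widetilde F_k|>|\bar\partial_k\widetilde F_k|$ off $\gamma$ after this perturbation. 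Your symplectization--plus--parametrized--Morse--lemma route is plausible, but even granting it you still need the confinement statement for $\Gamma$ and the genericity of the $S^1$--parameter function $w$, which is where the transverse self--intersections of $f(C)$ in condition (1) and the diagonalization of the quadratic form in condition (3) actually come from.
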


\noindent This existence result can be readily extended to a more general notion of quasi--contact structures, i.e. triples of objects $(M, \xi, \omega)$, with $\xi$ a codimension--$1$ cooriented distribution and $\omega$ a closed $2$--form of integral class such that $(\xi,\omega)$ is a symplectic bundle. In \cite{Ma09} the theory is developed for these objects, though no further applications have been found in that more general setting.\\

The remaining part of this section is dedicated to the proof of Theorem \ref{thm:exist_pencil}. The strategy mimics the construction of Lefschetz pencils in projective geometry: we will produce a pair $(s_0,s_1)$ of suitable sections of a complex line bundle, thought of as a basis for a $1$--dimensional linear system, and use them to map the quasi--contact manifold onto $\CP^1$. As in Section \ref{sect:ob}, the asymptotically holomorphic theory from Section \ref{section:asymp} will provide the sections. Observe that in this occasion we will produce a pair of sections and there should be further control for the behaviour of the their quotient.\\

\noindent The initial data to obtain the sections is a quasi--contact form $\beta$, a compatible almost complex structure $J$ for the symplectic bundle $(\xi, d\beta)$, a sequence of metrics $g_k$ and the prequantizable bundle $L=M \times \C$ associated to the connection $\nabla= d - i\beta$. In order to prescribe the Poincar\'e dual of the fibers, let $V$ be a fixed hermitian line bundle with a connection such that the associated curvature $\Theta_V$ satisfies $[\Theta_V] = a$. Then the existence theorems from Section \ref{section:asymp} allow us to prove the following
\begin{proposition} \label{propo:nice}
With the data as described above, there exists an asymptotically holomorphic sequence of sections
$$s_k=(s_{k,0}\oplus s_{k,1}): M \longrightarrow V \otimes L^{\otimes k} \otimes \C^2$$ and two fixed constants $\varepsilon, \varepsilon'>0$ satisfying that
\begin{enumerate}
\item[-] $s_k$ is $\varepsilon$--transverse to zero along $\xi$ over $M$.
\item[-] $s_{k,0}$ is $\varepsilon$--transverse to zero along $\xi$ over $M$,
\item[-] Consider the set $W_{\infty}^{s_k}= \{ p\in M: s_{k,0}(p)=0\}$. Then the holomorphic part $\partial\left(\frac{s_{k,1}}{s_{k,0}}\right)$ of the covariant derivative is $\varepsilon'$--transverse to zero along $\xi$ over $M \setminus W_{\infty}^{s_k}$.
\end{enumerate}
\end{proposition}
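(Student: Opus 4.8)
The plan is to build $s_k$ in three nested stages, invoking at each stage one of the existence results of Section~\ref{section:asymp} for a perturbation small enough that the transversalities gained so far are preserved --- this is legitimate because $\varepsilon$--transversality along $\xi$ is $C^1$--stable, as observed after Lemma~\ref{lem:subquasi} --- and then to deduce the third of the three stated properties from the output of Theorem~\ref{thm:Ber2} by a linear--algebra argument. First I would apply Theorem~\ref{thm:Ber} to the rank--$2$ bundle $V\otimes\C^{2}$ and the zero reference sequence, obtaining an asymptotically holomorphic sequence $s_k^{(1)}=(s_{k,0}^{(1)},s_{k,1}^{(1)})$ that is $\varepsilon_1$--transverse to zero along $\xi$ over $M$; this is the first property. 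Next I would apply Theorem~\ref{thm:Ber} to the line bundle $V$ with reference sequence $s_{k,0}^{(1)}$ and a small $\delta_2>0$, obtaining $s_{k,0}^{(2)}$ which is $\varepsilon_2$--transverse to zero along $\xi$ over $M$ and satisfies $|s_{k,0}^{(2)}-s_{k,0}^{(1)}|_{C^2}\le\delta_2$; for $\delta_2$ small the pair $s_k^{(2)}=(s_{k,0}^{(2)},s_{k,1}^{(1)})$ stays $\tfrac12\varepsilon_1$--transverse along $\xi$. Finally I would apply Theorem~\ref{thm:Ber2} with $D=\xi$, line bundle $V$, reference $s_k^{(2)}$ and a small $\delta_3>0$, obtaining $s_k=(s_{k,0},s_{k,1})$ with $|s_k-s_k^{(2)}|_{C^2}\le\delta_3$ for which
$$N_k:=\partial_{\xi}s_{k,0}\otimes s_{k,1}-s_{k,0}\otimes\partial_{\xi}s_{k,1}$$
is $\varepsilon_3$--transverse to zero along $\xi$ over $M$. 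For $\delta_3$ small, $C^1$--stability keeps $s_k$ $\tfrac14\varepsilon_1$--transverse and $s_{k,0}$ $\tfrac12\varepsilon_2$--transverse along $\xi$, which gives the first two properties with $\varepsilon=\min(\tfrac14\varepsilon_1,\tfrac12\varepsilon_2)$.

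It then remains to extract the third property. Over $M\setminus W_{\infty}^{s_k}$ the ratio $f_k=s_{k,1}/s_{k,0}$ is a well defined $\C$--valued function, and the Leibniz rule for the holomorphic part of the covariant derivative yields $\partial_{\xi}f_k=-N_k/s_{k,0}^{2}$. I would split $M\setminus W_{\infty}^{s_k}$ into the bulk $\{|s_{k,0}|\ge c\}$ and the collar $\{0<|s_{k,0}|<c\}$, for a fixed $c\in(0,\varepsilon]$. On the bulk one divides the $\varepsilon_3$--transversality of $N_k$ by the now bounded, nowhere vanishing factor $s_{k,0}^{2}$, controlling the covariant derivative of $s_{k,0}^{-2}$ through the asymptotically holomorphic $C^1$-- and $C^2$--bounds; this gives $\varepsilon''$--transversality of $\partial_{\xi}f_k$ along $\xi$ there, with $\varepsilon''$ depending only on $c$, $\varepsilon_3$ and the uniform bounds. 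On the collar I would instead use the $\varepsilon$--transversality of $s_{k,0}$ along $\xi$, which forces $|\partial_{\xi}s_{k,0}|\ge\varepsilon/2$ there once the $O(k^{-1/2})$ antiholomorphic part of $\nabla_{\xi}s_{k,0}$ is absorbed; combined with the transversality of $s_k$ itself near the base locus $\{s_k=0\}$ (where $s_k$ is a small perturbation of a surjective $\C$--linear map), a short local computation gives $|N_k|\gtrsim|s_{k,0}|$ throughout the collar, hence $|\partial_{\xi}f_k|=|N_k|/|s_{k,0}|^{2}\gtrsim|s_{k,0}|^{-1}>c^{-1}$, so that the norm alternative in the definition of transversality along $\xi$ is satisfied. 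Taking $\varepsilon'$ below $\varepsilon''$ and below this collar bound gives the third property, and with it the proposition.

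The hard part is this last step. The quadratic section $N_k$ that Theorem~\ref{thm:Ber2} makes transverse is not literally $\partial_{\xi}f_k$, and transferring its transversality uniformly to the whole complement $M\setminus W_{\infty}^{s_k}$ forces one to understand how $f_k$ degenerates along $W_{\infty}^{s_k}$, and most delicately along the base locus $\{s_{k,0}=s_{k,1}=0\}$. The technical crux is to play the lower bound on $|\partial_{\xi}s_{k,0}|$ coming from the $\varepsilon$--transversality of $s_{k,0}$ against the blow--up of $s_{k,0}^{-2}$ and to invoke the local normal form; the complete bookkeeping, in a more general setting, is carried out in \cite{Ma09}.
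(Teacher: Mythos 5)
Your construction follows the paper's proof exactly: the same three--stage scheme of applying Theorem~\ref{thm:Ber} to the pair valued in the rank--$2$ bundle, then to the first component alone, and finally Theorem~\ref{thm:Ber2} to control $N_k=\partial s_{k,0}\otimes s_{k,1}-s_{k,0}\otimes\partial s_{k,1}$, with $C^1$--stability of estimated transversality preserving the earlier properties at each perturbation. The only difference is that you spell out the passage from the transversality of $N_k$ to that of $\partial(s_{k,1}/s_{k,0})$, including the collar near $W_{\infty}^{s_k}$ where the ratio blows up; the paper simply asserts this step (and in fact only claims it on $\{|s_{k,0}|>\varepsilon_3/2\}$), so your bulk/collar estimate is a legitimate filling--in of that detail rather than a departure from the argument.
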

\begin{proof}
The $\varepsilon$--transversality is a $C^1$--stable property, thus we may systematically perform $C^1$--perturbations and it will be preserved. Consider the asymptotically holomorphic null--constant sequence of sections $(0): M \longrightarrow L^{\otimes k} \otimes \C^2$, then Theorem \ref{thm:Ber} provides an asymptotically holomorphic sequence of sections $s_k$ which are $\varepsilon_1$--transverse to zero.\\

\noindent Let $\delta= \varepsilon_1/2$ and apply Theorem \ref{thm:Ber} to the sequence $s_{k,0}:M\longrightarrow L^{\otimes k}$ in order to obtain an asymptotically holomorphic sequence of sections $\sigma_{k,0}: M \longrightarrow L^{\otimes k}$ such that $|s_{k,0}-\sigma_{k,0}|_{C^2} \leq \delta$ and $\varepsilon_2$--transverse to zero along $\xi$. The pair $\tilde{s}_k=(\tilde{s}_{k,0},\tilde{s}_{k,1})=(\sigma_{k,0}, s_{k,1}): M \longrightarrow L^{\otimes k} \otimes \C^2$ satisfies the first two transversality properties for $\varepsilon_3= min(\delta, \varepsilon_2)$.\\

\noindent Observe that $\partial \left(\frac{\tilde{s}_{k,1}}{\tilde{s}_{k,0}} \right)$ is an asymptotically holomorphic sequence on the open set $U_{\varepsilon}^{s_{k}}=\{p\in M: |s_{k,0}| > \varepsilon \}$ since it is
$$ \partial \tilde{s}_{k,1} \otimes \tilde{s}_{k,0} - \tilde{s}_{k,1} \otimes \partial  \tilde{s}_{k,0} : M \longrightarrow \xi^{1,0}\otimes L^{\otimes k} \otimes V \otimes \C^2. $$
Finally, apply Theorem \ref{thm:Ber2} to the section $\tilde{s}_k$ and the constant $\varepsilon_3/2$: there exists an asymptotically holomorphic sequence $\tilde{\sigma}_k$ such that  $\partial \tilde{\s}_{k,1} \otimes \tilde{\s}_{k,0} - \tilde{\s}_{k,1} \otimes \partial  \tilde{\s}_{k,0}$  is $\varepsilon_4$--transverse to zero along $\xi$ over $M$. This sequence is satisfies that $\partial \left(\frac{\tilde{\s}_{k,1}}{\tilde{\s}_{k,0}} \right)$ is $\varepsilon'$--transverse to zero along $U_{\varepsilon_3/2}^{\s_k}$. Thus, the sequence $\tilde{\s}_k$ satisfies the required properties for the chosen $\varepsilon'$ and $\varepsilon=\varepsilon_3/2$.
\end{proof}

\subsection{Existence of quasi--contact pencils.}
Let us briefly describe the argument. Apply Proposition \ref{propo:nice} to the data induced by the quasi--contact manifold $(M,\xi,d\beta)$ and $a\in H^2(M,\Z)$ as previously explained. This provides a pair of suitably transverse sections inducing the potential quasi--contact pencil. The first step is the structure of the fibers, which should satisfy (4) in Definition \ref{def:almost_pencil}. Secondly we focus on the base locus and obtain the required local model. Finally, it is ensured that the Morse model around the singularities can be achieved.\\

\noindent {\it Step 1: Analysis.} Since the sections $s_k$ provided by Proposition \ref{propo:nice} are $\varepsilon$--transverse, the zero set $B_k=Z(s_k)$ is a codimension $4$ quasi--contact manifold. Also, the set $W_{\infty}^{s_k}$ is a codimension $2$ quasi--contact submanifold. Let us define the sequence of maps
\begin{eqnarray*}
F_k: M \setminus W_{\infty}^{s_k} &\longrightarrow & \C \\
p & \longmapsto & \frac{s_{k,1}}{s_{k,0}}.\end{eqnarray*}
These are our candidates for quasi--contact pencil structures. Define the set
$$\Gamma=\{ p\in M: |\partial F_k|\leq |\bar{\partial} F_k| \}.$$
If we are able to show that $\Gamma= \{p\in M: d_\xi F_k=0 \}$ then the fibers of $F_k$ will be quasi--contact submanifolds at the regular points. We will actually justify that the set $\Gamma$ lies arbitrarily close to the critical curves. First, a bound from below for the norm of $s_{k,0}$ on $\Gamma$:
\begin{lemma} \label{far}
There is a constant $\eta>0$, depending only on $\varepsilon'$ and $\varepsilon$, such that if $k$ is
large enough, then $|s_{k,0}|\geq \eta$ on $\Gamma$.
\end{lemma}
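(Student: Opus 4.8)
The plan is to argue by contradiction: suppose that on a point $p\in\Gamma$ we have $|s_{k,0}(p)|<\eta$ for a constant $\eta$ to be fixed, and derive a violation of one of the transversality estimates from Proposition \ref{propo:nice}. The key observation is that on $\Gamma$ the holomorphic part $\partial F_k$ is dominated by the antiholomorphic part $\bar\partial F_k$, and the latter is small by asymptotic holomorphicity. Concretely, writing $F_k=s_{k,1}/s_{k,0}$ one has, off $W_\infty^{s_k}$,
$$\partial F_k=\frac{\partial s_{k,1}\otimes s_{k,0}-s_{k,1}\otimes\partial s_{k,0}}{s_{k,0}^{2}},\qquad \bar\partial F_k=\frac{\bar\partial s_{k,1}\otimes s_{k,0}-s_{k,1}\otimes\bar\partial s_{k,0}}{s_{k,0}^{2}},$$
so on $\Gamma$ the $\varepsilon'$--transversality of the numerator $\partial s_{k,1}\otimes s_{k,0}-s_{k,1}\otimes\partial s_{k,0}$ combines with $|\partial F_k|\le|\bar\partial F_k|$ to force $|s_{k,0}|^{2}\cdot|\bar\partial F_k|$ to be bounded below in the appropriate transversality sense, while the asymptotically holomorphic bounds $|\bar\partial s_{k,i}|=O(k^{-1/2})$, together with $|s_{k,1}|=O(1)$, make the numerator of $\bar\partial F_k$ of size $O(k^{-1/2})$ as soon as $|s_{k,0}|$ is bounded below by a fixed constant.

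The steps I would carry out are, first, fix the point $p\in\Gamma$ and assume $|s_{k,0}(p)|<\eta$; combine this with $\varepsilon$--transversality of $s_k$ along $\xi$ to conclude that $\nabla_\xi s_k(p)$ is $\varepsilon$--transverse, hence in particular $s_{k,0}(p)$ cannot itself vanish (so $F_k$ is defined near $p$) — more precisely we may also invoke $\varepsilon$--transversality of $s_{k,0}$ to see that if additionally $|s_{k,0}(p)|<\varepsilon$ then $\nabla_\xi s_{k,0}(p)$ is $\varepsilon$--transverse, which will be needed to control $|\partial s_{k,0}|$ from below when we estimate the numerator of $\partial F_k$. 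Second, invoke property (iii) of Proposition \ref{propo:nice}: the holomorphic part $\partial(s_{k,1}/s_{k,0})$ is $\varepsilon'$--transverse to zero along $\xi$ over $M\setminus W_\infty^{s_k}$, which at $p$ (together with Lemma \ref{lem:useful}) furnishes a $2$-dimensional subspace $W\subset\xi_p$ on which $|\partial F_k(w)|\ge\varepsilon'|w|$. Third, on that subspace use $|\partial F_k|\le|\bar\partial F_k|$ (valid since $p\in\Gamma$) to get $|\bar\partial F_k(w)|\ge\varepsilon'|w|$ for $w\in W$, and expand $\bar\partial F_k$ in terms of the $s_{k,i}$: estimate $|\bar\partial F_k|\le |s_{k,0}|^{-2}\bigl(|\bar\partial s_{k,1}||s_{k,0}|+|s_{k,1}||\bar\partial s_{k,0}|\bigr)\le |s_{k,0}|^{-2}\cdot C k^{-1/2}$ using the asymptotically holomorphic bounds and $|s_{k,0}|,|s_{k,1}|=O(1)$. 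Combining, $\varepsilon'\le C k^{-1/2}|s_{k,0}|^{-2}$, i.e. $|s_{k,0}|^{2}\le C k^{-1/2}/\varepsilon'$, which for $k$ large contradicts nothing by itself — so the point is to note the contradiction arises only if we have assumed $|s_{k,0}|$ is not too small; hence we instead turn the inequality around: it \emph{shows} that $|s_{k,0}|^2\le Ck^{-1/2}/\varepsilon'\to 0$, so in fact on $\Gamma$ we cannot have a uniform lower bound unless... wait — this is backwards, and the honest statement is the reverse: on $\Gamma$ the estimate $|\bar\partial F_k|\ge \varepsilon'$ together with the numerator bound forces $|s_{k,0}|^2\le Ck^{-1/2}/\varepsilon'$, i.e. $|s_{k,0}|$ small, which is the \emph{opposite} of the claimed lemma — so I must have the inequality direction wrong and the correct route uses the numerator of $\partial F_k$, not $\bar\partial F_k$, for the lower bound.

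Let me restate the correct logic: the $\varepsilon'$--transversality in (iii) gives $|\partial F_k(w)|\ge \varepsilon'|w|$ on $W$, hence $|\partial F_k|\ge\varepsilon'$ in operator norm; but $\partial F_k = (\text{num})/s_{k,0}^2$ where the numerator $N_k:=\partial s_{k,1}\otimes s_{k,0}-s_{k,1}\otimes\partial s_{k,0}$ satisfies $|N_k|=O(1)$ by the asymptotically holomorphic bounds on the $s_{k,i}$ and their first derivatives; therefore $\varepsilon'\le |\partial F_k|\le |N_k|/|s_{k,0}|^2\le C/|s_{k,0}|^2$, giving $|s_{k,0}|^2\ge \varepsilon'/C$, i.e. $|s_{k,0}|\ge \eta$ with $\eta=\sqrt{\varepsilon'/C}$, and crucially here the condition $p\in\Gamma$ is not even needed for this particular bound — it is purely property (iii) plus the $O(1)$ bound on $N_k$. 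The subtlety that does use $p\in\Gamma$, and which I expect to be the main obstacle, is making sure the constant $C$ bounding $|N_k|$ is genuinely uniform in $k$ and in the point (this is where the $g_k$-metric normalizations and the fact that $\partial$ is the $\xi$-restricted part of $\nabla$ enter), and handling the possibility that $p$ lies very close to $W_\infty^{s_k}$ where $F_k$ blows up — but since $\partial F_k$ is controlled over all of $M\setminus W_\infty^{s_k}$ by (iii), and the bound we extract is on $|s_{k,0}|$ from below, this is automatically consistent. So the proof reduces to: extract the transverse $2$-plane $W$ from (iii) via Lemma \ref{lem:useful}, bound $|N_k|=O(1)$ uniformly using the asymptotically holomorphic estimates (measured in $g_k$), and solve $\varepsilon'\le |N_k|/|s_{k,0}|^2$ for $|s_{k,0}|$, yielding $\eta$ depending only on $\varepsilon'$ and the universal constants in the asymptotically holomorphic bounds (and, through those, on $\varepsilon$).
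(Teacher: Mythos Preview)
Your final argument contains two genuine errors that make the approach unworkable.

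First, the algebra at the end is inverted. From $\varepsilon'\le |\partial F_k| = |N_k|/|s_{k,0}|^{2}\le C/|s_{k,0}|^{2}$ you conclude $|s_{k,0}|^{2}\ge \varepsilon'/C$, but multiplying $\varepsilon'\le C/|s_{k,0}|^{2}$ through by $|s_{k,0}|^{2}$ gives $|s_{k,0}|^{2}\le C/\varepsilon'$, an \emph{upper} bound. Your own parenthetical remark that ``the condition $p\in\Gamma$ is not even needed'' is the red flag: if your inequality were correct it would yield $|s_{k,0}|\ge\eta$ on all of $M\setminus W_\infty^{s_k}$, contradicting the fact that $W_\infty^{s_k}=Z(s_{k,0})$ is a nonempty codimension--$2$ submanifold.

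Second, and more fundamentally, property (iii) does \emph{not} assert $|\partial F_k|\ge\varepsilon'$ pointwise. It says the section $\partial F_k$ is $\varepsilon'$--transverse to zero along $\xi$, which (Definition 2.5) means at each point either $|\partial F_k|>\varepsilon'$ \emph{or} $\nabla_\xi(\partial F_k)$ is $\varepsilon'$--transverse as a linear map. On the set $\Delta\subset\Gamma$ one has $\partial F_k=0$ exactly, so no lower bound on $|\partial F_k|$ is available from (iii) there; Lemma~\ref{lem:useful} applied to $\partial F_k(p)$ as a linear functional is not what (iii) provides.

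The paper's route avoids (iii) entirely and instead uses (i) and (ii) to show directly that wherever $|s_{k,0}|$ is small one has $|\partial F_k|>|\bar\partial F_k|$, hence such points lie outside $\Gamma$. Concretely: if $|s_k(p)|<\varepsilon$, the $\varepsilon$--transversality of $s_k$ produces a unit $v\in\xi_p$ with $|\partial_v s_{k,0}|\ge\varepsilon$ and $\partial_v s_{k,1}=0$, giving $|N_k(p)|\ge\tfrac{\varepsilon}{2}|s_k(p)|$, while the antiholomorphic numerator is $O(k^{-1/2})|s_k(p)|$. If instead $|s_k(p)|\ge\varepsilon$ but $|s_{k,0}(p)|<\eta\le\varepsilon$, then $|s_{k,1}(p)|\ge\varepsilon/2$ and the $\varepsilon$--transversality of $s_{k,0}$ gives $|\partial s_{k,0}(p)|\ge\varepsilon$; the reverse triangle inequality then bounds $|N_k(p)|\ge \tfrac{\varepsilon^2}{2}-c\eta$, which is $\ge\varepsilon^2/4$ once $\eta=\varepsilon^2/4c$. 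In both cases the holomorphic numerator dominates the $O(k^{-1/2})$ antiholomorphic one for $k$ large, forcing $p\notin\Gamma$. The membership $p\in\Gamma$ is thus used essentially, and the relevant transversality is that of $s_k$ and $s_{k,0}$, not of $\partial F_k$.
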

\begin{proof}
The section $s_k$ is $\varepsilon$--transverse to zero along $\xi$ and thus at any point $p \in M$ with $|s_k(p)| <\varepsilon$ the map $\partial s_k(p)$ is $\varepsilon$--transverse. Without loss of generality suppose that $|s_{k,0}| \leq |s_{k,1}|$. By surjectivity there exists a unitary vector $v\in \xi_p$ such that $|\partial_v s_{k,0}(p)|\geq \varepsilon$ and $|\partial_v s_{k,1}(p)|=0$ and thus
\begin{equation}
|\partial s_{k,1}(p) \otimes s_{k,0}(p) - s_{k,1}(p) \otimes \partial  s_{k,0}(p)|\geq \frac{|s_k(p)|}{2} \varepsilon\label{eq:holo1}
\end{equation}
The asymptotically holomorphic bounds impose
\begin{equation}
|\bar\partial s_{k,1}(p) \otimes s_{k,0}(p) - s_{k,1}(p) \otimes \bar\partial  s_{k,0}(p)|\leq ck^{-1/2} |s_k(p)|\label{eq:antiholo1}
\end{equation}
and so combining the inequalities (\ref{eq:holo1}) and (\ref{eq:antiholo1}), for $k$ large enough, we obtain
$$
|\partial s_{k,1}(p) \otimes s_{k,0}(p) - s_{k,1}(p) \otimes \partial  s_{k,0}(p)|> |\bar\partial s_{k,1}(p) \otimes s_{k,0}(p) - s_{k,1}(p) \otimes \bar\partial  s_{k,0}(p)|. $$
This implies $|\partial F_k(p) | > |\bar \partial F_k(p) |$ at any point $p\in M\setminus W^{s_k}_{\infty}$ with $|s_k(p)| <\varepsilon$.\\

\noindent Consider $p\in M$ with $|s_k(p)|>\varepsilon$, say $|s_{k,1}(p)|\geq\varepsilon/2$. Let $\eta\leq\varepsilon$ and suppose further that $|s_{k,0}(p)| < \eta$, by $\varepsilon$--transversality of $s_{k,0}$ the inequality $|\partial s_{k,0}(p)| > \varepsilon$ holds. At the same time the asymptotically holomorphic bounds require $|\partial s_{k,1}(p)| \leq c$, for some fixed constant $c>0$. Fix $\eta=\varepsilon^2/4c$, then the reverse triangle inequality yields
$$|\partial s_{k,1}(p) \otimes s_{k,0}(p) - s_{k,1}(p) \otimes \partial  s_{k,0}(p)|\geq \frac{\varepsilon^2}{4}.$$
Again (\ref{eq:antiholo1}), for $k$ large enough, gives $|\partial F_k(p) | > |\bar \partial F_k(p) |$.
\end{proof}

Let $\Delta\subset\Gamma$ be the set of points where $\partial F_k=0$. The connected components of
$\Delta$ form a discrete set of smooth transverse curves since $\partial F_k$ satisfies the adequate transversality condition. Observe that $\pi_0(\Delta)$ is finite because $\Delta\subset\Gamma$ and the set $\Gamma$ is contained in the complementary
of a $\tau$--neighborhood of the compact manifold $W_{\infty}=Z(s_k^0)$, for $\tau>0$ a constant
small enough, after Lemma \ref{far}. In order to understand the behaviour of the set $\Gamma$, consider the set $\Omega_{\eta}= \{ p\in C, |s_0(p)|> \eta/2 \}$. The following statement describes the neighborhoods of the elements in $\Delta$ and in particular the geometry of $\Gamma$:
\begin{proposition} \label{disjoint}
With the above notation, there exists a uniform constant $\rho_0>0$ such that the $\rho_0$--neighborhoods
of each connected component $\gamma_i\in\Delta$ are disjoint and contained
in $\Omega_{\eta}$. Further, given any $\rho<\rho_0$, for $k=k(\rho)$ large enough, the set $\Gamma$ is contained in a $\rho$--neighborhood of $\Delta$.
\end{proposition}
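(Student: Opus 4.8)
The plan is to establish the two assertions by a combination of (a) quantitative estimates on the size of the holomorphic derivative $\partial F_k$ near $\Delta$, supplied by the $\varepsilon'$--transversality of $\partial\big(\tfrac{s_{k,1}}{s_{k,0}}\big)$ along $\xi$ from Proposition \ref{propo:nice}, and (b) the lower bound $|s_{k,0}|\geq \eta$ on $\Gamma$ from Lemma \ref{far}, which confines $\Gamma$ away from $W_\infty^{s_k}$. For the first assertion, fix a component $\gamma_i$ of $\Delta$. On the region $\Omega_\eta=\{|s_{k,0}|>\eta/2\}$ the map $F_k$ and its derivatives are controlled (the denominator is bounded away from zero), so $\partial F_k$ is a genuine asymptotically holomorphic section of $\xi^{1,0*}\otimes V\otimes L^{\otimes k}$ there, and it is $\varepsilon'$--transverse to zero along $\xi$. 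At a point $p\in\gamma_i$ we have $\partial F_k(p)=0$; by $\varepsilon'$--transversality the linear map $\nabla_\xi(\partial F_k)(p):\xi_p\to (\xi^{1,0*}\otimes V\otimes L^{\otimes k})_p$ has a right inverse of norm $\leq (\varepsilon')^{-1}$, hence $|\partial F_k(q)|\gtrsim \varepsilon'\, \mathrm{dist}_{g_k}(q,\gamma_i)$ for $q$ in a $g_k$--ball of some uniform radius around $p$ (using that the second derivatives of $\partial F_k$ are $O(1)$ in the $g_k$--metric to control the error term in Taylor's expansion). Since on $\Gamma$ one has $|\partial F_k|\leq |\bar\partial F_k|=O(k^{-1/2})$, this forces any point of $\Gamma$ lying in that ball to be within $O(k^{-1/2}/\varepsilon')$ of $\gamma_i$; rescaling back from $g_k$ to $g$, and taking $k$ large, this gives the stated $\rho$--neighborhood containment for $\Gamma$, and simultaneously shows that distinct components $\gamma_i,\gamma_j$ cannot come $g_k$--closer than a uniform constant (else $\partial F_k$ would be forced small along a path connecting them while $\nabla_\xi\partial F_k$ stays invertible — contradicting that $\partial F_k$ vanishes at both endpoints only if the path is short), so their $\rho_0$--neighborhoods are disjoint for a uniform $\rho_0>0$.

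For the containment in $\Omega_\eta$, I would note that $\Delta\subset\Gamma$ and invoke Lemma \ref{far}: on $\Gamma$ we have $|s_{k,0}|\geq\eta$, and since $s_{k,0}$ is asymptotically holomorphic with $|\nabla s_{k,0}|=O(1)$ in the $g_k$--metric, $|s_{k,0}|$ cannot drop below $\eta/2$ within a $g_k$--ball of uniform radius around any point of $\Delta$; choosing $\rho_0$ smaller than that radius (in the rescaled metric $g$) places the whole $\rho_0$--neighborhood of each $\gamma_i$ inside $\Omega_\eta$. One must also record that $\Gamma$ itself lies in $\Omega_{\eta}$ away from a controlled region, which again follows from Lemma \ref{far} combined with the Lipschitz bound on $|s_{k,0}|$; this is what makes $F_k$, $\partial F_k$ and $\bar\partial F_k$ all well-defined and estimable on the relevant sets.

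The main obstacle is the rescaling bookkeeping between the $g_k=kg$ metric (in which all the asymptotically holomorphic estimates and the transversality constants are stated) and the fixed metric $g$ (in which $\rho_0$ and $\rho$ are measured). The $\varepsilon'$--transverse-to-zero property yields a linear lower bound with slope $\sim\varepsilon'$ in the $g_k$--metric, while the obstruction $|\bar\partial F_k|=O(k^{-1/2})$ is also a $g_k$--statement; the quotient of these scales is $O(k^{-1/2})$ in $g_k$, hence $O(k^{-1})$ in $g$, which is what drives $\Gamma$ into an arbitrarily small $g$--neighborhood of $\Delta$ as $k\to\infty$ — but one has to be careful that the radius of validity of the Taylor estimate for $\partial F_k$ (where the $O(1)$ bound on $\nabla^2(\partial F_k)$ is used) is itself uniform in the $g_k$--metric, i.e. shrinks like $k^{-1/2}$ in $g$, so that the argument closes consistently. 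Once the scales are matched, the disjointness of the $\rho_0$--neighborhoods and the $\rho$--neighborhood capture of $\Gamma$ both drop out; finiteness of $\pi_0(\Delta)$ was already observed before the statement using compactness of $M$ and the fact that $\Gamma$ avoids a fixed neighborhood of $W_\infty$.
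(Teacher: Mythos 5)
Your estimates around $\Delta$ and the use of Lemma \ref{far} to place everything in $\Omega_\eta$ are in the right spirit, and the disjointness of the $\rho_0$--neighborhoods does follow from the linear lower bound $|\partial F_k(q)|\gtrsim \varepsilon'\,\mathrm{dist}_{g_k}(q,\gamma_i)$ valid on a uniform $g_k$--ball around each point of $\gamma_i$ (this bound forces the zero set of $\partial F_k$ inside that ball to coincide with $\gamma_i$, so distinct components stay a uniform $g_k$--distance apart). But your argument for the second assertion runs in the wrong direction and leaves a genuine gap: expanding $\partial F_k$ around a point of $\Delta$ only controls those points of $\Gamma$ that \emph{already} lie in a uniform-radius ball around $\Delta$. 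It says nothing about a hypothetical point $p\in\Gamma$ lying outside all such balls, and nothing in your write-up excludes this. The statement to be proved is precisely that no such point exists.

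The missing step is the existence half of the argument, which is what the paper (following Proposition 9 of Donaldson's Lefschetz pencil paper) actually relies on: at an arbitrary $p\in\Gamma$ one has $|\partial F_k(p)|\leq|\bar\partial F_k(p)|=O(k^{-1/2})<\varepsilon'$ for $k$ large, so by the transversality dichotomy $\nabla_\xi(\partial F_k)$ is $\varepsilon'$--transverse at $p$ and on the whole $g_k$--ball of radius comparable to $\varepsilon'$ around it; a quantitative inverse function theorem (equivalently, a Newton iteration or gradient flow decreasing $|\partial F_k|$ at rate $\geq\varepsilon'$) then \emph{produces} a zero of $\partial F_k$ within $g_k$--distance $O(k^{-1/2}/\varepsilon')$ of $p$, i.e.\ a point of $\Delta$ near $p$. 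That is what drives all of $\Gamma$ into the $\rho$--neighborhood of $\Delta$ for $k=k(\rho)$ large. Your Taylor expansion at points of $\Delta$ cannot be a substitute for this, because it presupposes the nearby zero whose existence is the content of the claim. Once this effective IFT step is added (and one checks, as you began to, that the relevant balls stay inside $\Omega_\eta$ so that $F_k$ and its derivatives are defined and the transversality of $\partial(s_{k,1}/s_{k,0})$ from Proposition \ref{propo:nice} applies there), the rest of your scale bookkeeping between $g_k$ and $g$ is fine.
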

This is essentially Proposition 9 in \cite{Do99}, instead of the distance to a finite number of points we use the distance from a point to a curve. Geometrically, in a point of $\Gamma$ the norm $|\partial F_k|$ is bounded by $|\overline{\partial} F_k|=O(k^{-1/2})$ and thus can be arbitrarily small, transversality then provides a solution for the equation $\partial F_k=0$ and the norm being arbitrarily small ensures its existence nearby. The detailed argument requires an explicit form of the Inverse Function Theorem, cf. \cite{Do99}.\\

\noindent {\it Step 2: Perturbation at the base point set.}
The sequence $\partial F_k$ will be perturbed in arbitrarily small neighborhoods
of the alleged base locus $B_k$ in order to achieve the local model in Definition \ref{def:almost_pencil}. To ease notation we write $B=B_k$, as $k$ is thought as fixed if large enough. We describe the local model in terms of the equation for the tangent space at a point $p\in B$
$$ \nabla s_k(p)= \nabla s_0^k(p) \oplus \nabla s_1^k(p): T_pM\longrightarrow L_p^{\otimes k} \oplus
L_p^{\otimes k}.$$
If $T_pB\subset\xi_p$ were a symplectic subspace, the $\R$--linear map $\nabla s_k(p)$ provides an isomorphism as $\R$--vector spaces of the symplectic orthogonal with the $\C$--vector space $L_p^{\otimes k} \oplus
L_p^{\otimes k}$. In particular the symplectic orthogonal is endowed with a complex structure. Let us prove a linear characterization of the model with respect to the base locus:
\begin{lemma} \label{positi}
Let $p\in B$. Then $F_k$ can be represented around $p$ in the standard local model of Definition
\ref{def:almost_pencil} if and only if $T_pB\cap (\xi_p,d\beta_p)$ is a symplectic subspace and the
restriction of $d\beta$ to the symplectic orthogonal $N_p=(T_pB\cap \xi)_x^{\perp d\beta}$ is a
positive form of type $(1,1)$ with respect to the complex structure of $N_p$ induced by $\nabla s_k(p)$.
\end{lemma}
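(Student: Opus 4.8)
The plan is to unwind the two directions of the equivalence by carefully tracking how the local chart translates into linear-algebraic conditions on the derivative $\nabla s_k(p)$. First I would fix a point $p\in B$ and a compatible coordinate chart $\phi: (U,p) \longrightarrow (\C^n\times\R, 0)$, and recall that by definition of compatibility the distribution $\xi_p$ is pushed forward to $\C^n\times\{0\}$ with $\phi_*d\beta_p$ a positive $(1,1)$-form. In such a chart the model condition (2) in Definition \ref{def:almost_pencil} says $B = \{z_1=z_2=0\}$ and $f = z_2/z_1$, so $F_k$ agrees with $z_2/z_1$ after the identification; the key observation is that, since $s_{k,0}(p)=s_{k,1}(p)=0$ on $B$, the value of $F_k = s_{k,1}/s_{k,0}$ is governed purely by the linearizations $\nabla_\xi s_{k,0}(p)$ and $\nabla_\xi s_{k,1}(p)$ restricted to the normal directions. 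I would therefore reduce everything to the statement that the $\R$-linear map $(\nabla s_{k,0}(p), \nabla s_{k,1}(p)): T_pM \to L_p^{\otimes k}\oplus L_p^{\otimes k}$ restricted to $T_pB^{\perp d\beta}\subset \xi_p$ is, in suitable terms, the coordinate projection $(z_1,z_2)$.

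For the ``if'' direction, assume $T_pB\cap\xi_p$ is symplectic and $d\beta|_{N_p}$ is a positive $(1,1)$-form for the complex structure $J_0$ on $N_p$ transported from $L_p^{\otimes k}\oplus L_p^{\otimes k}$ via $\nabla s_k(p)$. Since $\nabla s_k(p)$ is $\varepsilon$-transverse and $N_p$ is a complement to $\ker \nabla s_k(p) = T_pB$ inside $\xi_p\oplus\langle R\rangle$ — here one must check that the Reeb direction plays no role, using that $B$ is a quasi--contact submanifold transverse to $\xi$ — the restriction $\nabla s_k(p)|_{N_p}: N_p \to \C^2$ is a real-linear isomorphism. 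By construction it is $\C$-linear for $J_0$ on the source and the standard structure on $\C^2$, so I can choose coordinates $(z_1,z_2)$ on $N_p$ realizing it as the identity; extend these with coordinates along $T_pB$ and the Reeb direction and integrate (via the implicit function theorem, exactly as in Step 1 of the proof of Theorem \ref{thm:exist_open}) to a genuine chart in which $s_{k,0}, s_{k,1}$ become $z_1, z_2$ up to higher order, hence $F_k = z_2/z_1 + O(|z|)$; a further change of coordinates absorbs the higher-order terms. The positivity of $d\beta|_{N_p}$ is exactly what guarantees the chart can be taken compatible with the quasi--contact structure, i.e. $\phi_*d\beta_p$ of type $(1,1)$ and positive.

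For the ``only if'' direction I would run this backwards: if $F_k$ is represented in the standard model by a compatible chart, then in that chart $\xi_p = \C^n\times\{0\}$, $B = \{z_1=z_2=0\}$, and $d\beta_p$ is a positive $(1,1)$-form; reading off $T_pB = \{z_1=z_2=0\}\cap(\C^n\times\R)$ shows $T_pB\cap\xi_p$ is a complex — hence symplectic — subspace of $(\xi_p, d\beta_p)$, its symplectic orthogonal $N_p$ is the $(z_1,z_2)$-plane, and the complex structure induced by $\nabla s_k(p)$ on $N_p$ coincides with the ambient $J$ restricted there (because $\nabla s_k$ in these coordinates is the projection onto $(z_1,z_2)$, which is $\C$-linear); finally $d\beta|_{N_p}$ is the restriction of the positive $(1,1)$-form $d\beta_p$, hence itself positive of type $(1,1)$. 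The main obstacle I anticipate is bookkeeping the role of the Reeb direction and the precise sense in which $\nabla s_k(p)$ induces a complex structure on $N_p$: one needs $\nabla_\xi s_k(p)$ to be close enough to $\C$-linear — which follows from the asymptotically holomorphic bound $|\bar\partial s_k| = O(k^{-1/2})$ together with $\varepsilon$-transversality, cf. Lemma \ref{lem:antilin} — so that the transported structure is well-defined and the ``positive $(1,1)$'' condition is open and therefore stable under the $O(k^{-1/2})$ error. Everything else is the standard dictionary between holomorphic normal forms and linearizations, handled by the implicit function theorem as in the previous sections.
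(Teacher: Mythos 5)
Your proposal follows essentially the same route as the paper: read the ``only if'' direction off the model chart, and for the ``if'' direction build a compatible chart whose first two coordinates come from the sections and whose remaining coordinates are adapted (Darboux-type) coordinates on $B$, with the positivity of $d\beta|_{N_p}$ guaranteeing compatibility of the chart. The one place where your version is shakier than the paper's is the step ``$F_k = z_2/z_1 + O(|z|)$; a further change of coordinates absorbs the higher-order terms'': if $z_1,z_2$ are only the \emph{linearizations} of $s_{k,0}\sigma, s_{k,1}\sigma$, then $F_k = (z_2+O(|z|^2))/(z_1+O(|z|^2))$, and the quadratic error in the denominator is not a bounded perturbation of $z_2/z_1$ near $B$ (indeed $B$ is then only tangent to $\{z_1=z_2=0\}$, not equal to it). The paper sidesteps this entirely by taking the trivialized sections themselves, $z_1 = s_{k,0}\cdot\sigma$ and $z_2 = s_{k,1}\cdot\sigma$, as the first two coordinate functions, so that $F_k = z_2/z_1$ holds exactly and $B=\{z_1=z_2=0\}$ on the nose; since these functions have the prescribed differential at $p$, compatibility of the chart at $p$ is unaffected. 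You should make that substitution explicit; with it, your argument closes.
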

\begin{proof}
It is readily seen that the condition is necessary since the existence of a local model provided in the Definition \ref{def:almost_pencil} implies the properties in the statement. Conversely, suppose that $B$ is a quasi--contact submanifold near $p$ and let $(z_3, \ldots, z_n,t)$ be local coordinates at $p$ such that
$$\displaystyle(d\beta)_{|B} = \frac{i}{2} \sum_{j=3}^n dz_j \wedge d\bar{z}_j\quad\mbox{ and }\quad\xi_p= \ker dt.$$
Extend the coordinate functions $(z_3,\ldots,z_n)$ ensuring that the their derivatives vanish in the normal directions of $N_p$. Locally trivialize the bundle $L^{\otimes k}$ via a non--vanishing section $\sigma$ and define functions $z_0= s_{k,0} \cdot \sigma$, $z_1= s_{k,1} \cdot \sigma$. These provide a complete set of coordinates $(z_1,\ldots, z_n,t)$ around $p$ in which the symplectic form is expressed as
$$(d\beta)_{|B} = (d\beta)_{|N_p} + \sum_{j=3}^n dz_j \wedge d\bar{z}_j.$$
Thus we obtain the required local model.\end{proof}

To achieve the local model at $B$, it remains to perturb $F_k$ such that it satisfies the hypothesis of the linear characterization. More precisely, there exists a perturbation $D_p: T_pM \longrightarrow L_p^{\otimes k}\oplus L^{\otimes k}$ of the map $\nabla s_k(p)$ conforming
$$\left|\left(\nabla s_k(p)-D_p\right)v\right| \leq ck^{-1/2}|\nabla s_k(p)(v)|,\quad \forall v\in T_pM,$$
and the requirements of the Lemma \ref{positi}. Indeed, it is simple to perturb the pair $s_k=s_{k,0}\oplus s_{k,1}$ to $\tilde{s}_k=\tilde{s}_{k,0}\otimes \tilde{s}_{k,1}$ at distance $O(k^{-1/2})$ in $C^3$--norm with $D_p$ as the linearization at $p$ of associated pencil--map $\tilde{F}_k$. This perturbation fulfills the property $(2)$ of the Definition \ref{def:almost_pencil}. The perturbation will still be referred as $F_k$.\\

\noindent {\it Step 3: Local model for the singularities.} It remains to study the map $F_k$ near the singular set $\Delta$. Let $\gamma\in\Delta$ be a smooth connected curve. The perturbation of $F_k$ will occurs in a $\delta$--neighborhood of $\Delta$, Proposition \ref{disjoint} implies that the perturbations can be independently performed in each connected component of $\Delta$. We will describe the associated quasi--contact data around the curve $\gamma$, define a general perturbation well--behaved with respect to a simple integral distribution and then prove that it can be chosen to induced with Morse model with respect to the actual quasi--contact distribution.\\

Let us specify the information contained in a trivialization. For $k$ large enough, the curve $\gamma$ is a transverse contact loop, equivalently $TM|_\gamma=T\gamma\oplus\xi$, and the angle between $T\gamma$ and
$\xi$ is bounded below by a uniform constant because of the transversality of the sequence.
To trivialize we use the geodesic flow of the metric $g_J$ associated to the fixed almost complex structure $J$ and obtain a diffeomorphism
$$\phi: U_{\rho}\longrightarrow V_{\rho} \subset S^1\times \C^n, $$
where $U_{\rho}$ is a $\rho$--neighborhood of $\gamma$, measured with the $g_k$ metric, and $V_{\rho}$ its image by the flow, which is an open neighborhood of $S^1\times \{ 0 \}$. In the neighborhood $S^1\times\C^n$ we consider the product metric with first component the image of $g_k$ through $\phi$ and second component the standard hermitian metric in $\C^n$. Suppose also that $\phi_* J_{|\gamma}=J_0$, $J_0$ being the standard complex structre of $\C^n$. In particular, the model being isometric allows us to explicitly measure in $V_\rho$.\\

\noindent Regarding the distributions, denote $\xi_k=\phi_*\xi$ and let $\xi_h$ be the integrable distribution given as $\{ p \}
\times \C^n\subset S^1\times \C^n$. Possibly after a uniform shrinking of $\rho$, the angle\footnote{The maximum angle between two subspaces $U,V \subset \R^m$ of the Euclidean space is by definition $\angle_M(U,V) = \max_{u\in U} \{\angle(u,V)\}.$} between $\xi_k$ and $\xi_h$ tends to zero; more precisely, there exists a uniform constant $C>0$ such that
\begin{equation}
\angle_M(\xi_k(s,z), \xi_h(s,z))< C|z|k^{-1/2}, ~ \forall (s,z)\in V_{\rho}
\subset S^1\times \C^n, \label{apr_dist}
\end{equation}
Hence, we are able to project orthogonally the almost complex structure $\phi_*J$ on $\xi_k$ to an almost complex structure $J_h$ in the distribution $\xi_h$. Let $\mu:\bigwedge^{(1,0)}_{J_0}\longrightarrow\bigwedge^{(0,1)}_{J_0}$ be the defining function of $J_h$ with respecto to $J_0$ as almost complex structures in $\xi_h$, cf. \cite{Do96}. Denote by $\partial$ and $\partial_0$ the associated holomorphic parts of the covariant derivative with respect to $J_h$ and $J_0$. The holomorphic and antiholomorphic parts of the operator corresponding to $\phi_*J$ in $\xi_k$ will be denoted $\partial_k$ and $\bar\partial_k$. In particular
\begin{equation}
\partial=\partial_0+\bar{\mu}\bar{\partial}_0, ~~ \bar{\partial}=\bar{\partial}_0+
\mu\partial_0. \label{apr_cplx}
\end{equation}
It follows that $|\mu(z)|\leq C|z|k^{-1/2}$, $C$ being a uniform
constant.\\

Let us define the perturbation of $F_k$, as mentioned above we will construct the perturbation in the isometric neighborhood $S^1\times\C^n$. The context being clear, we will still denote by $F_k$ the pull--back $(\phi^{-1})^*F_k$. Since the local model required from \ref{def:almost_pencil} is quadratic, we will deform $F_k$ to approximately its complex Hessian $H=\frac{1}{2}\nabla_\partial(\partial F_k)$. Locally, it is expressed as
$$ H(s,z)=\sum H_{\alpha \beta}(s) z_{\alpha}z_{\beta}.$$
Consider a cut--off function $\beta_{\rho}: S^1\times\C^n\longrightarrow[0,1]$ satisfying
\begin{enumerate}
\item[-] $\beta_{\rho}(\phi(p))=1$ if $d_k(p,\gamma)\leq\rho/2$, and $\beta_{\rho}(\phi_(p))=0$ if $d_k(p, \gamma)\geq \rho$.
\item[-] $|\nabla \beta_{\rho}|=O(\rho^{-1})$.
\end{enumerate}
The second condition can be ensured due to the choice of metrics. The constant $\rho<\rho_0$ will be shrunk in a uniform way, and so we consider it fixed assuring that the conditions are satisfied. The perturbation of $F_k$ will be of the form
$$\widetilde{F}_k(s,z)=\beta_{\rho}(w(s)+H(s,z))+(1-\beta_{\rho})F_k(s,z), $$
where $w:S^1\longrightarrow\C$ is any smooth function. The only further issue is the verification of $\Gamma=\Delta$ for the perturbed $\widetilde{F}_k(s,z)$, this is the content of the following\\

\begin{lemma}
With the above notations, let $\rho>0$ and $|w(s)-F_k(s,0)|$ be small enough and $|\dot{w}(s)|=O(1)$. Then for sufficiently large $k$ the inequality
$|\partial_k\widetilde{F}_k|\leq |\bar{\partial}_k \widetilde{F}_k|$ is only satisfied in $\gamma$.
\end{lemma}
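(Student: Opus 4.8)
The plan is to argue that on the perturbed map $\widetilde{F}_k$, the failure of holomorphic dominance $|\partial_k\widetilde{F}_k|\le|\bar\partial_k\widetilde{F}_k|$ forces us to be on the zero locus of the $J_0$-linear part, which by the quadratic normal form is exactly $\gamma$. First I would split the neighborhood $V_\rho$ into the inner core $\{d_k(p,\gamma)\le\rho/2\}$, where $\beta_\rho\equiv1$ and $\widetilde{F}_k(s,z)=w(s)+H(s,z)$, the outer shell $\{d_k(p,\gamma)\ge\rho\}$, where $\widetilde{F}_k=F_k$ and Lemma \ref{far} together with Proposition \ref{disjoint} already give $\Gamma=\Delta$ (so the inequality fails off $\gamma$), and the transition annulus $\{\rho/2\le d_k(p,\gamma)\le\rho\}$, where the cut-off and its gradient must be controlled.

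In the inner core I would compute $\partial_0\widetilde{F}_k$ and $\bar\partial_0\widetilde{F}_k$ of $w(s)+H(s,z)=w(s)+\sum H_{\alpha\beta}(s)z_\alpha z_\beta$: the $(1,0)$-part along $\C^n$ is the linear form $z\mapsto 2\sum H_{\alpha\beta}(s)z_\beta\,dz_\alpha$, and since $\partial F_k$ was $\varepsilon'$-transverse to zero along $\xi$ (Proposition \ref{propo:nice}(iii)), its complex Hessian $H(s)$ is $\varepsilon'$-transverse as a $\C$-linear map, hence invertible with $|H(s)z|\ge \varepsilon'|z|$ on a complementary subspace — in fact for $k$ large, the transversality forces $|\partial_0\widetilde{F}_k(s,z)|\gtrsim\varepsilon'|z|$ (up to the $O(k^{-1/2})$ corrections from $\mu$ and from $\dot w(s)$ contributing to the $ds$-component). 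Meanwhile $\bar\partial_k\widetilde{F}_k$ is purely an error term: by \eqref{apr_cplx}, $\bar\partial_k=\bar\partial_0+\mu\partial_0$, and $\bar\partial_0(w+H)=0$ exactly (the function is holomorphic in $z$ and the $s$-derivative of $w$ is $O(1)$ but sits in the base $S^1$-direction which the asymptotically holomorphic/contact setup handles as before), so $|\bar\partial_k\widetilde{F}_k|\le C|z|k^{-1/2}|\partial_0\widetilde{F}_k|+O(k^{-1/2})$. Comparing, for $k$ large the inequality $|\partial_k\widetilde{F}_k|\le|\bar\partial_k\widetilde{F}_k|$ can only hold where the dominant term $|\partial_0\widetilde{F}_k|$ itself is forced small, i.e. where $|z|$ is of order $k^{-1/2}$; but the $\varepsilon'$-transversality of $H$ pushes this down to $z=0$, which is $\gamma$. (One checks $\gamma$ itself does lie in $\Gamma$: there $\partial_k\widetilde{F}_k=0$ since the linear-in-$z$ part vanishes at $z=0$.)

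The transition annulus is the delicate region. There $\widetilde{F}_k=\beta_\rho(w+H)+(1-\beta_\rho)F_k$, and differentiating produces the cross term $\nabla\beta_\rho\cdot\big((w+H)-F_k\big)$ with $|\nabla\beta_\rho|=O(\rho^{-1})$. The hypotheses $|w(s)-F_k(s,0)|$ small and $|\dot w|=O(1)$, plus $H$ being (a $C^3$-close approximation to) the Hessian of $F_k$ so that $|(w+H)-F_k|=o(\rho)+O(\rho^3)$ on the annulus $|z|\sim\rho$, are exactly what makes this cross term negligible compared to the main lower bound $|\partial_0 F_k|\sim\varepsilon'\rho$ coming again from transversality of $\partial F_k$ away from $\gamma$ (using that on $\rho/2\le|z|\le\rho$ we are bounded away from $\Delta$, so by Proposition \ref{disjoint} we are off $\Gamma$ for the unperturbed $F_k$, and the perturbation is too small to reintroduce a solution). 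I would fix $\rho<\rho_0$ small enough to absorb the $O(\rho^3)$ and $o(\rho)$ terms, then $k$ large enough to absorb the $O(k^{-1/2})$ antiholomorphic and $\mu$-twisting errors.

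\textbf{Main obstacle.} The hard part is the bookkeeping in the transition annulus: one must simultaneously control (i) the cut-off derivative term $O(\rho^{-1})|(w+H)-F_k|$, which needs the quadratic approximation $H\approx\operatorname{Hess}F_k$ to be good to order $o(\rho^2)$ pointwise and the Taylor remainder of $F_k$ to be genuinely cubic, (ii) the twisting of the complex structure, where $\partial_k,\bar\partial_k$ differ from $\partial_0,\bar\partial_0$ by $\mu$ with $|\mu(z)|\le C|z|k^{-1/2}$ per \eqref{apr_cplx}, so that $|\bar\partial_k\widetilde{F}_k|$ is not literally zero but an $O(|z|k^{-1/2})$ multiple of $|\partial_0\widetilde{F}_k|$, and (iii) the $\dot w(s)$ contribution in the $S^1$-direction, which is $O(1)$ and so must be beaten by the $\C^n$-directional lower bound $\varepsilon'|z|$ — forcing the conclusion only for $|z|$ bounded below, which is fine since we are in the annulus but requires care near its inner edge where it must mesh with the inner-core estimate. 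Choosing the order of quantifiers — first $\rho$, then $k=k(\rho)$ — and verifying the estimates patch consistently across $|z|=\rho/2$ and $|z|=\rho$ is the real content; everything else is the linear-algebra consequence of the transversality already supplied by Proposition \ref{propo:nice} and the normal-form structure of Definition \ref{def:almost_pencil}.
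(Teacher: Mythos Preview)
Your plan is essentially the paper's own proof: split into the inner core where $\beta_\rho\equiv 1$ and the transition annulus, use the $\varepsilon'$--transversality of $\partial F_k$ to get a lower bound $|\partial_k(w+H)|\gtrsim \eta|z|$ in the core, bound $|\bar\partial_k(w+H)|$ via the $\mu$--twisting and the angle estimate \eqref{apr_dist}, and in the annulus control the cut--off cross term $\nabla\beta_\rho\cdot((w+H)-F_k)$ using the Taylor remainder and the smallness hypothesis on $|w(s)-F_k(s,0)|$. The order of quantifiers you identify (first $\rho$, then $k$) is exactly right.

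One point to tighten in the inner--core estimate: your stated bound $|\bar\partial_k\widetilde F_k|\le C|z|k^{-1/2}|\partial_0\widetilde F_k|+O(k^{-1/2})$ should not carry an absolute $O(k^{-1/2})$ term---the $\dot w(s)$ contribution enters only through the angle between $\xi_h$ and $\xi_k$, which by \eqref{apr_dist} is itself $O(|z|k^{-1/2})$, so the correction is $O(|z|k^{-1/2})$ and both $|\partial_k\widetilde F_k|$ and $|\bar\partial_k\widetilde F_k|$ are homogeneous of order $|z|$. The conclusion $z=0$ then follows by dividing through by $|z|$ (yielding $\eta-Ck^{-1/2}\le C'k^{-1/2}$, impossible for $k$ large), not by an additional appeal to transversality ``pushing down'' from $|z|=O(k^{-1/2})$; as written, that last step is not a valid inference. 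This is exactly how the paper closes the core case.
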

\begin{proof}
There are two different scenarios, close to the curve where $\beta_{\rho}\equiv1$ and the transition area where $\nabla\beta_p$ does not vanish. Let us first consider the former. Then the perturbation reads $\widetilde{F}_k=w+H$ and so $ \partial \widetilde{F}_k=\partial H$, $\bar{\partial}\widetilde{F}_k=\bar{\partial} H$. The $\eta$--transversality of $\partial F_k$ yields the following bound
$$ |\partial H(s,z)| \geq \eta |z| - |\bar{\partial}(\partial F)_{(z=0)}|z|. $$
Since $\bar{\partial} \partial+ \partial \bar{\partial}\equiv0$ on functions
and the norm $\partial \bar{\partial} F$ is controlled, as $F$ has uniform
$C^3$--bounds, we obtain
$$ |\partial H(s,z)| \geq \eta |z| - Ck^{-1/2}|z|.$$
We need to relate this to the distribution $\xi_k$. By hypothesis $|\dot{w}(s)|=O(1)$ and we also know $|\partial s H_{\alpha \beta}(s)|=O(1)$ due to the $C^3$--bounds of $F_k$. Then the angle inequality (\ref{apr_dist}) implies
\begin{equation}
|\partial_k H(s,z)| \geq \eta |z| - Ck^{-1/2}|z|, \label{upper}
\end{equation}
where $C>0$ is another suitable uniform constant; the uniform constants appearing on the bounds will deliberately be referred as $C$. The asymptotically holomorphic bounds also imply that $|\bar{\partial} H| \leq C|z|^2k^{-1/2}$ and we analogously deduce
\begin{equation}
|\bar{\partial}_k H| \leq C(|z|^2k^{-1/2}+|z|k^{-1/2}). \label{lower}
\end{equation}
The condition $|\partial_k H|\leq |\bar{\partial}_k H|$ along with (\ref{upper}) and (\ref{lower}) implies $z=0$ for $k$ large enough, concluding the statement in this case.\\

\noindent We focus on the latter situation, i.e. the behaviour of $\Gamma$ around points in the annulus containing the support of $\nabla \beta_{\rho}$. The antiholomorphic derivative of the perturbation reads
$$\bar{\partial} \widetilde{F}_k =\bar{\partial} \beta_{\rho}(w+H-F_k)+\beta_{\rho}\bar{\partial}
H+(1-\beta_{\rho})\bar{\partial} F_k. $$
As before this concerns $\xi_h$ and we may bound the norm $|\bar{\partial} f_0|$ as in \cite{Do99} . Again the hypothesis $|\dot{w}(s)|=O(1)$ and the asymptotically holomorphic estimate $|\bar{\partial}_k F_k|=O(k^{-1/2})$ combine with the angle inequality (\ref{apr_dist}) to conclude
$$ |\bar{\partial}_k \widetilde{F}_k| \leq C(\rho^2+k^{-1/2}+|\widetilde{F}_k(s,0)-w|\rho^{-1}). $$
The direct computation $\partial_k \widetilde{F}_k =\partial_k \beta_{\rho}(w+H-F_k)+\beta_{\rho}\partial_k H
+(1-\beta_{\rho})\partial_k F_k$ and the transversality of $F_k$ yield a lower bound for $|\partial_k \widetilde{F}_k|$. The
argument follows as in \cite{Do99} until
$$ |\partial_k \widetilde{F}_k|-|\bar{\partial}_k \widetilde{F}_k| \geq \frac{\eta \rho}{2}-C(\rho^2+k^{-1/2}
+|w-F_k(s,0)|\rho^{-1}).$$
By the hypothesis $|w-F_k(s,0)|$ is small enough, and once fixed a sufficiently small $\rho$, for $k$ large enough the inequality is strictly positive over the annulus. This concludes the statement in the second case.
\end{proof}
Note that $w$ can be chosen generic enough to ensure that the projection of the critical points is a family of immersed curves. Also, the perturbation satisfies the local model around the curves because a real generic $S^1$--family of non--degenerate quadratic forms can be diagonalised. This proves the existence of quasi--contact pencils. The statement concerning the Poincar\'e dual of the fibers follows from the fact that first Chern class of the normal bundle to the section is the Poincar\'e dual of its vanishing locus.

%

\end{document}